\newlist{enumeratesub}{enumerate}{3}
\setlist[enumeratesub]{label=(\alph*),ref=(Br\arabic{enumi}\alph*)}
\newlist{enumeratesubb}{enumerate}{3}
\setlist[enumeratesubb]{label=(\alph*),ref=(Br\arabic{enumi}\alph*)}
\numberwithin{equation}{section}
\tikzset{anchorbase/.style={baseline={([yshift=-0.5ex]current bounding box.center)}},bullet/.style={circle,fill,inner sep=2pt}}
\DeclareSymbolFont{usualmathcal}{OMS}{cmsy}{m}{n}
\DeclareSymbolFontAlphabet{\mathucal}{usualmathcal}
\theoremstyle{plain}
\newtheorem{theorem}{Theorem}[section]
\newtheorem*{theorem*}{Theorem}
\newtheorem{lemma}[theorem]{Lemma}
\newtheorem{proposition}[theorem]{Proposition}
\newtheorem{corollary}[theorem]{Corollary}
\newtheorem{conjecture}[theorem]{Conjecture}
\newtheorem{assumption}[theorem]{Assumption}
\theoremstyle{definition}
\newtheorem{definition}[theorem]{Definition}
\theoremstyle{example}
\theoremstyle{remark}
\newtheorem{remark}[theorem]{Remark}
\newcommand{\twomatrix}[4]{\ensuremath\begin{pmatrix} #1 & #2 \\ #3 & #4 \end{pmatrix}}
\newcommand{\N}{\mathbb{N}}
\newcommand{\Z}{\mathbb{Z}}
\newcommand{\C}{\mathbb{C}}
\newcommand{\gl}{\mathfrak{gl}}
\newcommand{\catO}{\mathcal{O}}
\newcommand{\suchthat}{\mid} 
\newcommand{\mapto}{\rightarrow}
\DeclareMathOperator{\Hom}{Hom}
\DeclareMathOperator{\End}{End}
\DeclareMathOperator{\image}{Im}
\newcommand{\id}{\mathrm{id}}
\newcommand{\abs}[1]{\left|#1\right|}
\renewcommand{\epsilon}{\varepsilon}
\renewcommand{\phi}{\varphi}
\newcommand{\up}{{\mathord\wedge}}
\newcommand{\down}{{\mathord\vee}}
\newcommand{\calC}{{\mathcal{C}}}
\newcommand{\calY}{{\mathcal{Y}}}
\newcommand{\sfs}{{\mathsf{s}}}
\newcommand{\sfw}{{\mathsf{w}}}
\newcommand{\frakh}{{\mathfrak{h}}}
\newcommand{\frakn}{{\mathfrak{n}}}
\newcommand{\frakp}{{\mathfrak{p}}}
\newcommand{\bbS}{{\mathbb{S}}}
\newcommand{\rmi}{{\mathrm{i}}}
\newcommand{\rmI}{{\mathrm{I}}}
\renewcommand{\down}{{\downarrow}}
\renewcommand{\up}{{\uparrow}}
\newcommand{\cycl}{{\mathrm{cycl}}}
\newcommand{\bolda}{{\boldsymbol{\mathrm{a}}}}
\newcommand{\boldi}{{\boldsymbol{\mathrm{i}}}}
\newcommand{\idem}{\boldsymbol{1}}
\newcommand{\idempotente}{{\boldsymbol{\mathrm{f}}}}
\newcommand{\bomega}{{\boldsymbol{\omega}}}
\newcommand{\VB}{\mathsf{V\hspace{-5.5pt}VBr}}
\newcommand{\Br}{{\mathrm{Br}}}
\newcommand{\uBr}{\Br}
\newcommand{\uVB}{\VB}
\newcommand{\udelta}{{\underline \delta}}
\newcommand{\hats}{{\hat s}}
\newcommand{\hate}{{\hat e}}
\newcommand{\ts}{{\sigma}}
\newcommand{\te}{{\tau}}
\newcommand{\thats}{{{\hat \sigma}}}
\newcommand{\thate}{{{\hat \tau}}}
\newcommand{\Seq}{{\mathrm{Seq}}}
\title[Walled Brauer algebras]{Walled Brauer algebras as idempotent truncations of level \(2\) cyclotomic quotients}
 \author{Antonio Sartori}
 \address{A.S.: Mathematisches Institut\\Universit\"at Freiburg\\Eckerstr.\ 1\\79104 Freiburg, Germany}
 \email{antonio.sartori@math.uni-freiburg.de}
 \author{Catharina Stroppel}
 \address{C.S.: Mathematisches Institut\\Universit\"at Bonn\\Endenicher Allee 60\\53115 Bonn, Germany}
 \email{stroppel@math.uni-bonn.de}
\thanks{A.S.\ was supported by the EPSRC grant EP/I014071. C.S.\ was supported by the Max-Planck Institute for Mathematics in Bonn when writing up this paper. She is very grateful for this exceptional possibibilty to work in this environment.}
\keywords{(Degenerate affine) Walled Brauer algebra, Category \(\catO\)}
\begin{document}




\begin{abstract}
We realize (via an explicit isomorphism) the walled Brauer algebra for an arbitrary integral parameter \(\delta\)  as an idempotent truncation of a level two cyclotomic degenerate affine walled Brauer algebra.
The latter arises naturally in Lie theory as the endomorphism ring of so-called mixed tensor products, i.e.\ of a parabolic Verma module tensored with some copies of the natural representation and its dual.  This provides us a method to construct central elements in the walled Brauer algebras and can be applied to establish the Koszulity of the walled Brauer algebra if \(\delta\not=0\). 
\end{abstract}



\maketitle
\tableofcontents

\section{Introduction}
\label{sec:introduction}

Let \(\delta \in \C\) be a fixed parameter.
The {\em walled Brauer algebra} \(B_{r,s}(\delta)\) is
a subalgebra of the classical
Brauer algebra \(B_{r+s}(\delta)\). This algebra was introduced independently by Turaev \cite{T} and Koike \cite{K}
in the late 1980s
motivated in part by a Schur-Weyl duality between
\(B_{r,s}(m)\) and the general linear group \(GL_m(\C)\)
arising from
mutually commuting actions on the ``mixed'' tensor space
\(V^{\otimes r} \otimes W^{\otimes s}\), where \(V\)
is the natural representation of \(GL_m(\C)\) and \(W := V^*\);
see also \cite{Beetal}. 

If \(\delta \notin \Z\) then the algebra \(B_{r,s}(\delta)\) is semisimple,
and its representation theory
can be described using character-theoretic methods
analogous to the ones used in the study of the complex representation theory
of the symmetric group; see e.g. \cite{Kosuda}, \cite{Ha}, \cite{N}. In case \(\delta\in\Z\) the algebra is in general not semisimple, but it was shown in \cite[Theorem 7.8]{MR2955190} that it again can be realized as the endomorphism ring of mixed tensor space, but now for the general linear Lie {\it superalgebra} \(\mathfrak{gl}(m|n)\)  for large enough integers \(m, n\), where \(\delta=m-n\) is the superdimension of the natural representation.    

As in the Okounkov-Vershik approach to the representation theory of the symmetric groups, \cite{CST}, the Jucys-Murphy elements play an important role and can be used to lift representations to the associated degenerate affine Hecke algebra. 
The \emph{degenerate affine walled Brauer algebra} was introduced independently in \cite{MR3244645} and \cite{RuiSu} playing the analogous role for the walled Brauer algebra as the degenerate affine Hecke algebra for the symmetric group. This algebra and its cyclotomic quotients were studied in \cite{MR3244645}, \cite{RuiSu}, \cite{Betal}. In this paper we consider a special case of a level \(2\) cyclotomic quotient
 \(\uVB_{r,t}(\boldsymbol \omega;\beta^\up_1,\beta^\up_2;\beta_1^\down,\beta_2^\down)\), see Definition \ref{def:4} and prove the main theorem:
  
\begin{theorem}
The walled Brauer algebra \(B_{r,s}(\delta)\) is isomorphic to an idempotent truncation \(\idempotente\uVB_{r,t}(\boldsymbol \omega;
  \beta^\up_1,\beta^\up_2;\beta_1^\down,\beta_2^\down)\idempotente\) of our chosen cyclotomic quotient. 
\end{theorem}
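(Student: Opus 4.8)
The plan is to build the isomorphism by hand, comparing the two algebras through their presentations. Recall that $B_{r,s}(\delta)$ is generated by the Coxeter-type generators $\brs_1,\dots,\brs_{r-1}$ and $\brs_{r+1},\dots,\brs_{r+s-1}$, permuting the strands on either side of the wall, together with a single wall-crossing generator $\bre$, subject to the braid relations, the relation $\bre^2=\delta\bre$, and the remaining walled-Brauer relations; whereas $\uVB_{r,t}(\bomega;\beta^\up_1,\beta^\up_2;\beta^\down_1,\beta^\down_2)$ is the degenerate affine walled Brauer algebra --- the same Brauer generators together with the commuting polynomial generators $x_i$ --- modulo the degree-two cyclotomic relation and the admissibility constraints on $(\bomega;\beta^\up_\bullet;\beta^\down_\bullet)$ recorded in Definition~\ref{def:4}. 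The first step is to pin down the truncating idempotent: one takes $\idempotente=\sum_{\boldi}e(\boldi)$, the sum of the simultaneous spectral projections $e(\boldi)$ of the commuting family $\{x_i\}$ over a distinguished set of \emph{admissible residue sequences} $\boldi$, these being exactly the sequences that should occur as joint eigenvalues of the Jucys--Murphy elements of $B_{r,s}(\delta)$ under the sought isomorphism. This choice makes the polynomial generators act, after truncation, by a controlled finite list of scalars determined by $\beta^\up_\bullet$ and $\beta^\down_\bullet$.

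Next I would define $\Phi\colon B_{r,s}(\delta)\to\idempotente\,\uVB_{r,t}\,\idempotente$ on generators by $\brs_i\mapsto\idempotente\brs_i\idempotente$ and $\bre\mapsto c\,\idempotente\bre\idempotente$ for a suitable normalising scalar $c$, and verify that $\Phi$ respects the defining relations of $B_{r,s}(\delta)$. The braid and commutation relations come for free because the $\brs_i$ already satisfy them inside $\uVB_{r,t}$; the substance is in the relations involving $\bre$, since the affine walled Brauer relations produce correction terms built from the $x_i$. The crucial point --- and the reason both the admissibility conditions of Definition~\ref{def:4} and the precise index set for $\idempotente$ are forced to be what they are --- is that those correction terms either vanish or collapse to the scalar $\delta$ once one restricts to the admissible residue sequences, so that $\Phi(\bre)^2=\delta\,\Phi(\bre)$ and the unoriented tangle relations hold exactly.

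The step I expect to be the main obstacle is surjectivity of $\Phi$: one has to show that each $x_i$, once sandwiched by $\idempotente$, already lies in the subalgebra generated by the images of the Brauer generators --- in other words, that the affine generators become redundant after truncation. I would argue by induction on $i$, using the intertwining relations among $x_i$, $\brs_i$ and $\bre$ together with a combinatorial classification of which residue sequences can actually occur on $\idempotente\,\uVB_{r,t}$. It is exactly here that the hypothesis of \emph{level two} is indispensable: a higher cyclotomic degree would leave genuine polynomial degrees of freedom which no longer reduce to the Brauer part. Injectivity I would then obtain by a dimension count: $\uVB_{r,t}$ carries a cellular basis from the works cited in the introduction, from which $\dim e(\boldi)\,\uVB_{r,t}\,e(\boldi')$ can be read off, and summing over $\boldi,\boldi'$ in the chosen index set gives $\dim\idempotente\,\uVB_{r,t}\,\idempotente$; one must check that this equals $\dim B_{r,s}(\delta)=(r+s)!$, the number of walled Brauer diagrams. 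A surjection between finite-dimensional vector spaces of equal dimension is an isomorphism, which completes the proof.

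Conceptually, the reason the statement holds is that both sides are endomorphism rings of mixed tensor products --- $\uVB_{r,t}(\bomega;\beta^\up_\bullet;\beta^\down_\bullet)$ of a parabolic Verma module tensored with copies of the natural representation and its dual, and, by \cite[Theorem~7.8]{MR2955190}, $B_{r,s}(\delta)$ of the analogous $\gl(m|n)$-mixed tensor space with $\delta=m-n$ --- while the two allowed values of $\beta^\up$ and of $\beta^\down$ correspond to the two blocks of the Levi of the relevant parabolic, and the idempotent $\idempotente$ implements the comparison between the two models. One could alternatively derive the theorem from such an identification of endomorphism rings together with the pertinent double-centraliser properties; the presentation-level argument sketched above, however, is self-contained and has the merit of exhibiting the isomorphism concretely.
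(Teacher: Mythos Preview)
Your overall architecture---define $\Phi$ on generators, verify relations, prove surjectivity by showing the $x_i\idempotente$ lie in the image, then count dimensions---matches the paper. The dimension count in the paper is done via category $\catO$ (Corollary~\ref{lem:7} and Lemma~\ref{lem:8}) rather than via a cellular basis, but that is a matter of taste.

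The genuine gap is in your proposed formula for $\Phi$. Sending $\brs_i\mapsto\idempotente\brs_i\idempotente$ and $\bre\mapsto c\,\idempotente\bre\idempotente$ for a \emph{scalar} $c$ cannot work, and the paper stresses this explicitly (see the introduction and Remark~\ref{theremark}). Inside $\uVB^{\cycl}_{r,t}$ with the parameters~\eqref{eq:56} one has $e_k^2=\omega_0 e_k=(m+n)e_k$, so your $\Phi(\bre)^2$ would be $c(m+n)\Phi(\bre)$; matching this to $\delta\Phi(\bre)$ forces $c=\delta/(m+n)$. But the tangle relation $e_k e_{k+1} e_k=e_k$ then gives $\Phi(\bre_k)\Phi(\bre_{k+1})\Phi(\bre_k)=c^2\Phi(\bre_k)$, forcing $c^2=1$. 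These two constraints are incompatible unless $m+n=\pm\delta$, which is excluded by Assumption~\ref{ass:1}. So no scalar renormalisation exists, and the claim that ``correction terms collapse to the scalar $\delta$'' is exactly what fails.

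What the paper does instead is conjugate by an \emph{operator-valued} element $Q_k=\sqrt{b_{k+1}/b_k}\,\idempotente$, where $b_k$ is a degree-one polynomial in $y_k$; see~\eqref{eq:105}--\eqref{eq:111}. The images $\sigma_k=-Q_ks_kQ_k+\tfrac{1}{b_k}\idempotente$ and $\tau_k=Q_ke_kQ_k$ are genuinely new elements, and the relation $\tau_k^2=-\delta\tau_k$ (Lemma~\ref{lem:20}) comes from the identity $e_k\tfrac{1}{b_k}e_k\idempotente=e_k\idempotente$ (Lemma~\ref{lem:30}), which is the technical heart of the argument. A consequence is that the braid relations do \emph{not} come for free: Lemmas~\ref{lem:24} and~\ref{lem:35} are nontrivial computations. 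Your instinct that the affine generators become redundant after truncation is correct and is exactly Lemma~\ref{lem:6}, but it only becomes usable once one has the right $\Phi$.
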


We want to stress that the theorem gives in fact an \emph{explicit} isomorphism, \eqref{eq:3}, between the idempotent truncation and the walled Brauer algebra. This is a nontrivial fact, since the idempotent truncation creates an interesting change in the parameters; the parameter \(\omega_0\) of the degenerate walled Brauer algebra does not coincide with the parameter \(\delta\) of the walled Brauer algebra.  In particular, the isomorphism does \emph{not} send standard generators to standard generators or to zero, but to a quite nontrivial expression involving inverses of square roots of formal power series. The paper contains therefore in Section~\ref{sec:invers-square-roots} a small general treatment about square root and inverses which we believe is of independent interest.  They allow us to make sense of the expressions defined in \eqref{eq:39} which then appear in the main isomorphism theorem. The main inspiration on the way of finding these formulas came from the impressive paper \cite{MR2235339} which allowed us to compare the Young orthogonal forms of the two algebras in the special cases when they are both semisimple.

As an application we construct, based on the arguments in \cite{MR3192543}, elements in the center of the walled Brauer algebra in terms of polynomials satisfying the Q-cancellation property. We conjecture that these elements generate the center. The result is slightly surprising, since the center is not generated (as for instance for the group algebra of the symmetric group) by the symmetric polynomials in the Jucys-Murphy elements as conjectured in \cite{MR2955190}, see Remark \ref{rem:2}. It would be interesting to realize this center as a cohomology ring of some variety in analogy to e.g. \cite{Bcenter}, \cite{StSpringer}, \cite{BLPW}. 

The main difficulty in the proof is to make sure that the proposed isomorphism is well-defined; see Section~\ref{sec:proofs}. This requires to verify the compatibility with the defining relations of the walled Brauer algebra which is done in several separate lemmas. Here one might prefer to  have a more economic presentation of the walled Brauer algebra (or walled Brauer category). Although a more elegant presentation can be found in \cite{Betal} we stick here to the more classical presentation which is far from being minimal. Despite the fact that we have to check a longer list of relations, each of them appears to be rather straight-forward, as soon as we have set up the correct framework in Section~\ref{sec:invers-square-roots}. Passing to fewer relations amounts to substantially more difficult proofs and less routine arguments for each of them. Moreover our arguments generalize directly to the Brauer algebra case, \cite{ESBrauer},  and we believe also to the quantized walled Brauer algebras from \cite{DDS} and to similar other examples of diagram algebras or other algebras of topological origin.  

The pure existence of an isomorphism as in the main theorem can be deduced from the results in \cite{MR2955190}, but requires a nontrivial passage between the walled Brauer algebras, the representation theory of the general linear super group and finally the generalized Khovanov arc algebras.  In this way it is impossible to make  the isomorphism explicit. 

By abstract nonsense our main theorem implies, see Remark  \ref{grading}, that the walled Brauer algebra can be equipped with a grading which can be realized in terms of the generalized Khovanov algebras from \cite{MR2955190}, but an explicit graded presentation expressed in the original standard generators is far from being obvious and will be dealt with in a separate paper.\\

{\bf Conventions.} In the following  we fix as ground field the complex numbers \(\C\). By an algebra we always mean an associative unitary algebra over \(\C\). Note that every finite-dimensional algebra comes with a unique maximal set of pairwise orthogonal primitive idempotents. There is then an easy correspondence between finite-dimensional algebras and \(\C\)--linear categories with a finite number of objects: if the algebra \(A\) has the set of pairwise orthogonal idempotents
\(\idem_1,\ldots,\idem_N\), then it is natural to identify \(A\) with a \(\C\)--linear category \(\calC\) with \(N\) objects, also denoted by \(\idem_1,\ldots,\idem_N\), and with homomorphism spaces
\begin{equation}
  \label{eq:76}
\Hom_{\calC} (\idem_j, \idem_\ell) =   \idem_j A \idem_\ell .
\end{equation}
In the following we will not distinguish between algebras and the corresponding categories. 
For the whole paper we fix natural numbers \(r,t \in \N\) and \(\delta \in \C\).\\

{\bf  Acknowledgment.} We like to thank Jonathan Brundan and Michael Ehrig for helpful discussions.

\section{The degenerate affine walled Brauer category}
\label{sec:degen-affine-wall}

We start by recalling the definition of the degenerate affine walled Brauer algebra. 

Denote by \(\bbS_n\) the symmetric group of permutations of \(n\) elements with its simple transpositions \(\sfs_k=(k,k+1)\) for \(k=1,\ldots,n-1\) as generators.
By an \((r,t)\)--sequence \(\bolda=(a_1,\ldots,a_{r+t})\) we mean a permutation of the sequence
    \begin{equation}
      \label{eq:1}
      (\underbrace{\up,\dotsc,\up}_{r},\underbrace{\down,\dotsc,\down}_{t}).
    \end{equation}
Let \(\Seq_{r,t}\) be the set of \((r,t)\)--sequences and \(J=\{1,\dots, r+t-1\}\).

\begin{definition}\label{def:7}
 The \emph{walled Brauer algebra} \(\uBr_{r,t}(\delta)\) is the \(\C\)--algebra on generators
    \begin{equation}
      \label{eq:339}
      \begin{aligned}
       & \idem_\bolda && \qquad \text{for all \(k\in J\) and \(\bolda\in\Seq_{r,t}\)},\\
      &  s_{k}\idem_\bolda& & \qquad \text{for all \(k\in J\) and \(\bolda\in\Seq_{r,t}\) such that } a_k=a_{k+1},\\ 
      &  \hat s_k\idem_\bolda, e_{k}\idem_\bolda, \hat e_k \idem_\bolda & & \qquad \text{for all \(k\in J\) and \(\bolda\in \Seq_{r,t}\) such that } a_k\neq a_{k+1},
      \end{aligned}
    \end{equation}
subject to the following relations (where we use \(\dot s_k,
    \dot e_k\) to denote both \(s_k, \hat s_k\) and \(e_k, \hat e_k\)
    respectively; the relations are assumed to hold for all possible
    choices that make sense with the convention that expressions like \(s_{k} \idem_\bolda\) are zero if $\bolda$ is not as in \eqref{eq:339}):
    \begin{enumerate}[label=(Br\arabic*)]
      \setlength{\itemsep}{1ex}
    \item \label{item:26} \(\{\idem_\bolda \suchthat \bolda \in \Seq_{r,t}\}\) is a complete set of pairwise orthogonal idempotents (in particular \(\sum_{\bolda \in \Seq_{r,t}} \idem_\bolda = 1\)),
    \item \label{item:27}
      \begin{enumeratesub}
      \item \label{item:3} \((s_k \idem_\bolda)\idem_\bolda = s_k \idem_\bolda\),  \((\hat s_k \idem_\bolda)\idem_\bolda = \hat s_k \idem_\bolda\),  \((e_k \idem_\bolda)\idem_\bolda = e_k \idem_\bolda\) and  \((\hat e_k \idem_\bolda)\idem_\bolda = \hat e_k \idem_\bolda\),
      \item \label{item:28} \(s_k \idem_\bolda = \idem_\bolda s_k \idem_\bolda\) and  \(e_k \idem_\bolda = \idem_\bolda e_k \idem_\bolda\),
      \item \label{item:29} \(\hat s_k \idem_\bolda = \idem_{\sfs_k \bolda} \hat s_k \idem_{\bolda}\) and \(\hat e_k \idem_\bolda = \idem_{\sfs_k \bolda} \hat e_k \idem_{\bolda}\),
      \end{enumeratesub}
  \item \label{item:br:1} \(s_k^2\idem_\bolda+\hat s^2_k \idem_\bolda= \idem_\bolda\),
    \item \label{item:br:2}
      \begin{enumeratesub}[]
      \item \label{item:br:3} \(\dot s_k \dot s_j=\dot s_j \dot s_k\) for
        \(\abs{k-j}>1\),
      \item \label{item:br:4} \(\dot s_k \dot s_{k+1} \dot s_k = \dot
        s_{k+1} \dot s_{k} \dot s_{k+1}\),
      \end{enumeratesub}
    \item \label{item:br:6}\(e_{k}^2= \delta e_{k} \),
      \item \label{item:br:8}
        \begin{enumeratesub}
        \item \label{item:br:9}\(\dot s_k \dot e_j=\dot e_j \dot s_k\) and
          \(\dot e_k \dot e_j = \dot e_j \dot e_k\) for \(\abs{k-j}>1\),
        \item \label{item:br:13}\(\hat{s}_k \dot e_k = \dot e_k = \dot e_k \hat{s}_k\),
        \item \label{item:br:14}\(\dot s_k \dot e_{k+1} \dot e_k = \dot
          s_{k+1} \dot e_k\) and \(\dot e_k \dot e_{k+1} \dot s_k =
          \dot e_k \dot s_{k+1}\), and
\(\dot e_{k+1} \dot e_k \dot s_{k+1} =
          \dot e_{k+1} \dot s_k\) and \(\dot s_{k+1} \dot e_k \dot
          e_{k+1} = \dot s_k \dot e_{k+1}\),
        \item \label{item:br:16}\(\dot e_{k+1} \dot e_k \dot e_{k+1} =
          \dot e_{k+1} \) and \(\dot e_k \dot e_{k+1} \dot e_k = \dot
          e_k\).
        \end{enumeratesub}
    \end{enumerate}
     Starting from \ref{item:br:1} we used here the abbreviation  \(s_k = \sum_{\bolda \in \Seq_{r,t}} s_k \idem_\bolda\), with the convention \(s_k \idem_\bolda = 0\) unless \(a_k = a_{k+1}\), and similarly for \(e_k, \hat s_k, \hat e_k\) with the vanishing convention unless \(a_k \not= a_{k+1}.\)
\end{definition}
Note that with our convention the product \(s_k\,\idem_\bolda\) coincides with the symbol \(s_k \idem_\bolda\). 

\begin{remark}\label{rem:7}
To make the relations more transparent, note that the walled Brauer algebra \(\Br_{r,t}(\delta)\) is the algebra on basis given by Brauer diagrams on \(2(r+t)\) vertices, with  additionally an orientation of each strand such that there are \(r\) upwards pointing strands and \(t\) downwards pointing strands; multiplication is given by vertical concatenation, where each closed circle is replaced by \(\delta\) (see Figures~\ref{fig:elements-of-br} and \ref{fig:relations-of-br}). In order to obtain the presentation given in Definition~\ref{def:7}, it is enough to consider generators and relations of the Brauer algebra (see for example \cite[Proposition~1.1]{MR1398116}) but equip them with orientations in  all possible ways, see \cite{ESosp} for the relation between the Brauer algebras and walled Brauer category. Note that the usual walled Brauer algebra from  \cite[Section 2]{MR2781018} is the subalgebra \(\idem_{\up^r \down^t} \Br_{r,t}(\delta) \idem_{\up^r \down^t}\).

It is possible to consider a more general walled Brauer category \(\mathucal{OB}\), which is called \emph{oriented Brauer category} in \cite{Betal}, by letting the number of strands vary and including oriented Brauer diagrams with a different number of source and target points (in informal words, one has additionally cups and caps, and our generators \(e_k\) and \(\hat e_k\) are obtained as composition of a cap and a cup). We remark that the category \(\mathucal{OB}\) contains all our algebras \(\Br_{r,t}(\delta)\) for \(r,t \geq 0\). Defining the category \(\mathucal{OB}\) as a monoidal category requires fewer generators and relations than our presentation (see \cite[Theorem~1.1]{Betal}). Nevertheless, our definition will be more handy for the purposes of our proofs.


\begin{figure}
  \centering
\begin{align*}
\idem_{\up\up\down\up} & = \,
  \begin{tikzpicture}[xscale=0.3,yscale=0.7,anchorbase]
    \draw[->] (1,0) -- ++(0,1);
    \draw[->] (2,0) -- ++(0,1);
    \draw[<-] (3,0) -- ++(0,1);
    \draw[->] (4,0) -- ++(0,1);
  \end{tikzpicture}&
 s_1 \idem_{\up\up\down\up} &= \,
  \begin{tikzpicture}[xscale=0.3,yscale=0.7,anchorbase]
    \draw[->] (1,0) -- ++(1,1);
    \draw[->] (2,0) -- ++(-1,1);
    \draw[<-] (3,0) -- ++(0,1);
    \draw[->] (4,0) -- ++(0,1);
  \end{tikzpicture}&
  s_2 \idem_{\down\up\up\up}& = \,
  \begin{tikzpicture}[xscale=0.3,yscale=0.7,anchorbase]
    \draw[<-] (1,0) -- ++(0,1);
    \draw[->] (2,0) -- ++(1,1);
    \draw[->] (3,0) -- ++(-1,1);
    \draw[->] (4,0) -- ++(0,1);
  \end{tikzpicture}& 
  \hat s_2 \idem_{\up\up\down\up}& = \,
  \begin{tikzpicture}[xscale=0.3,yscale=0.7,anchorbase]
    \draw[->] (1,0) -- ++(0,1);
    \draw[->] (2,0) -- ++(1,1);
    \draw[<-] (3,0) -- ++(-1,1);
    \draw[->] (4,0) -- ++(0,1);
  \end{tikzpicture}\\
 e_2 \idem_{\up\up\down\up} &= \,
  \begin{tikzpicture}[xscale=0.3,yscale=0.7,anchorbase]
    \draw[->] (1,0) -- ++(0,1);
    \draw[->] (2,0) arc (180:0:0.5 and 0.4);
    \draw[<-] (2,1) arc (180:360:0.5 and 0.4);
    \draw[->] (4,0) -- ++(0,1);
  \end{tikzpicture} & 
 \hat e_2 \idem_{\up\up\down\up}& = \,
  \begin{tikzpicture}[xscale=0.3,yscale=0.7,anchorbase]
    \draw[->] (1,0) -- ++(0,1);
    \draw[->] (2,0) arc (180:0:0.5 and 0.4);
    \draw[->] (2,1) arc (180:360:0.5 and 0.4);
    \draw[->] (4,0) -- ++(0,1);
  \end{tikzpicture}&
 e_3 \idem_{\up\up\down\up} &= \,
  \begin{tikzpicture}[xscale=0.3,yscale=0.7,anchorbase]
    \draw[->] (1,0) -- ++(0,1);
    \draw[->] (2,0) -- ++(0,1);
    \draw[<-] (3,0) arc (180:0:0.5 and 0.4);
    \draw[->] (3,1) arc (180:360:0.5 and 0.4);
  \end{tikzpicture}&
 \hat e_3 \idem_{\up\up\down\up}& = \,
  \begin{tikzpicture}[xscale=0.3,yscale=0.7,anchorbase]
    \draw[->] (1,0) -- ++(0,1);
    \draw[->] (2,0) -- ++(0,1);
    \draw[<-] (3,0) arc (180:0:0.5 and 0.4);
    \draw[<-] (3,1) arc (180:360:0.5 and 0.4);
  \end{tikzpicture}
\end{align*}
  \caption{Graphical version of some elements of $\Br_{3,1}(\delta)$.}
  \label{fig:elements-of-br}
\end{figure}
\begin{figure}
  \centering
\begin{gather*}
  \label{eq:50}
  \begin{tikzpicture}[xscale=0.3,yscale=0.5,anchorbase]
    \draw[->] (1,0) -- ++(1,1) -- ++(-1,1);
    \draw[->] (2,0) -- ++(-1,1) -- ++(1,1);
    \draw[<-] (3,0) -- ++(0,2);
    \draw[->] (4,0) -- ++(0,2);
  \end{tikzpicture} \, = \,
  \begin{tikzpicture}[xscale=0.3,yscale=0.5,anchorbase]
    \draw[->] (1,0) -- ++(0,2);
    \draw[->] (2,0) -- ++(0,2);
    \draw[<-] (3,0) -- ++(0,2);
    \draw[->] (4,0) -- ++(0,2);
  \end{tikzpicture}
\qquad
  \begin{tikzpicture}[xscale=0.3,yscale=0.5,anchorbase]
    \draw[->] (1,0) -- ++(0,2);
    \draw[->] (2,0) -- ++(1,1) -- ++(-1,1);
    \draw[<-] (3,0) -- ++(-1,1) -- ++(1,1);
    \draw[->] (4,0) -- ++(0,2);
  \end{tikzpicture} \, = \,
  \begin{tikzpicture}[xscale=0.3,yscale=0.5,anchorbase]
    \draw[->] (1,0) -- ++(0,2);
    \draw[->] (2,0) -- ++(0,2);
    \draw[<-] (3,0) -- ++(0,2);
    \draw[->] (4,0) -- ++(0,2);
  \end{tikzpicture}\,
\qquad
  \begin{tikzpicture}[xscale=0.3,yscale=0.5,anchorbase]
    \draw[->] (1,0) -- ++(0,2);
    \draw[->] (2,0) -- ++(1,1) arc (0:180:0.5 and 0.4) -- ++(1,-1);
    \draw[->] (3,2) arc (0:-180:0.5 and 0.4);
    \draw[->] (4,0) -- ++(0,2);
  \end{tikzpicture} \, = \,
  \begin{tikzpicture}[xscale=0.3,yscale=0.5,anchorbase]
    \draw[->] (1,0) -- ++(0,2);
    \draw[->] (2,0) arc (180:0:0.5 and 0.4);
    \draw[->] (3,2) arc (0:-180:0.5 and 0.4);
    \draw[->] (4,0) -- ++(0,2);
  \end{tikzpicture}\,\\[0.3cm]
  \begin{tikzpicture}[xscale=0.3,yscale=0.5,anchorbase]
    \draw[->] (1,0) -- ++(0,2);
    \draw[->] (2,0) arc (180:0:0.5 and 0.4);
    \draw[<-] (2,2) arc (180:360:0.5 and 0.4);
    \draw[<-] (3,1) arc (0:360:0.5 and 0.4);
    \draw[->] (4,0) -- ++(0,2);
  \end{tikzpicture} \, = \delta \,\,
  \begin{tikzpicture}[xscale=0.3,yscale=0.5,anchorbase]
    \draw[->] (1,0) -- ++(0,2);
    \draw[->] (2,0) -- ++(0,2);
    \draw[<-] (3,0) -- ++(0,2);
    \draw[->] (4,0) -- ++(0,2);
  \end{tikzpicture}\qquad\qquad
  \begin{tikzpicture}[xscale=0.3,yscale=0.5,anchorbase]
    \draw[->] (1,0) -- ++(0,2);
    \draw[->] (2,0) arc (180:0:0.5 and 0.4);
    \draw[<-] (2,2) arc (180:360:0.5 and 0.4);
    \draw[->] (3,1) arc (0:360:0.5 and 0.4);
    \draw[->] (4,0) -- ++(0,2);
  \end{tikzpicture} \, = \delta \,\,
  \begin{tikzpicture}[xscale=0.3,yscale=0.5,anchorbase]
    \draw[->] (1,0) -- ++(0,2);
    \draw[->] (2,0) -- ++(0,2);
    \draw[<-] (3,0) -- ++(0,2);
    \draw[->] (4,0) -- ++(0,2);
  \end{tikzpicture}\,\\[0.3cm]
    \begin{tikzpicture}[xscale=0.3,yscale=0.5,anchorbase]
    \draw[->] (1,0) -- ++(1,1) -- ++(1,1) -- ++(0,1);
    \draw[->] (2,0) -- ++(-1,1) -- ++(0,1) -- ++(1,1);
    \draw[<-] (3,0) -- ++(0,1) -- ++(-1,1) -- ++(-1,1);
    \draw[->] (4,0) -- ++(0,3);
  \end{tikzpicture} \, = \,
    \begin{tikzpicture}[xscale=0.3,yscale=0.5,anchorbase]
    \draw[->] (1,0) -- ++(0,1) -- ++(1,1) -- ++(1,1);
    \draw[->] (2,0) -- ++(1,1) -- ++(0,1) -- ++(-1,1);
    \draw[<-] (3,0) -- ++(-1,1) -- ++(-1,1) -- ++(0,1);
    \draw[->] (4,0) -- ++(0,3);
  \end{tikzpicture}\qquad
    \begin{tikzpicture}[xscale=0.3,yscale=0.5,anchorbase]
    \draw[->] (1,0) -- ++(0,3);
    \draw[->] (2,0) arc (180:0:0.5 and 0.4);
    \draw[<-] (2,3) arc (180:360:0.5 and 0.4);
    \draw[->] (4,0) -- ++(0,1) arc (0:180:0.5 and 0.4) arc (0:-180:0.5 and 0.4) -- ++(0,1) arc (180:0:0.5 and 0.4) arc (180:360:0.5 and 0.4) -- ++(0,1);
  \end{tikzpicture} \, = \,
  \begin{tikzpicture}[xscale=0.3,yscale=0.5,anchorbase]
    \draw[->] (1,0) -- ++(0,3);
    \draw[->] (2,0) arc (180:0:0.5 and 0.4);
    \draw[<-] (2,3) arc (180:360:0.5 and 0.4);
    \draw[->] (4,0) -- ++(0,3);
  \end{tikzpicture}\qquad
    \begin{tikzpicture}[xscale=0.3,yscale=0.5,anchorbase]
    \draw[->] (1,0) -- ++(1,1) arc (180:0:0.5 and 0.4) -- ++(0,-1);
    \draw[->] (2,0) -- ++(-1,1) -- ++(0,1) arc (180:0:0.5 and 0.4) arc (180:360:0.5 and 0.4) -- ++(0,1);
    \draw[->] (2,3) arc (0:-180:0.5 and 0.4);
    \draw[->] (4,0) -- ++(0,3);
  \end{tikzpicture} \, = \,
  \begin{tikzpicture}[xscale=0.3,yscale=0.5,anchorbase]
    \draw[->] (1,0) -- ++(0,1) arc (180:0:0.5 and 0.4) -- ++(1,-1);
    \draw[->] (2,0) -- ++(1,1) -- ++(0,2);
    \draw[<-] (1,3) arc (180:360:0.5 and 0.4);
    \draw[->] (4,0) -- ++(0,3);
  \end{tikzpicture}
\end{gather*}
    \caption{Graphical version of some relations of $\Br_{3,1}(\delta)$, namely (Br3), (Br5), (Br4b), (Br6d), (Br6c) respectively.}
  \label{fig:relations-of-br}
\end{figure}
\end{remark}

Because of the diagrammatics we call the sequences \(\bolda\) \emph{orientations}. Note that some orientations can make a relation trivial. For example, it is easy to see that \(e_k \hat e_{k+1} e_k = 0\).  The following says that that the so-called first Reidemeister relation on diagrams is equivalent to relation~\ref{item:br:16}:

\begin{lemma}
  \label{lem:50}
  Relation~\ref{item:br:16} can be replaced with
  \begin{equation}
    \label{eq:25}
     e_{k+1} s_k e_{k+1}\idem_\bolda = e_{k+1}\idem_\bolda \qquad \text{and} \qquad e_k s_{k+1} e_k \idem_{\bolda'} = e_k \idem_{\bolda'}
  \end{equation}
  for all \(\bolda,\bolda' \in\Seq_{r,t}\) with \(a_k=a_{k+1}\) and \(a'_{k+1}=a'_{k+2}\).
\end{lemma}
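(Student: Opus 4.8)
The statement asserts that two presentations coincide, so the plan is to prove both implications modulo all the relations of Definition~\ref{def:7} other than~\ref{item:br:16}. The heart of the matter is the single identity
\begin{equation}\label{eq:firstreid}
  e_{k+1} s_k e_{k+1}\idem_\bolda = \hat e_{k+1} e_k \hat e_{k+1}\idem_\bolda
\end{equation}
valid for every $\bolda\in\Seq_{r,t}$ with $a_k=a_{k+1}$ as a consequence of the relations of Definition~\ref{def:7} \emph{other than}~\ref{item:br:16}, together with its mirror $e_k s_{k+1} e_k \idem_{\bolda'} = \hat e_k e_{k+1} \hat e_k \idem_{\bolda'}$ for $a'_{k+1}=a'_{k+2}$. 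Granting \eqref{eq:firstreid}: its right-hand side is an instance of $\dot e_{k+1}\dot e_k\dot e_{k+1}$, so if \ref{item:br:16} holds it equals $e_{k+1}\idem_\bolda$, which is precisely \eqref{eq:25}; conversely, assuming \eqref{eq:25}, the left-hand side of \eqref{eq:firstreid} equals $e_{k+1}\idem_\bolda$, so we obtain the single instance $\hat e_{k+1} e_k \hat e_{k+1}\idem_\bolda = e_{k+1}\idem_\bolda$ of~\ref{item:br:16}, and a short computation using only~\ref{item:br:13} (and the idempotent relations~\ref{item:26},~\ref{item:27}) propagates it to all remaining instances of $\dot e_{k+1}\dot e_k\dot e_{k+1}=\dot e_{k+1}$ (by pre- or post-composing with $\hat s_{k+1}$ and reabsorbing it via~\ref{item:br:13}); the instances of $\dot e_k\dot e_{k+1}\dot e_k=\dot e_k$ come the same way from the mirror identity.

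To prove \eqref{eq:firstreid} we may assume $a_{k+1}\neq a_{k+2}$, since otherwise both sides vanish; together with $a_k=a_{k+1}$ this fixes the orientation near $k$ up to the global flip $\up\leftrightarrow\down$, so we treat $(a_k,a_{k+1},a_{k+2})=(\up,\up,\down)$ and write $\bolda'=\sfs_{k+1}\bolda$. Relation~\ref{item:br:13} lets us replace each of the two outer $e_{k+1}$ by $\hat e_{k+1}$ conjugated by $\hat s_{k+1}$, giving $e_{k+1}s_k e_{k+1}\idem_\bolda = \hat e_{k+1}\,\hat s_{k+1} s_k \hat s_{k+1}\,\hat e_{k+1}\idem_\bolda$; the braid relation~\ref{item:br:4} rewrites the middle three factors as $\hat s_k s_{k+1}\hat s_k$ on the orientation $\bolda'$; relation~\ref{item:br:14} applied to the leftmost pair $\hat e_{k+1}\hat s_k$ turns it into $\hat e_{k+1}\hat e_k s_{k+1}$, producing a factor $s_{k+1}^2$ acting on the orientation $(\down,\up,\up)$, where $\hat s_{k+1}$ is undefined and hence $s_{k+1}^2$ acts as the identity idempotent by~\ref{item:br:1}; finally~\ref{item:br:13} absorbs the remaining $\hat s_k$ into $\hat e_k$, leaving $\hat e_{k+1}e_k\hat e_{k+1}\idem_\bolda$. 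The remaining case $(a_k,a_{k+1},a_{k+2})=(\down,\down,\up)$ and the mirror identity follow by symmetry (flipping all arrows, respectively interchanging the roles of $k$ and $k+1$).

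The only genuine subtlety, and the step where care is needed, is the orientation bookkeeping: each generator $s_k,\hat s_k,e_k,\hat e_k$ is defined only on, and only changes, orientations in a prescribed way, so at every rewrite above one must check that the expression typechecks and that each relation is invoked in the correct dotted form — for instance $\hat s_{k+1}e_{k+1}\idem_\bolda=\hat e_{k+1}\idem_\bolda$ rather than $e_{k+1}\idem_\bolda$, because $\hat s_{k+1}$ swaps the orientation at $k+1,k+2$. Each individual check of this kind is immediate; the one non-routine point is finding the chain of rewrites itself, in particular the use of the braid relation~\ref{item:br:4} to shift the $\hat s$'s from index $k+1$ to index $k$ so that the mixed relation~\ref{item:br:14} becomes applicable.
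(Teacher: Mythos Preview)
Your proof is correct. The route differs from the paper's in one essential way: you first establish the bridge identity $e_{k+1}s_ke_{k+1}\idem_\bolda=\hat e_{k+1}e_k\hat e_{k+1}\idem_\bolda$ \emph{independently of}~\ref{item:br:16}, using the braid relation~\ref{item:br:4} as the pivotal step; both implications then drop out at once. The paper instead does two separate short computations, one for each direction, using only~\ref{item:br:13} and~\ref{item:br:14} (never the braid relation): for the forward direction it expands the rightmost $e_{k+1}$ via~\ref{item:br:16} as $\hat e_{k+1}e_k\hat e_{k+1}$ and simplifies; for the converse it rewrites $e_{k+1}e_ke_{k+1}$ as $\hat e_{k+1}s_k\hat e_{k+1}$ via~\ref{item:br:13} and~\ref{item:br:14} and then invokes the conjugation observation $e_{k+1}s_ke_{k+1}\idem_\bolda=e_{k+1}\idem_\bolda\Leftrightarrow\hat e_{k+1}s_k\hat e_{k+1}\idem_{\sfs_{k+1}\bolda}=e_{k+1}\idem_{\sfs_{k+1}\bolda}$. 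Your approach is more symmetric and conceptually cleaner (one identity does both directions), at the cost of invoking~\ref{item:br:4}; the paper's is slightly shorter and stays within the ``local'' relations~\ref{item:br:13},~\ref{item:br:14}. Your remark that the propagation to all dotted variants of~\ref{item:br:16} is automatic via~\ref{item:br:13} is accurate and worth keeping.
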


\begin{proof}
  Observe that by multiplying on both sides with \(\hat s_{k+1}\) we have
  \begin{equation}
    \label{eq:250}
    e_{k+1} s_k e_{k+1}\idem_\bolda = e_{k+1}\idem_\bolda  \iff \hat e_{k+1} s_k \hat e_{k+1} \idem_{\sfs_{k+1} \bolda} = e_{k+1} \idem_{\sfs_{k+1} \bolda}.
  \end{equation}

  First, let us check that \eqref{eq:25} holds. Indeed,
  \begin{equation*}
    \begin{aligned}
      e_{k+1} s_k e_{k+1} & \stackrel{\ref{item:br:16}}{=} e_{k+1} s_k \hat e_{k+1} e_k \hat e_{k+1} 
        & \stackrel{\ref{item:br:14}}{= }    e_{k+1} \hat s_{k+1} e_k \hat e_{k+1}  
        & \stackrel{\ref{item:br:13}}{= }    \hat e_{k+1}  e_k \hat e_{k+1}  
         \\& \stackrel{\ref{item:br:16}}{= }    e_{k+1}   
    \end{aligned}
  \end{equation*}
  Now suppose instead that \eqref{eq:25} holds. Then
  \begin{equation}
    \label{eq:249}
    \begin{aligned}
      e_{k+1} e_{k} e_{k+1}  & \stackrel{\ref{item:br:13}}{= } \hat e_{k+1} \hat s_{k+1} e_{k} e_{k+1} 
        & \stackrel{\ref{item:br:14}}{= }      \hat e_{k+1}   s_{k} \hat e_{k+1}   
         &\stackrel{\ref{eq:25}}{= }         e_{k+1},   
    \end{aligned}
  \end{equation}
  and we are done.
\end{proof}

The following allows us to  simplify slightly relation~\ref{item:br:4}:

\begin{lemma}
  \label{lem:52}
  Instead of \ref{item:br:4}, it suffices to impose \(s_k s_{k+1} s_k = s_{k+1} s_k s_{k+1}\) and either \(\hat s_k \hat s_{k+1} s_k =s_{k+1} \hat s_k \hat s_{k+1}\) or \(\hat s_k s_{k+1} \hat s_k = \hat s_{k+1} s_k \hat s_{k+1}\).
\end{lemma}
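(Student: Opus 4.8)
The plan is to first rewrite relation~\ref{item:br:4} as an equivalent list of ``oriented braid relations'', and then to check that, given the remaining relations of Definition~\ref{def:7}, the shorter list in the statement recovers the rest. Since \(\hat s_k\) permutes the orientation while \(s_k\) does not, the relation \(\dot s_k\dot s_{k+1}\dot s_k\idem_\bolda=\dot s_{k+1}\dot s_k\dot s_{k+1}\idem_\bolda\) takes, according to the equality pattern of the triple \((a_k,a_{k+1},a_{k+2})\), exactly one of the following four forms (there is no ``all three distinct'' case, orientations being \(\{\up,\down\}\)-valued):
\begin{enumerate}[(i)]
\item if \(a_k=a_{k+1}=a_{k+2}\):\quad \(s_ks_{k+1}s_k\idem_\bolda=s_{k+1}s_ks_{k+1}\idem_\bolda\);
\item if \(a_k=a_{k+1}\neq a_{k+2}\):\quad \(\hat s_k\hat s_{k+1}s_k\idem_\bolda=s_{k+1}\hat s_k\hat s_{k+1}\idem_\bolda\);
\item if \(a_k\neq a_{k+1}=a_{k+2}\):\quad \(s_k\hat s_{k+1}\hat s_k\idem_\bolda=\hat s_{k+1}\hat s_ks_{k+1}\idem_\bolda\);
\item if \(a_k=a_{k+2}\neq a_{k+1}\):\quad \(\hat s_ks_{k+1}\hat s_k\idem_\bolda=\hat s_{k+1}s_k\hat s_{k+1}\idem_\bolda\).
\end{enumerate}
Form~(i) is the first relation imposed in the statement, forms~(ii) and~(iv) are the two alternatives, and~(iii) is the leftover one. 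Since removing relations only enlarges the algebra, the algebra presented by Definition~\ref{def:7} with~\ref{item:br:4} replaced by~(i) together with~(ii) (or together with~(iv)) surjects onto \(\Br_{r,t}(\delta)\), and the lemma reduces to deriving the two missing forms among~(i)--(iv) inside the new algebra.

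The main device is the invertibility of non-vanishing \(s\)-words. From~\ref{item:br:1} and the vanishing conventions one gets \(s_j^2\idem_\bolda=\idem_\bolda\) when \(a_j=a_{j+1}\) and \(\hat s_j^2\idem_\bolda=\idem_\bolda\) when \(a_j\neq a_{j+1}\); combined with~\ref{item:28} and~\ref{item:29} this shows that, in the first case, \(s_j\idem_\bolda\) is a self-inverse automorphism of \(\idem_\bolda\), and, in the second case, that \(\hat s_j\idem_\bolda\) and \(\hat s_j\idem_{\sfs_j\bolda}\) are mutually inverse isomorphisms between \(\idem_\bolda\) and \(\idem_{\sfs_j\bolda}\). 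Hence every non-vanishing product \(\dot s_{j_1}\cdots\dot s_{j_m}\idem_\bolda\) is an isomorphism \(\idem_\bolda\to\idem_\bolb\), with inverse the same product read backwards starting from \(\idem_\bolb\). This yields two elementary moves applicable to each relation~(i)--(iv): \emph{(1)} replacing both sides by their inverses (legitimate, since both sides are non-vanishing words with the same source and target), and \emph{(2)} pre- or post-composing both sides with invertible \(\hat s_j\)'s.

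With these moves the derivation is short; conceptually it amounts to transporting a braid relation around the two size-three \(\bbS_3\)-orbits \(\{\up\up\down,\up\down\up,\down\up\up\}\) and \(\{\down\down\up,\down\up\down,\up\down\down\}\) of orientations, whose members correspond respectively to forms~(ii), (iv),~(iii). Displaying only the entries in positions \(k,k+1,k+2\): start from the instance of~(ii) at \(\idem_{\up\up\down}\), an identity between invertible maps \(\idem_{\up\up\down}\to\idem_{\down\up\up}\). Move~(1) rewrites it as \(s_k\hat s_{k+1}\hat s_k\idem_{\down\up\up}=\hat s_{k+1}\hat s_ks_{k+1}\idem_{\down\up\up}\), which is the instance of~(iii) at \(\idem_{\down\up\up}\). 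Move~(2)---multiplying on the left by \(\hat s_k\idem_{\down\up\up}\) and on the right by \(\hat s_{k+1}\idem_{\up\down\up}\), and collapsing the two \(\hat s^2\)-factors that then sit next to \(\idem_{\up\down\up}\)---turns it into the instance of~(iv) at \(\idem_{\up\down\up}\). The evident \(\up\leftrightarrow\down\) symmetry applied to the instance of~(ii) at \(\idem_{\down\down\up}\) gives the instances of~(iii) and~(iv) at \(\idem_{\up\down\down}\) and \(\idem_{\down\up\down}\); since~(iii) and~(iv) only make sense for exactly these four orientations, all of~\ref{item:br:4} is recovered. When~(iv) is imposed instead of~(ii), the same computation read backwards (conjugating the instance of~(iv) at \(\idem_{\up\down\up}\) by the inverse \(\hat s_j\)'s) recovers the instance of~(ii) at \(\idem_{\up\up\down}\), and one concludes as above. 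I expect the only obstacle to be the orientation bookkeeping in move~(2)---making sure the spurious \(\hat s_j^2\) become adjacent to the idempotent where they collapse---but with the invertibility statement in place each such check is a one-line computation.
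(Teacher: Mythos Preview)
Your argument is correct and follows essentially the same route as the paper: both reduce the braid relation to its oriented forms and pass between the mixed ones by multiplying on the left by \(\hat s_k\) and on the right by \(\hat s_{k+1}\) and cancelling the resulting \(\hat s^2\)-factors via \ref{item:br:1}. The only difference is that you list four oriented forms and also explain why (ii) and (iii) are equivalent (by inversion), whereas the paper tacitly identifies these two and records only three ``nontrivial orientations''; your treatment is therefore a bit more careful, but the core computation is identical.
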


\begin{proof}
  It is immediate to check that the three possibilities given in the statement of the lemma are the only nontrivial orientations for the braid relation \ref{item:br:4}. Hence it is enough to see that the second one and the third one are equivalent. Indeed, we have
  \begin{equation}
    \label{eq:247}
    \begin{aligned}
      \hat s_k \hat s_{k+1} s_k =s_{k+1} \hat s_k \hat
      s_{k+1} & \iff \hat s_k \hat s_k \hat s_{k+1} s_k \hat
      s_{k+1} = \hat s_k s_{k+1} \hat s_k \hat s_{k+1} \hat
      s_{k+1} \\
      &\iff \hat s_{k+1} s_k \hat s_{k+1} = \hat s_k s_{k+1} \hat s_k.
    \end{aligned}
  \end{equation}
  as we wanted.
\end{proof}


We define the \emph{Jucys-Murphy elements} \(\{\xi_1\idem_\bolda,\dotsc,\xi_{r+t}\idem_\bolda\}\) of the walled Brauer category by setting \(\xi_1 \idem_\bolda=0\) and by the following recursive formulas:
\begin{equation}
  \label{eq:15}
  \xi_{k+1} \idem_\bolda =
  \begin{cases}
    s_k \xi_k s_k \idem_\bolda + s_k \idem_\bolda & \text{if } a_k=a_{k+1},\\
    \hat s_k \xi_k \hat s_k \idem_{\sfs_k \bolda} -  e_k \idem_{\sfs_k \bolda} &\text{if } a_k\neq a_{k+1}.
  \end{cases}
\end{equation}
where again  \(\xi_k = \sum_{\bolda \in \Seq_{r,t}} \xi_k \idem_\bolda\).

\begin{definition}\label{def:2}
  Let \(r,t \in \N\) and fix a sequence \(\boldsymbol
  \omega=(\omega_k)_{k \in \N}\) of complex parameters. The {\em
    degenerate affine walled Brauer algebra} \(\uVB_{r,t}(\boldsymbol
  \omega)\) is generated by the generators \eqref{eq:339} of the walled Brauer algebra and by elements \(y_i\) for \(1 \leq i \leq r+t\) subject to the above relations \ref{item:26}--\ref{item:br:8} and additionally to the following 
    \begin{enumerate}[resume,label=(Br\arabic*),ref=\arabic*]
      \setlength{\itemsep}{1ex}
      \item \label{item:8}
        \begin{enumeratesubb}
        \item \label{item:9} \(y_i \idem_\bolda = \idem_\bolda y_i\),
        \item \label{item:11}\(y_i y_j=y_j y_i\),
        \end{enumeratesubb}
    \item \label{item:2}
      \begin{enumeratesubb}
      \item \label{item:5}\(\dot s_k y_i= y_i \dot s_k\) for \(i \neq k,k+1\),
        \item \label{item:10}\(\dot e_k y_i= y_i \dot e_k\) for \(i \neq k,k+1\),
      \end{enumeratesubb}
      \item \label{item:7}\(e_{1} y_1^r e_{1} \idem_\bolda = \omega_r e_{1} \idem_\bolda\) for \(r \in \N\) if \((a_1,a_2 ) = {\uparrow \uparrow}\),
      \item \label{item:17}
        \begin{enumeratesubb}
        \item \label{item:18}\(s_k y_k\idem_\bolda - y_{k+1}s_k\idem_\bolda = - \idem_\bolda \idem_{\sfs_k \bolda}\) and \( s_k
          y_{k+1}\idem_\bolda -y_k s_k\idem_\bolda = \idem_\bolda \idem_{\sfs_k \bolda}\),
        \item \label{item:19}\(\hat s_k y_k - y_{k+1} \hat s_k = \hat e_k\) and
          \(\hat s_k y_{k+1} - y_k \hat s_k  = - \hat e_k\),
        \end{enumeratesubb}
      \item \label{item:20}
        \begin{enumeratesubb}
        \item \label{item:21}\(\dot e_k(y_k + y_{k+1})=0\),
        \item \label{item:22}\((y_k + y_{k+1}) \dot e_k = 0\).
        \end{enumeratesubb}
    \end{enumerate}
\end{definition}


\begin{remark}
  \label{rem:3}
  Note that by \cite[5.5]{Betal} our degenerate affine walled Brauer algebra is isomorphic to the corresponding algebra inside the affine oriented Brauer category \(\mathucal{AOB}(\omega_0,\omega_1,....).\) defined in \cite{Betal}.
\end{remark}

For \(\bolda \in \Seq_{r,t}\), we set
\begin{equation}
  \label{eq:46}
  \Br_\bolda(\delta) = \idem_\bolda \Br_{r,t}(\delta) \idem_\bolda \qquad \text{and} \qquad \uVB_\bolda(\bomega) = \idem_\bolda \uVB_{r,t}(\bomega) \idem_\bolda.
\end{equation}

\begin{remark}
  Let \(\omega_i = \delta\) for all \(i\geq 0\). Note that we have then a surjective homomorphism \(\VB_{r,t}(\bomega)
  \mapto \Br_{r,t}(\delta)\) which maps \(y_i\idem_\bolda \mapsto \xi_i \idem_\bolda\).\label{rem:4}
\end{remark}

Finally, we give the definition of cyclotomic quotients:

\begin{definition}
  \label{def:4}
  Given \(r,t \in \N\) and \(\boldsymbol \omega\) as above and additionally complex numbers \(\beta^\up_j,\beta^\down_j\) for \(j=1,\dotsc,l\)
then the {\em cyclotomic walled Brauer algebra} \(\uVB_{r,t}(\boldsymbol \omega;
  \beta^\up_j;\beta^\down_j)\) is the quotient of \(\uVB_{r,t}(\bomega)\) obtained by imposing on the
  degenerate affine walled Brauer category \(\uVB_{r,t}(\bomega)\) the following additional relations:
  \begin{align}
    (y_{1} - \beta^\up_1)(y_1 - \beta^\up_2)\dotsm(y_{1}-\beta^\up_l)\idem_{\bolda} = 0 & \quad \text{for every
    } \bolda \in \Seq_{r,t}
    \text{ with } a_1=\up, \label{eq:48}\\
    (y_{1} - \beta^\down_1)(y_1-\beta^\down_2)\dotsm(y_{1}-\beta^\down_l)\idem_\bolda = 0 & \quad \text{for
      every } \bolda \in \Seq_{r,t} \text{ with } a_1=\down. \label{eq:49}
  \end{align}
\end{definition}

The integer \(l\geq 0\) is called the \emph{level} of the cyclotomic quotient. We observe that in the cyclotomic quotient of level \(l\) the parameters \(\omega_j\) for \(j \geq l\) are uniquely determined:
\begin{lemma}
  \label{lem:5}
  Let \(r,t \in \N\), let \(l\) be a non-negative integer and let \(\omega_{j-1},\beta_j^\up,\beta_j^\down \in \C\) for \(j=1,\dotsc,l\). Then there is at most one choice of parameters \(\omega_h\) for \(h \geq l\) such that the cyclotomic walled Brauer algebra \(\uVB_{r,t}(\bomega;\beta_j^\up;\beta_j^\down)\) is non-zero.
\end{lemma}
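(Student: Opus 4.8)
\emph{Strategy.} The parameters $\omega_h$ enter the presentation of $\uVB_{r,t}(\bomega)$ only through the single family of relations $e_1 y_1^h e_1 \idem_\bolda = \omega_h e_1 \idem_\bolda$ (for $\bolda\in\Seq_{r,t}$ with $a_1=\up$) of Definition~\ref{def:2}. The plan is to feed the level-$l$ cyclotomic relation \eqref{eq:48} into this family: modulo the monic degree-$l$ polynomial $\prod_j(y_1-\beta^\up_j)$ every power of $y_1$ collapses to a $\C$-linear combination of $y_1^0\idem_\bolda,\dots,y_1^{l-1}\idem_\bolda$, and then the $\omega$-relation turns this into an explicit formula for each $\omega_h$, $h\ge l$, in terms of $\omega_0,\dots,\omega_{l-1}$ and the $\beta^\up_j$; uniqueness then drops out.

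\emph{The computation.} We may assume $r\ge 1$, and fix $\bolda\in\Seq_{r,t}$ with $a_1=\up$. Writing $\prod_{j=1}^{l}(u-\beta^\up_j)=u^l-\sum_{m<l}c_m u^m$, relation \eqref{eq:48} first gives $y_1^l\idem_\bolda=\sum_{m<l}c_m\,y_1^m\idem_\bolda$, and then by induction on $h$ one obtains, for every $h\ge l$, scalars $c^{(h)}_m$ depending only on $\beta^\up_1,\dots,\beta^\up_l$ (the coefficients of the remainder of $u^h$ modulo $\prod_j(u-\beta^\up_j)$) with $y_1^h\idem_\bolda=\sum_{m<l}c^{(h)}_m\,y_1^m\idem_\bolda$. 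I would then sandwich this identity between two copies of $e_1$ and apply the $\omega$-relation term by term:
\[
 \omega_h\, e_1\idem_\bolda \;=\; e_1 y_1^h e_1\idem_\bolda \;=\; \sum_{m<l}c^{(h)}_m\,e_1 y_1^m e_1\idem_\bolda \;=\;\Bigl(\textstyle\sum_{m<l}c^{(h)}_m\,\omega_m\Bigr)e_1\idem_\bolda.
\]
Hence, with $\widetilde{\omega}_h:=\sum_{m<l}c^{(h)}_m\,\omega_m$, a quantity built only from $\omega_0,\dots,\omega_{l-1}$ and $\beta^\up_1,\dots,\beta^\up_l$,
\[
 (\omega_h-\widetilde{\omega}_h)\,e_1\idem_\bolda \;=\; 0 \qquad\text{in } \uVB_{r,t}(\bomega;\beta^\up_j;\beta^\down_j),\ \text{for all } h\ge l.
\]

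\emph{Conclusion and the expected obstacle.} Given two sequences $\bomega,\bomega'$ agreeing in degrees $<l$ with $\uVB_{r,t}(\bomega;\beta^\up_j;\beta^\down_j)\ne 0\ne\uVB_{r,t}(\bomega';\beta^\up_j;\beta^\down_j)$, I would take $h\ge l$ minimal with $\omega_h\ne\omega'_h$; since $\widetilde{\omega}_h$ only sees the coinciding low-degree parameters, not both of $\omega_h-\widetilde{\omega}_h$, $\omega'_h-\widetilde{\omega}_h$ vanish, say the first does not, and then the displayed identity forces $e_1\idem_\bolda=0$ in $\uVB_{r,t}(\bomega;\beta^\up_j;\beta^\down_j)$ for every $\bolda$ with $a_1=\up$. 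Conjugating by the relevant $\hat s_k$ and sliding the relation along the strand then kills every generator $e_k\idem_{\bolda'}$ and $\hat e_k\idem_{\bolda'}$. The hard part, which I expect to be the main obstacle, is the final step: showing that a cyclotomic walled Brauer algebra in which all the $\dot e_k$ vanish must itself be zero, giving the contradiction $\bomega=\bomega'$. One cannot achieve this from the $\up$-relations alone, since the residual algebra superficially resembles a non-zero cyclotomic degenerate affine Hecke-type algebra; so I would run the mirror of the above computation on the $\down$-side from \eqref{eq:49} and exploit the mixed relations $\hat s_k y_k-y_{k+1}\hat s_k=\hat e_k$ and $\dot e_k(y_k+y_{k+1})=0$ that tie the two halves together, to see that after killing all $\dot e_k$ nothing survives. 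Everything preceding this collapse step is routine bookkeeping with polynomial remainders and the single $\omega$-relation.
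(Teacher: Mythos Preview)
Your core computation is exactly the paper's proof: the paper's argument is one sentence, namely that relation~(Br\ref{item:7}) combined with the cyclotomic equation~\eqref{eq:48} forces the $\omega_j$ to satisfy the linear recurrence of degree $l$ whose characteristic polynomial is $\prod_j(u-\beta_j^\up)$, and this recurrence determines $\omega_h$ for $h\ge l$ from $\omega_0,\dots,\omega_{l-1}$. Everything you wrote up to and including your displayed identity $(\omega_h-\widetilde\omega_h)\,e_1\idem_\bolda=0$ reproduces this, so in that sense you have the paper's proof.

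Where you diverge is in worrying about the case $e_1\idem_\bolda=0$. The paper simply does not engage with this; it treats the recurrence as the content of the lemma and stops. Your instinct that there is something to check here is reasonable, but your proposed resolution cannot work. If all $\dot e_k$ vanish in the cyclotomic quotient, the relations~(Br\ref{item:19}) degenerate to $\hat s_k y_k = y_{k+1}\hat s_k$, and what remains is generated by the $\idem_\bolda$, $s_k$, $\hat s_k$, $y_i$ subject to Hecke-type relations together with the level-$l$ cyclotomic conditions on $y_1$. This residual algebra is a genuine (nonzero) object: for each $\bolda$ it looks like a cyclotomic degenerate affine Hecke algebra, and the $\hat s_k$ simply permute these blocks. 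Running the mirror computation on the $\down$-side does not help, because the $\omega$-parameters enter only through the single relation~(Br\ref{item:7}); the $\down$-cyclotomic relation~\eqref{eq:49} gives you nothing new about the $\omega_h$, and once $\dot e_k=0$ the mixed relations you mention become $\hat s_k y_k=y_{k+1}\hat s_k$ and $0=0$, which carry no further constraints. So the ``collapse'' you are hoping for does not happen, and in such degenerate situations the algebra is literally independent of the choice of $\omega_h$ for $h\ge l$.

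In short: the paper's proof and yours coincide on the recurrence, which is the only substance here; the paper takes the lemma to mean ``the recurrence uniquely pins down the $\omega_h$'' and does not attempt the stronger literal reading you are chasing. Your proposed route to that stronger reading is blocked, not merely hard.
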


\begin{proof}
  This is straightforward, since by relation~(Br\ref{item:7}) together with the cyclotomic equation~\eqref{eq:48} the parameters \(\omega_j\) must satisfy a linear recurrence relation of degree \(l\), whose characteristic polynomial is exactly \eqref{eq:48}.
\end{proof}

We do not claim however that the choice of the other parameters is free.

\section{Generalized eigenvalues and eigenspaces}
\label{sec:some-lemmas}

The elements \(\{y_k \idem_{\bolda} \suchthat \bolda \in \Seq_{r,t}, 1 \leq k \leq r+t\}\) generate a commutative subalgebra of \(\uVB_{r,t}(\bomega)\). We study their simultaneous (generalized) eigenspaces. 

Let \(M\) be a finite-dimensional (left) \(\uVB_{r,t}(\bomega)\)--module with its decomposition as \(M=\bigoplus_{\bolda \in \Seq_{r,t}} \idem_\bolda M\). Then \(M\) can be decomposed further into the direct sum of the generalized simultaneous eigenspaces
\begin{equation}
  \label{eq:4}
  M = \bigoplus_{\substack{\bolda \in \Seq_{r,t}\\\boldi \in \C^{r+t}}} \idem_\bolda M_\boldi
\end{equation}
where, for \(N \gg 0\) sufficiently large,  \((y_k  - \rmi_k)^N \idem_\bolda M_\boldi= 0\) for all \(k\).  

\begin{lemma}
  \label{lem:1}
  For all \(1 \leq k \leq r+t\) we have
  \begin{equation}
    \label{eq:5}
    s_k \idem_\bolda M_\boldi \subseteq \idem_\bolda M_\boldi + \idem_\bolda M_{\sfs_k \boldi}.
  \end{equation}
\end{lemma}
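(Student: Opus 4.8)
The plan is to reduce everything to the action of $s_k$ on the two-dimensional (in terms of orientation-components) slice spanned by $\idem_\bolda M$ and $\idem_{\sfs_k\bolda} M$, and then to use the commutation relations between $s_k$ (or $\hat s_k$) and the polynomial generators $y_1,\dots,y_{r+t}$ to control how the generalized eigenvalue tuple $\boldi$ can change. First I would fix $\bolda$, $\boldi$ and a vector $v \in \idem_\bolda M_\boldi$, and observe that since $s_k \idem_\bolda = \idem_\bolda s_k \idem_\bolda$ (relation \ref{item:28}) when $a_k = a_{k+1}$ and $\hat s_k \idem_\bolda = \idem_{\sfs_k\bolda}\hat s_k\idem_\bolda$ (relation \ref{item:29}) when $a_k \neq a_{k+1}$, the element $s_k v$ (resp.\ $\hat s_k v$) lands in $\idem_\bolda M$ (resp.\ $\idem_{\sfs_k\bolda} M$). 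So in the case $a_k = a_{k+1}$ the target orientation is unchanged ($\sfs_k\bolda = \bolda$), and in the case $a_k \neq a_{k+1}$ the target orientation is exactly $\sfs_k\bolda$; either way the orientation component is as claimed, and it remains only to track the eigenvalue component $\boldi$.

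For the eigenvalue component, the key point is that $s_k$ (or $\hat s_k$) commutes with $y_i$ for $i \neq k, k+1$ by relation \ref{item:5}, so $s_k$ preserves the generalized $y_i$-eigenvalue for every $i \notin \{k,k+1\}$; thus the only coordinates of $\boldi$ that can change are the $k$-th and the $(k+1)$-th. For those two, I would use the Nazarov-type relations \ref{item:18} (case $a_k = a_{k+1}$) or \ref{item:19} (case $a_k \neq a_{k+1}$), which say, modulo idempotents, that $s_k y_k - y_{k+1} s_k$ and $s_k y_{k+1} - y_k s_k$ are bounded (in fact equal to $\pm\idem_\bolda\idem_{\sfs_k\bolda}$, resp.\ $\pm\hat e_k$). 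The cleanest way to extract the eigenvalue bound is to pass to the associated graded / localize: on the quotient by the span of the other eigenspaces these correction terms are ``lower order'', so the operator $s_k$ (resp.\ $\hat s_k$) intertwines the pair $(y_k, y_{k+1})$ acting on $\idem_\bolda M_\boldi$ with $(y_{k+1}, y_k)$ acting on the target, up to a nilpotent error. Concretely, one shows by induction on $N$ that $(y_k - \rmi_{k+1})^N (y_{k+1} - \rmi_k)^N s_k v$ lies in $\sum_{m<N}$ (images of lower powers), hence for $N \gg 0$ it is killed; this exactly says $s_k v \in \idem_\bolda M_{\sfs_k\boldi} + \idem_\bolda M_\boldi$, where the extra $\idem_\bolda M_\boldi$ summand absorbs the case $\rmi_k = \rmi_{k+1}$ (where $\sfs_k\boldi = \boldi$) as well as the contributions of the bounded correction terms.

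The main obstacle I expect is bookkeeping the correction terms cleanly: the relations \ref{item:18}, \ref{item:19} produce terms like $\idem_\bolda\idem_{\sfs_k\bolda}$ and $\hat e_k$, and one must check that when these are applied to $v \in \idem_\bolda M_\boldi$ they again lie in a sum of generalized eigenspaces whose eigenvalue tuples differ from $\boldi$ only in coordinates $k, k+1$ (indeed, $\hat e_k$ commutes with $y_i$ for $i \neq k,k+1$ by \ref{item:10}, and by \ref{item:21}--\ref{item:22} it forces $y_k + y_{k+1}$ to act as $0$, so it can only introduce the eigenvalue pair summing to $0$, which is fine). Handling this is routine once one sets up the induction on $N$ correctly, but it is the step where care is needed; alternatively, one can avoid it entirely by noting that $\idem_\bolda M_\boldi + \idem_\bolda M_{\sfs_k\boldi}$ is, for the purposes of this lemma, allowed to be enlarged to a sum over all tuples agreeing with $\boldi$ outside $\{k,k+1\}$ only if $\rmi_k + \rmi_{k+1}$ is fixed — but in fact the statement as given is the sharp one, so the inductive argument is unavoidable. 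I therefore expect the write-up to consist of: (i) the orientation reduction via \ref{item:28}/\ref{item:29}; (ii) the easy observation about $i \notin\{k,k+1\}$ via \ref{item:5}; (iii) the inductive eigenvalue estimate in the $(k,k+1)$-plane using \ref{item:18} or \ref{item:19}.
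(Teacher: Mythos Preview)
Your steps (i) and (ii) are fine, but note first that the lemma concerns only \(s_k\), which by the paper's conventions vanishes unless \(a_k=a_{k+1}\); the case \(a_k\neq a_{k+1}\) (where \(\hat s_k\) and \(\hat e_k\) enter) is handled separately in Lemma~\ref{lem:2}, so the discussion of \(\hat e_k\)--correction terms is irrelevant here. When \(a_k=a_{k+1}\) the correction in \ref{item:18} is just \(\pm\idem_\bolda\).

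Step (iii) has a genuine gap. The claim that \((y_k-\rmi_{k+1})^N(y_{k+1}-\rmi_k)^N s_k v\) is eventually zero is neither what the commutation relations produce nor sufficient for the conclusion (such vanishing would only force \(\rmi'_k=\rmi_{k+1}\) \emph{or} \(\rmi'_{k+1}=\rmi_k\) on each component, allowing spurious eigenvalue pairs). What \ref{item:18} actually gives, by induction, is
\[
(y_k-\rmi_{k+1})^N s_k v \;=\; s_k (y_{k+1}-\rmi_{k+1})^N v \;-\; \sum_{j=0}^{N-1}(y_k-\rmi_{k+1})^j(y_{k+1}-\rmi_{k+1})^{N-1-j}v,
\]
so for \(N\gg 0\) one obtains \((y_k-\rmi_{k+1})^N s_k v\in \idem_\bolda M_\boldi\), not \(0\). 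The symmetric estimate \((y_{k+1}-\rmi_k)^N s_k v\in \idem_\bolda M_\boldi\) follows the same way. These two statements, together with (ii), do yield the lemma: any component of \(s_k v\) in \(\idem_\bolda M_{\boldi'}\) with \(\boldi'\neq\boldi\) must satisfy \(\rmi'_k=\rmi_{k+1}\) and \(\rmi'_{k+1}=\rmi_k\), hence \(\boldi'=\sfs_k\boldi\).

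The paper organizes the argument differently: it first treats a proper simultaneous eigenvector \(v\) by writing the action of \(s_k,y_k,y_{k+1}\) on \(\operatorname{span}\{v,s_kv\}\) as explicit \(2\times 2\) matrices, and then handles the general case by filtering \(\idem_\bolda M_\boldi+\idem_\bolda M_{\sfs_k\boldi}\) by nilpotency degree and applying the eigenvector case to each successive quotient. Once your step (iii) is corrected as above, your route is more direct and avoids the filtration; the paper's version, on the other hand, makes the structure of \(s_k v\) in the base case completely explicit.
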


\begin{proof}
  Without loss of generality let \(k=1\) and \(r+t=2\).   Let \(B \subseteq \uVB_{r,t}(\bomega)\) be the subalgebra generated by \(s=s_1\idem_\bolda\), \(y_1 \idem_\bolda\), \(y_{2}\idem_\bolda\). Consider first the case of a vector \(v\) with proper eigenvalues (not just generalized eigenvalues) \(\rmi_1\) and \(\rmi_{2}\) for \(y_1\) and \(y_{2}\), respectively. Thanks to \ref{item:18}. the vectors \(v, s v\) span a \(B\)--submodule of \(M\). Let us assume that they are linearly independent (otherwise the claim is obvious). The action of the elements \(s\), \(y_1\) and \(y_{2}\) in the basis \(\{v, s v\}\) is then given by the matrices
  \begin{equation}
    \label{eq:6}
    s = \twomatrix{0}{1}{1}{0}, \qquad y_1 = \twomatrix{\rmi_1}{-1}{0}{\rmi_{2}}, \qquad y_{2} = \twomatrix{\rmi_{2}}{1}{0}{\rmi_1}.
  \end{equation}
  This implies that \(s v\) is a sum of two simultaneous eigenvectors for \(y_1,y_{2}\) with eigenvalues \(\rmi_1,\rmi_{2}\) and \(\rmi_{2},\rmi_1\), respectively.

Let us now turn to the general case.
 For \(N \geq 0\) and \(\boldi \in \C^2\) let \(\idem_\bolda M_\boldi^{(N)} = \{w \in \idem_\bolda M_\boldi \suchthat (y_1 - \rmi_1)^N w =0= (y_2- \rmi_2)^N w\}\). Now fix \(\boldi\) and let \(V^{(N)} = \idem_\bolda M_{\boldi}^{(N)} + \idem_\bolda M_{\sfs_1 \boldi}^{(N)}\). We show by induction that \(V^{(N)}\) is a \(B\)--submodule of \(\idem_\bolda M\), from which  our assertion follows. The case \(N=0\) is obvious and \(N=1\) is done already. So pick a vector \(v \in V^{(N+1)}\) but \(v \notin V^{(N)}\). Consider the quotient module \(\idem_\bolda M/V^{(N)}\). The image \(\overline v\) of \(v\) in this quotient is an eigenvector with eigenvalues \(\rmi_1,\rmi_2\). Hence we can apply the first paragraph of the proof to \(\overline v\), and the claim follows.
\end{proof}

\begin{lemma}
  \label{lem:3}
  For all \(\bolda \in \Seq_{r,t}\) and \(1 \leq k \leq r+t\) such that \(\sfs_k \bolda \neq \bolda\) we have
  \begin{equation}
    \label{eq:7}
    e_k \idem_\bolda M_\boldi \subseteq
    \begin{cases}
      \{0\} & \text{if } \rmi_k + \rmi_{k+1} \neq 0,\\
            \bigoplus_{\boldi'
        \in \rmI} \idem_\bolda M_{\boldi'} & \text{if } \rmi_k + \rmi_{k+1} = 0,
    \end{cases}
  \end{equation}
  where \(\rmI=\{\boldi' \in \C^{r+t} \suchthat \rmi'_j=\rmi_j \text{ for }j \neq k,k+1 \text{ and } \rmi_k + \rmi_{k+1} =0\} \). Analogously
  \begin{equation}
    \label{eq:8}
    \hat e_k \idem_\bolda M_\boldi \subseteq
    \begin{cases}
      \{0\} & \text{if } \rmi_k + \rmi_{k+1} \neq 0,\\
            \bigoplus_{\boldi'
        \in \rmI} \idem_{\sfs_k \bolda} M_{\boldi'} & \text{if } \rmi_k + \rmi_{k+1} = 0.
    \end{cases}
  \end{equation}
\end{lemma}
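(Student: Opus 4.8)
My plan is to read everything off from how $\dot e_k$ interacts with the commuting family $y_1,\dots,y_{r+t}$, with no induction needed (in contrast to Lemma~\ref{lem:1}), since relations~\ref{item:21} and~\ref{item:22} say that $\dot e_k$ lands in, and is annihilated by, the operator $y_k+y_{k+1}$ on the nose. Concretely the ingredients will be: relation~\ref{item:10}, i.e.\ $\dot e_k y_j = y_j \dot e_k$ for $j\neq k,k+1$; relations~\ref{item:21} and~\ref{item:22}, i.e.\ $\dot e_k(y_k+y_{k+1})=0=(y_k+y_{k+1})\dot e_k$; and the idempotent relations~\ref{item:28}--\ref{item:29}, which give $e_k\idem_\bolda M\subseteq\idem_\bolda M$ and $\hat e_k\idem_\bolda M\subseteq\idem_{\sfs_k\bolda}M$.

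First I would record an elementary linear-algebra observation: since the $y_j$ commute with one another and with every $\idem_\bolda$, each generalized simultaneous eigenspace $\idem_\bolda M_\boldi$ is stable under all the $y_j$, and $y_k+y_{k+1}$ acts on $\idem_\bolda M_\boldi$ with the single generalized eigenvalue $\rmi_k+\rmi_{k+1}$ --- simultaneously triangularize the commuting pair $y_k,y_{k+1}$ on $\idem_\bolda M_\boldi$, whose diagonal entries are identically $\rmi_k$ and $\rmi_{k+1}$ respectively. Hence, whenever $\rmi_k+\rmi_{k+1}\neq0$, the operator $y_k+y_{k+1}$ is invertible on $\idem_\bolda M_\boldi$.

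The vanishing case then takes one line: for $v\in\idem_\bolda M_\boldi$ with $\rmi_k+\rmi_{k+1}\neq0$, use the invertibility above to write $v=(y_k+y_{k+1})v'$ with $v'\in\idem_\bolda M_\boldi$; then $e_kv=e_k(y_k+y_{k+1})v'=0$ by~\ref{item:21}, and likewise $\hat e_kv=0$, giving the first line of each of~\eqref{eq:7} and~\eqref{eq:8}. For the remaining case I would fix $v\in\idem_\bolda M_\boldi$ with $\rmi_k+\rmi_{k+1}=0$ and set $w=e_kv$; by~\ref{item:28} we have $w\in\idem_\bolda M$, and for $j\neq k,k+1$ we get $(y_j-\rmi_j)^Nw=e_k(y_j-\rmi_j)^Nv=0$ for $N\gg0$ using~\ref{item:10}, so in the decomposition~\eqref{eq:4} the vector $w$ involves only summands $\idem_\bolda M_{\boldi''}$ with $\rmi''_j=\rmi_j$ for all $j\neq k,k+1$. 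Applying $y_k+y_{k+1}$, which annihilates $w$ by~\ref{item:22} but is invertible on each such summand with $\rmi''_k+\rmi''_{k+1}\neq0$, kills all of those components, leaving $w\in\bigoplus_{\boldi'\in\rmI}\idem_\bolda M_{\boldi'}$, i.e.\ \eqref{eq:7}. The computation for $\hat e_k$ is word for word the same, except that~\ref{item:29} places $\hat e_kv$ in $\idem_{\sfs_k\bolda}M$ from the start, which yields~\eqref{eq:8}.

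I do not expect a genuine obstacle. The only mildly delicate point is the linear-algebra step asserting that $y_k+y_{k+1}$ has a single generalized eigenvalue on $\idem_\bolda M_\boldi$, hence is invertible there when that eigenvalue is nonzero; beyond that, everything is formal manipulation of the defining relations inside the fixed finite-dimensional summand $\idem_\bolda M$, where the decomposition~\eqref{eq:4} is available.
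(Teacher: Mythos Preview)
Your proof is correct and follows the same approach as the paper, which simply records that the lemma ``is an immediate consequence of relation~\ref{item:21}.'' You have spelled out in full the details the paper leaves to the reader: the invertibility of $y_k+y_{k+1}$ on $\idem_\bolda M_\boldi$ when $\rmi_k+\rmi_{k+1}\neq 0$, the use of~\ref{item:10} to fix the coordinates $\rmi_j$ for $j\neq k,k+1$, and the use of~\ref{item:22} to kill the components with $\rmi''_k+\rmi''_{k+1}\neq 0$ in the image.
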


\begin{proof}
  This is an immediate consequence of relation \ref{item:21}.
\end{proof}

\begin{lemma}
  \label{lem:2}
  For all \(\bolda \in \Seq_{r,t}\) and \(1 \leq k \leq r+t\) such that \(\sfs_k \bolda \neq \bolda\) we have
  \begin{equation}
    \label{eq:9}
    \hat s_k \idem_\bolda M_\boldi \subseteq
    \begin{cases}
      \idem_{\sfs_k \bolda} M_{\sfs_k \boldi} & \text{if } \rmi_k + \rmi_{k+1} \neq 0,\\
            \bigoplus_{\boldi'
        \in \rmI} \idem_{\sfs_k \bolda} M_{\boldi'} & \text{if } \rmi_k+ \rmi_{k+1} = 0,
    \end{cases}
  \end{equation}
  where as before \(\rmI=\{\boldi' \in \C^{r+t} \suchthat \rmi'_j = \rmi_j \text{ for }j \neq k,k+1 \text{ and } \rmi'_k + \rmi'_{k+1}=0\} \).
\end{lemma}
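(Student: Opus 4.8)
The plan is to follow the pattern of the proof of Lemma~\ref{lem:1}, feeding in Lemma~\ref{lem:3}. First I would reduce to the case $k=1$ and $r+t=2$, exactly as in Lemma~\ref{lem:1}: by relation~\ref{item:5} the element $\hat s_k$ commutes with every $y_j$ for $j\neq k,k+1$, so it preserves their simultaneous generalized eigenspaces, and by relation~\ref{item:29} it sends $\idem_\bolda M$ into $\idem_{\sfs_k\bolda}M$, where $\sfs_k\bolda$ and $\bolda$ agree away from positions $k,k+1$. Hence the eigenvalues $\rmi_j$ for $j\neq k,k+1$ are preserved, and only the two strands $k,k+1$ matter; what remains is to locate, for $v\in\idem_\bolda M_\boldi$, the generalized $(y_1,y_2)$-eigenvalue of $\hat s_1 v\in\idem_{\sfs_1\bolda}M$.

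The key point is that adding the two identities in relation~\ref{item:19} gives $\hat s_1(y_1+y_2)=(y_1+y_2)\hat s_1$; that is, $\hat s_1$ commutes with $z:=y_1+y_2$. Since $y_1$ and $y_2$ commute and $v$ is a generalized eigenvector for $y_1$, resp.\ $y_2$, with eigenvalue $\rmi_1$, resp.\ $\rmi_2$, it is a generalized $z$-eigenvector with eigenvalue $\rmi_1+\rmi_2$, and therefore so is $\hat s_1 v$. Now the generalized $z$-eigenspace decomposition of $M$ is the coarsening of the $(y_1,y_2)$-decomposition obtained by grouping the spaces $\idem_{\sfs_1\bolda}M_{\boldi'}$ according to the value of $\rmi'_1+\rmi'_2$; so, expanding $\hat s_1 v$ into its components in the $\idem_{\sfs_1\bolda}M_{\boldi'}$, every component with $\rmi'_1+\rmi'_2\neq\rmi_1+\rmi_2$ must vanish. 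When $\rmi_1+\rmi_2=0$ this is precisely the second case of the statement (the set $\rmI$ being $\{\boldi'\in\C^2\suchthat\rmi'_1+\rmi'_2=0\}$ here).

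For the first case, $\rmi_1+\rmi_2\neq0$, the extra ingredient is Lemma~\ref{lem:3}, which gives $\hat e_1\,\idem_\bolda M_\boldi=\{0\}$. Writing relation~\ref{item:19} in the form $y_2\hat s_1=\hat s_1 y_1-\hat e_1$ and $y_1\hat s_1=\hat s_1 y_2+\hat e_1$, and using that $y_1$ and $y_2$ preserve $\idem_\bolda M_\boldi$ so that the $\hat e_1$-terms are annihilated at each step, a straightforward induction on $N$ yields $(y_2-\rmi_1)^N\hat s_1 v=\hat s_1(y_1-\rmi_1)^N v$ and $(y_1-\rmi_2)^N\hat s_1 v=\hat s_1(y_2-\rmi_2)^N v$ for $v\in\idem_\bolda M_\boldi$. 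Choosing $N\gg0$ so that $(y_1-\rmi_1)^N v=(y_2-\rmi_2)^N v=0$ makes both right-hand sides vanish, so $\hat s_1 v$ has generalized $(y_1,y_2)$-eigenvalue $(\rmi_2,\rmi_1)=\sfs_1\boldi$, i.e.\ $\hat s_1 v\in\idem_{\sfs_1\bolda}M_{\sfs_1\boldi}$, as required.

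I do not anticipate a serious obstacle: the argument runs parallel to Lemma~\ref{lem:1}, with the observation that $\hat s_k$ commutes with $y_k+y_{k+1}$ replacing the explicit $2\times2$ matrix computation there. The points that need a little attention are the reduction to $r+t=2$ (legitimate thanks to relations~\ref{item:5} and~\ref{item:29}) and the bookkeeping of the idempotents when invoking relation~\ref{item:19}, recalling $\hat s_k\idem_\bolda=\idem_{\sfs_k\bolda}\hat s_k\idem_\bolda$ and likewise for $\hat e_k$.
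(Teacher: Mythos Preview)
Your argument is correct. The reduction to $k=1$, $r+t=2$ and the treatment of the case $\rmi_1+\rmi_2\neq 0$ match the paper's proof essentially verbatim: both use Lemma~\ref{lem:3} to kill the $\hat e_1$-terms in relation~\ref{item:19}, so that $\hat s_1$ intertwines $y_1$ and $y_2$ on $\idem_\bolda M_\boldi$, and then iterate.

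Where you genuinely diverge is the case $\rmi_1+\rmi_2=0$. The paper introduces the submodule $E$ generated by the images of $e_1$ and $\hat e_1$, shows $E\subseteq\bigoplus_{\boldi'\in\rmI}(\idem_\bolda M_{\boldi'}+\idem_{\sfs_1\bolda}M_{\boldi'})$, and then works in the quotient $M/E$, invoking a commutation formula from \cite[eq.~(2.12)]{MR3244645} to conclude that $\hat s_1\bar v$ has generalized eigenvalue $(-\rmi_1,\rmi_1)$. Your route is cleaner: adding the two identities in \ref{item:19} gives $[\hat s_1,\,y_1+y_2]=0$ outright, so $\hat s_1$ preserves the generalized eigenspace of $z=y_1+y_2$ for the eigenvalue $0$, which is exactly $\bigoplus_{\boldi'\in\rmI}\idem_{\sfs_1\bolda}M_{\boldi'}$. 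This avoids both the auxiliary submodule $E$ and the external reference, at no cost in generality. The paper's detour does produce the slightly sharper statement that the image lands in $\idem_{\sfs_1\bolda}M_{(-\rmi_1,\rmi_1)}+E$, but since $E$ is already spread over all of $\rmI$ this gains nothing for the lemma as stated.
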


\begin{proof}
  Again we may assume \(k=1\) and \(r=t=1\), and we remove subscripts from \(\hat s_1\), \(e_1\) and \(\hat e_1\). Note that in this case \(\rmI=\{(\rmi',-\rmi') \in \C^2\}\). Let \(v \in \idem_\bolda M_\boldi\). First, suppose \(\rmi_1 + \rmi_{2} \neq 0\). Then by relation \ref{item:19} and Lemma~\ref{lem:3} we have \(y_1\hat s v = \hat s y_2 v\) and \(y_2 \hat s v = y_1 \hat s v\), which implies that \(\hat s v \in \idem_{\sfs_1 \bolda} M_{\sfs_1 \boldi}\).

Now suppose that we are in the second case, that is \(\rmi_1+\rmi_2=0\). Let also \(E   \subseteq M\) be the submodule generated by the images of \(e\) and \(\hat e\). Note that by relations~\ref{item:br:13} and \ref{item:19} each element of \(E\) can be written as \(p(y_1) e w' + q(y_1) \hat e w''\) with \(p,q \in \C[y_1]\), and hence it follows by Lemma~\ref{lem:3} that \( E \subseteq \bigoplus_{\boldi' \in \rmI} \big(\idem_\bolda M_{\boldi'} + \idem_{\sfs_1 \bolda} M_{\boldi'}\big)\). Consider now the image \(\bar v\) of \(v\) in \(M/E\). Suppose \((y_1-\rmi_1)^N v= 0=(y_2+\rmi_1)^N v\). Then by \cite[eq.~(2.12)]{MR3244645} we have \((y_1+\rmi_1)^N \hat s \bar v = \hat s (y_2 +\rmi_1)^N \bar v=0\) and similarly \((y_2-\rmi_1)^N \hat s \bar v = \hat s (y_1 -\rmi_1)^N \bar v = 0\), whence \(\hat s \bar v\) has generalized eigenvalues \((-\rmi_1,\rmi_1)\). It follows that \(\hat s v \in \idem_{\sfs_1 \bolda} M_{(-\rmi_1,\rmi_1)} + E\), and we are done.
\end{proof}

We end this section with a rather technical result which will be important for the proof of our main theorem.

\begin{lemma}\label{lem:9}
  Let \(M\) be a finite-dimensional \(\uVB_{r,t}(\bomega)\)--module, and let \(\idem_\bolda M_\boldi\) be a generalized eigenspace of \(M\). For an index \(k\) suppose that either 
  \begin{enumerate}[(a)]
  \item \label{item:4} \(a_k=a_{k+1}\), \(\rmi_{k+1} \neq \rmi_k, \rmi_k \pm 1\)  and \((y_k - \rmi_{k+1}) \idem_\bolda M_{\sfs_k\boldi} = 0\), or
  \item \label{item:6} \(a_k \neq a_{k+1}\), \(\rmi_{k} + \rmi_{k+1} \neq 0\)  and \((y_k - \rmi_{k+1}) \idem_{\sfs_k \bolda} M_{\sfs_k\boldi} = 0\). 
  \end{enumerate}
  Then \((y_{k+1}-\rmi_{k+1}) \idem_\bolda M_\boldi = 0\), that is \(y_{k+1}\) has proper eigenvalue \(\rmi_{k+1}\) on \(\idem_\bolda M_\boldi\).
\end{lemma}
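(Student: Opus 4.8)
The plan is to establish the two cases separately, in each case reducing---exactly as in the proofs of Lemmas~\ref{lem:1} and~\ref{lem:2}---to $k=1$ and $r+t=2$. Then $\bolda$ has length two, I may drop the subscript $1$ from $s,\hat s,e,\hat e$, and I write $\boldi=(\rmi_1,\rmi_2)$, so that $\sfs_1\boldi=(\rmi_2,\rmi_1)$. Fix $v\in\idem_\bolda M_\boldi$; the aim is to show $(y_2-\rmi_2)v=0$.

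\textbf{Case (b).} Here $a_1\neq a_2$, hence $s\idem_\bolda=0$, and relation~\ref{item:br:1} becomes $\hat s^2\idem_\bolda=\idem_\bolda$; thus $\hat s$ restricts to an isomorphism $\idem_\bolda M\to\idem_{\sfs_1\bolda}M$ with inverse again $\hat s$, and in particular $\hat s$ is injective on $\idem_\bolda M$. Since $\rmi_1+\rmi_2\neq0$, Lemma~\ref{lem:3} gives $\hat e v=0$, so feeding this into relation~\ref{item:19} yields $\hat s y_2 v=y_1\hat s v$; moreover Lemma~\ref{lem:2} puts $\hat s v\in\idem_{\sfs_1\bolda}M_{\sfs_1\boldi}$. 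The hypothesis $(y_1-\rmi_2)\idem_{\sfs_1\bolda}M_{\sfs_1\boldi}=0$ therefore gives $0=(y_1-\rmi_2)\hat s v=\hat s(y_2-\rmi_2)v$, and injectivity of $\hat s$ forces $(y_2-\rmi_2)v=0$. I expect this case to be routine.

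\textbf{Case (a).} Now $a_1=a_2$, so $\sfs_1\bolda=\bolda$, relation~\ref{item:br:1} gives $s^2\idem_\bolda=\idem_\bolda$, and relation~\ref{item:18} reads $sy_1-y_2s=-\idem_\bolda$ and $sy_2-y_1s=\idem_\bolda$ on $\idem_\bolda M$. By Lemma~\ref{lem:1} the subspace $P=\idem_\bolda M_\boldi\oplus\idem_\bolda M_{\sfs_1\boldi}$ is stable under $s,y_1,y_2$, and $y_1-y_2$ is invertible on $P$ because $\rmi_1\neq\rmi_2$. The key step is to introduce the intertwiner $\phi=s+(y_1-y_2)^{-1}$ on $P$ and to verify, from the relations above together with their consequence $s(y_1-y_2)=-(y_1-y_2)s-2\idem_\bolda$, the identities
\[
\phi y_1=y_2\phi ,\qquad \phi y_2=y_1\phi ,\qquad \phi^{2}=\idem_\bolda-(y_1-y_2)^{-2}.
\]
The first two identities show that $\phi$ carries $\idem_\bolda M_\boldi$ into $\idem_\bolda M_{\sfs_1\boldi}$, so the hypothesis gives $0=(y_1-\rmi_2)\phi v=\phi(y_2-\rmi_2)v$, with $(y_2-\rmi_2)v\in\idem_\bolda M_\boldi$. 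On $\idem_\bolda M_\boldi$ the operator $\phi^{2}=\idem_\bolda-(y_1-y_2)^{-2}$ differs from multiplication by the scalar $1-(\rmi_1-\rmi_2)^{-2}$ by a nilpotent operator, and this scalar is nonzero precisely because $\rmi_2\neq\rmi_1\pm1$; hence $\phi^{2}$, and so $\phi$, is injective on $\idem_\bolda M_\boldi$, and $(y_2-\rmi_2)v=0$ follows.

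The main obstacle is Case (a)---finding $\phi$ and checking the three displayed identities. The identity $\phi y_1=y_2\phi$ holds because $(y_1-y_2)^{-1}y_1-y_2(y_1-y_2)^{-1}=\idem_\bolda$ exactly cancels the inhomogeneous term of relation~\ref{item:18}, and the formula for $\phi^{2}$ rests on $s(y_1-y_2)^{-1}+(y_1-y_2)^{-1}s=-2(y_1-y_2)^{-2}$, obtained by conjugating $s(y_1-y_2)+(y_1-y_2)s=-2\idem_\bolda$ by $(y_1-y_2)^{-1}$. The conceptual point I would highlight is that the two excluded values $\rmi_{k+1}=\rmi_k\pm1$ are exactly those making $\phi^{2}$ non-invertible on $\idem_\bolda M_\boldi$, so that hypothesis (a) is used in full, whereas the weaker requirement $\rmi_{k+1}\neq\rmi_k$ only serves to make sense of $(y_1-y_2)^{-1}$.
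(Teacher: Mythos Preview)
Your proof is correct. Case~(b) is essentially identical to the paper's argument. For Case~(a), however, you take a genuinely different route: you introduce the standard intertwiner \(\phi = s + (y_1-y_2)^{-1}\) from degenerate affine Hecke algebra theory, which exactly swaps the generalized eigenvalues and whose square \(\phi^2 = 1-(y_1-y_2)^{-2}\) is invertible on \(\idem_\bolda M_\boldi\) precisely under the hypothesis \(\rmi_2\neq\rmi_1\pm1\). The paper instead argues by contradiction: assuming \(w=(y_{k+1}-\rmi_{k+1})v\neq0\), it applies the hypothesis to \(s_k v\) and iterates the commutation relation \cite[Lemma~2.9]{MR3244645} to obtain \((y_k-\rmi_{k+1})^N w = (y_k-\rmi_{k+1})^{N-2}w\) for \(N\gg0\), forcing \(\rmi_{k+1}\pm1\) to be an eigenvalue of \(y_k\), a contradiction. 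Your approach is cleaner and makes the role of the excluded values \(\rmi_k\pm1\) transparent from the outset, at the cost of needing \((y_1-y_2)^{-1}\) to exist on \(P=\idem_\bolda M_\boldi\oplus\idem_\bolda M_{\sfs_1\boldi}\); the paper's argument avoids inverting anything but is more computational and the appearance of \(\rmi_k\pm1\) only emerges at the end.
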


\begin{proof}
  By contradiction suppose that there is a nonzero vector \(v \in \idem_\bolda M_\boldi\) with \(w=(y_{k+1}-\rmi_{k+1})v \neq 0\). We can assume \((y_{k+1}- \rmi_{k+1})^2 v = 0\). First suppose we are in case~\ref{item:4}. Then by Lemma~\ref{lem:1} we know that \(s_{k} v \in  \idem_\bolda M_\boldi + \idem_\bolda M_{\sfs_k \boldi}\). 
By hypothesis we have \((y_k-\rmi_{k+1})(y_{k+1}-\rmi_{k+1})^N s_k v = 0\) for all \(N \gg 0\). But then we also have
  \begin{equation}
    \label{eq:58}
    \begin{aligned}
    0 & = (y_{k+1} - \rmi_{k+1})^N (y_k - \rmi_{k+1}) s_k v
   \\ &= (y_{k+1} - \rmi_{k+1})^N s_k (y_{k+1} - \rmi_{k+1})  v - (y_{k+1} - \rmi_{k+1})^N  v\\
    &  = s_k (y_k - \rmi_{k+1})^N w + \sum_{l+h=N-1} (y_{k+1}- \rmi_{k+1})^l (y_k - \rmi_{k+1})^h w \\
    &  = s_k (y_k - \rmi_{k+1})^N w +   (y_k - \rmi_{k+1})^{N-1} w ,
  \end{aligned}
\end{equation}
where for the third equality we used the relation from \cite[Lemma~2.9]{MR3244645}. Applying \(s_k\) we get \((y_k - \rmi_{k+1})^N w = -s_k (y_k - \rmi_{k+1})^{N-1} w\) for all \(N \gg 0\), and hence  \((y_k - \rmi_{k+1})^N w =  (y_k - \rmi_{k+1})^{N-2} w\). Recall that \(w \neq 0\), and note that since \(\rmi_{k+1}\) is different from \(\rmi_k\), which is the generalized eigenvalue of \(y_k\) on \(w\), the vectors \((y_k - \rmi_{k+1})^N w\) are always nonzero for all \(N\). In particular, for \(N \gg 0\), setting \(z = (y_k - \rmi_{k+1})^{N-2} w\) we get \((y_k - \rmi_{k+1})^2 z = z\). This implies that \(\pm 1\) is an eigenvalue of \((y_k - \rmi_{k+1})\), hence \(\rmi_{k+1} \pm 1\) is an eigenvalue of \(y_k\) on \(\idem_\bolda M_\boldi\). But since we are assuming \(\rmi_k \neq \rmi_{k+1} \pm 1\), this is a contradiction.

Now let us suppose we are in case~\ref{item:6}.
Then by Lemma~\ref{lem:2} above we have \(\hat s_k v\in \idem_{\sfs_k\bolda} M_{\sfs_k \boldi}\).
By hypothesis, \(y_k\) has proper eigenvalue \(\rmi_{k+1}\)  on \(\idem_{\sfs_k \bolda} M_{\sfs_k \boldi}\), hence \((y_k - \rmi_{k+1})\hat s_k v = 0\). But \((y_k - \rmi_{k+1})\hat s_k v = \hat s_k (y_{k+1}- \rmi_{k+1}) v + \hat e_k v = \hat s_k (y_{k+1}- \rmi_{k+1}) v\) by Lemma~\ref{lem:3}. Hence \(\hat s_k (y_{k+1}- \rmi_{k+1}) v = 0\) and also \((y_{k+1}- \rmi_{k+1})v = 0\), which gives a contradiction.
\end{proof}

\section{Inverses and square roots in finite-dimensional algebras}
\label{sec:invers-square-roots}

We summarize a few easy, but important facts which tell us when we are allowed to take inverses and square roots of elements of a finite dimensional algebra. We start by considering quotients of a polynomial ring.

\begin{lemma}
  \label{lem:4}
  Let \(f(x) \in \C[x]\), \(a \in \C\) and \(j\geq 0\). Then \(f(x)\) is invertible in \(\C[x]/(x-a)^j\)  if and only if \((x-a)\nmid f(x)\).
Moreover, in this case \(f(x)\) has  a square root in \(\C[x]/(x-a)^j\).
\end{lemma}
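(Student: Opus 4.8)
The plan is to prove both claims by reducing to the structure of $R = \C[x]/(x-a)^j$ as a local ring. After the affine shift $y = x-a$ (an algebra isomorphism $\C[x]/(x-a)^j \cong \C[y]/(y^j)$), $R$ becomes a local ring with maximal ideal $\mathfrak{m} = (y)$ which is nilpotent ($\mathfrak{m}^j = 0$), and residue field $R/\mathfrak{m} \cong \C$. First I would observe the general principle: in a commutative ring with nilpotent maximal ideal, an element is a unit if and only if its image in the residue field is nonzero. For one direction, if $(x-a) \mid f(x)$ then $f(a) = 0$, so the image of $f$ in $R/\mathfrak{m} \cong \C$ vanishes, hence $f$ lies in $\mathfrak{m}$, which consists of nilpotents (hence non-units in a nonzero ring, and note $R \neq 0$ since $j \geq 1$; if $j = 0$ then $R = 0$ and everything is trivially invertible, but also $(x-a) \nmid f$ fails only when... — actually for $j=0$ one should just note $R=0$ so the statement holds vacuously on both sides, or handle $j\ge 1$ as the substantive case). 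For the converse, if $(x-a) \nmid f(x)$, write $f(x) = c + (x-a)g(x)$ with $c = f(a) \neq 0$; then in $R$ the element $(x-a)g(x)$ is nilpotent (some power of $(x-a)$ is zero), so $f = c(1 + c^{-1}(x-a)g(x))$ is a unit times $(1 + \text{nilpotent})$, and $1 + n$ is invertible with inverse $\sum_{i \geq 0} (-n)^i$ (a finite sum).

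For the square root claim, assuming $(x-a) \nmid f$ so that $c = f(a) \neq 0$, I would again write $f = c(1 + n)$ with $n$ nilpotent, say $n^j = 0$. It suffices to find square roots of $c$ and of $1 + n$ separately and multiply them. A square root of $c$ exists in $\C$ (pick either one). For $1 + n$, the binomial series $\sum_{i \geq 0} \binom{1/2}{i} n^i$ terminates because $n$ is nilpotent, giving a well-defined element of $R$; squaring it and using the formal identity $\bigl(\sum_i \binom{1/2}{i} T^i\bigr)^2 = 1 + T$ in the ring $\C[[T]]$ (or just in $\C[T]/(T^j)$) — which specializes under $T \mapsto n$ — shows its square is $1 + n$. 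Alternatively, and perhaps cleaner to write, one can solve $(1 + m)^2 = 1 + n$ for $m \in \mathfrak{m}$ by successive approximation: set $m_0 = 0$ and $m_{\ell+1} = \tfrac{1}{2}(n - m_\ell^2)$, which converges $\mathfrak{m}$-adically in finitely many steps since $m_{\ell+1} - m_\ell \in \mathfrak{m}^{\ell+1}$, and the limit satisfies $2m + m^2 = n$. Either argument gives $\sqrt{1+n} \in R$, and then $\sqrt{f} = \sqrt{c}\,\sqrt{1+n}$.

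The main obstacle is essentially bookkeeping rather than conceptual: one must be careful that all the infinite series in play (the Neumann series for the inverse, the binomial series for the square root) genuinely terminate, which rests on the single fact that $x - a$ is nilpotent modulo $(x-a)^j$ — specifically $(x-a)^j \equiv 0$. I expect the cleanest writeup to isolate this as a remark ("any element of $\mathfrak{m}$ is nilpotent, so $1 + \mathfrak{m} \subseteq R^\times$ and the squaring map $1 + \mathfrak{m} \to 1 + \mathfrak{m}$ is a bijection"), after which both statements of the lemma follow immediately. I would also note in passing, since it is used later, that this shows more: the square root, once a sign is chosen on the constant term, is \emph{unique} in $1 + \mathfrak{m}$, because if $u^2 = v^2$ with $u, v \in 1 + \mathfrak{m}$ then $(u-v)(u+v) = 0$ and $u + v \in 2 + \mathfrak{m}$ is a unit, forcing $u = v$ — this uniqueness is what makes the expressions in \eqref{eq:39} well-defined.
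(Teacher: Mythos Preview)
Your proposal is correct and follows essentially the same approach as the paper: the invertibility claim is reduced to the fact that \(\C[x]/(x-a)^j\) is local, and the square root is produced via the binomial (Taylor) series for \(\sqrt{1+t}\) applied to a nilpotent element. Your extra uniqueness remark and the Hensel-type alternative are nice additions but not needed for the lemma as stated.
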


\begin{proof}
  The first claim is clear, since \(\C[x]/(x-a)^j\) is a local ring. Hence assume  \(f(x)\) is invertible in \(\C[x]/(x-a)^j\). By  translation it suffices to consider the case \(a = 0\). Since \(x \nmid f(x)\), we can suppose, up to a multiple, that \(f(x) = 1 + g(x)\) with \(x \mid g(x)\). Consider the formal power series 
  \begin{equation}
    \label{eq:10}
\sum_{n=0}^{\infty}\frac{(-1)^n(2n)!}{(1-2n)(n!)^2 4^n}t^n = 1 + \frac{1}{2}t - \frac{1}{8} t^2 + \frac{1}{16} t^3 - \frac{5}{128} t^4 + \cdots
  \end{equation}
 in \(\C[[t]]\). The expression~\eqref{eq:10} gives an explicit square root of \(1+t\). In \(\C[[t]]/t^j \cong \C[t]/t^j\) the sum becomes finite, and \eqref{eq:10} still gives a square root of \(1+t\). Substituting \(t=g(x)\) in \eqref{eq:10}, we  have a finite sum in \(\C[x]/x^j\) which squares to \(1+g(x)=f(x)\).
\end{proof}

The previous lemmas can be applied to arbitrary finite-dimensional algebra:

\begin{proposition}\label{prop:2}
Let still \(f(x) \in \C[x]\). Assume \(B\) is a finite-dimensional algebra, and let
  \(x_0 \in B\). Suppose that \((x-a) \nmid f(x)\) for all \(a \in \C\) which are generalized eigenvalues for the action of \(x_0\) on the regular representation. Then \(f(x_0)\in B\) has a (unique) inverse and a (non-unique) square root.
\end{proposition}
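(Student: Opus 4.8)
The plan is to reduce the statement about an arbitrary finite-dimensional algebra $B$ to the polynomial-quotient situation of Lemma~\ref{lem:4} by working inside the commutative subalgebra generated by $x_0$. First I would observe that $f(x_0)$, its putative inverse, and its putative square root all lie in $\C[x_0] \subseteq B$, so it suffices to prove the assertions inside the commutative finite-dimensional algebra $A := \C[x_0]$. Writing $A \cong \C[x]/(m(x))$ where $m(x)$ is the minimal polynomial of $x_0$, and factoring $m(x) = \prod_{a} (x-a)^{j_a}$ over the (finitely many) roots $a$, the Chinese Remainder Theorem gives $A \cong \prod_a \C[x]/(x-a)^{j_a}$. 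The roots $a$ appearing here are precisely the generalized eigenvalues of $x_0$ acting on $A$ by multiplication, and these coincide with the generalized eigenvalues of $x_0$ on the regular representation of $B$ (since the regular representation of $B$ restricted to $\C[x_0]$ is a direct sum of copies of $A$ as a module over itself, up to Jordan structure — in any case the set of eigenvalues is the same).

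Next, by hypothesis $(x-a) \nmid f(x)$ for each such $a$, so Lemma~\ref{lem:4} applies in each factor $\C[x]/(x-a)^{j_a}$: there $f(x)$ is invertible and admits a square root. Assembling these componentwise through the CRT isomorphism produces an inverse and a square root of $f(x_0)$ in $A$, hence in $B$. Uniqueness of the inverse is automatic in any associative unital ring. For the final remark that the square root need not be unique: in $\C[x]/(x-a)^{j_a}$ one may negate the square root in each factor independently, so as soon as there is more than one generalized eigenvalue (or one can simply note $1 = (-1)^2$ already in $\C$ itself when $B = \C$) there are several square roots; this is only an aside and needs no detailed argument.

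The only point requiring a little care — and the closest thing to an obstacle — is the identification of the generalized eigenvalues of $x_0$ on $\C[x_0]$ with those on the regular representation of $B$: one needs that a value $a$ is a generalized eigenvalue of $x_0$ on the regular representation of $B$ if and only if $(x-a) \mid m(x)$, i.e.\ if and only if $a$ is a root of the minimal polynomial. This holds because $m(x)$ is the minimal polynomial of the operator ``multiplication by $x_0$'' on $B$ (the regular representation is faithful), and the generalized eigenvalues of an operator are exactly the roots of its minimal polynomial. Granting this, everything else is a routine application of CRT together with Lemma~\ref{lem:4}, and no further computation is needed.
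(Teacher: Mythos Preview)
Your proposal is correct and follows essentially the same approach as the paper: pass to the commutative subalgebra \(\C[x_0]\), decompose it via the Chinese Remainder Theorem into local pieces \(\C[x]/(x-a)^{j_a}\), and apply Lemma~\ref{lem:4} in each factor. You are slightly more careful than the paper in justifying why the roots of the minimal polynomial coincide with the generalized eigenvalues of \(x_0\) on the regular representation of \(B\) (via faithfulness of the regular representation), a point the paper simply asserts.
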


\begin{proof}
  Consider the commutative subalgebra \(C\subseteq B\) generated by \(x_0\). By elementary linear algebra, \(C\) is isomorphic to the quotient of a polynomial ring. Hence, if \(a_1,\dotsc,a_n\) are the generalized eigenvalues of \(x_0\), the Chinese Remainder Theorem implies
  \begin{equation}
    \label{eq:11}
    C \cong \C[t]/(t-a_1)^{j_1} \oplus \dotsb \oplus \C[t]/(t-a_n)^{j_n},
  \end{equation}
  as algebras via the assignment \(x_0 \mapsto t \oplus \dotsb \oplus t\). By assumption, Lemma~\ref{lem:4} can be applied to each summand. Hence the projection of \(f(t)\) onto each summand is invertible. In particular, \(f(x_0)\) is invertible in \(C\). Moreover, by Lemma~\ref{lem:4} each component has a square root,  and hence \(f(x_0)\) has a square root in \(C\). Observe that the inverse is unique (since if an element inside a non-necessarily-commutative ring has inverses on both sides then they agree and are unique), while for the square root we have at least \(n\) possibilities, corresponding to choosing a sign in each summand.
\end{proof}

Let as before \(x_0 \in B\) be an element of a finite-dimensional algebra, and let \(a \in \C\). If \(a\) is not a generalized eigenvalue of \(x_0\) then by Proposition~\ref{prop:2} we can write expressions like
\begin{equation}
\frac{1}{x_0-a}, \qquad \sqrt{x_0-a}, \qquad  \sqrt\frac{1}{x_0 -a}.\label{eq:52}
\end{equation}
As we remarked, the square root is not unique, but we  make one choice once and for all, so that for example \((\sqrt{x_0-a})^2=x_0-a\).

Suppose now that \(a\) is a generalized eigenvalue of \(x_0\). What we can do then is to consider the idempotent \(\eta_{x_0 \neq a}\) projecting onto the generalized eigenspaces of \(x_0\) with eigenvalues different from \(a\), and define
\begin{equation}
  \label{eq:53}
  \frac{1}{x_0-a}  = \frac{1}{\eta_{x_0 \neq a}(x_0 -a) \eta_{x_0 \neq a}}
\end{equation}
in the idempotent truncation \(\eta_{x_0 \neq a} B \eta_{x_0 \neq a}\), and similarly for the square root. Note that the notation \eqref{eq:53}, although handy, is extremely dangerous since for instance when we simplify we must remember  the idempotent:
\begin{equation}
  \label{eq:54}
  (x_0 -a) \,\frac{1}{x_0-a} = \eta_{x_0 \neq a}.
\end{equation}

\section{Cyclotomic quotients and category \texorpdfstring{$\catO$}{O}}
\label{sec:cycl-quot-categ}

Fix additionally two positive integers \(m,n \in \Z_{>0}\) and let now \(\delta \in \Z\).  Let \(\mathfrak{g}=\gl_{m+n}\) be the complex general linear Lie algebra with standard triangular decomposition \(\mathfrak{g} = \frakn^- \oplus \frakh \oplus \frakn^+\). Consider the standard parabolic subalgebra  \(\mathfrak p = (\gl_m \oplus \gl_n) + \mathfrak
n^+\) corresponding to the Levi subalgebra \(\gl_m
\oplus \gl_n \subseteq \gl_{m+n}\). Let \(\epsilon_1,\dotsc,\epsilon_{m+n}\) be the standard basis of \(\frakh^*\), and set
\begin{equation}
  \label{eq:35}
  \rho = -\epsilon_2 -2 \epsilon_3- \cdots - (m+n-1) \epsilon_{m+n}.
\end{equation}
Moreover define
\begin{equation}
  \underline \delta= -\delta (\epsilon_1 +
  \cdots + \epsilon_m).\label{eq:14}
\end{equation}


Let \(\catO(m,n)=\catO_{\textit{int}}^\frakp(\gl_{m+n})\) be the
integral parabolic 
BGG category \(\catO\) that is, see \cite{MR2428237}, the full subcategory of representations of $\mathfrak{g}$ consisting of finitely generated \(\gl_{n+m}\)--modules that are locally
finite over \(\mathfrak p\), semisimple over \(\mathfrak h\), and have all
integral weights.
This category is studied extensively in \cite{MR2781018}. It is easy to see that the elements from 
\begin{equation}
  \label{eq:18}
  \Lambda(m,n) = \left\{ \lambda \in \mathfrak h^* \, \left\vert\,
      \begin{aligned}
        &(\lambda + \rho, \epsilon_j) \in \Z \text{ for all } 1 \leq j\leq m+n,\\
        &(\lambda + \rho, \epsilon_1) > \cdots > (\lambda + \rho, \epsilon_m),\\
        &(\lambda + \rho, \epsilon_{m+1}) > \cdots > (\lambda + \rho,
        \epsilon_{m+n})
      \end{aligned}\right.
  \right\}
\end{equation}
are precisely the highest weights of the simple objects in  \(\catO(m,n)\). Given \(\lambda \in \Lambda(m,n)\), we denote by \(M^{\frakp}(\lambda)\) the parabolic Verma module with highest weight \(\lambda\). Let also \(V^\up=V\) and \(V^\down=V^*\) denote the standard representation of \(\mathfrak{g}\) and its dual, respectively. 
\begin{definition}
Let the \emph{cyclotomic parameters}  be
\begin{equation}\label{eq:56}
\begin{aligned}
      \beta_1^\up& = -\delta + \frac{m+n}{2}, &&& \beta^\up_2 &= \frac{n-m}{2},\\
          \beta^\down_1 & = \frac{m+n}{2}, &&&  \beta^\down_2 & =  \delta + \frac{m-n}{2},\\
          \omega_0 &= m+n, &&& \omega_1 &= - \delta m + \tfrac{(m+n)^2}{2}.
\end{aligned}
\end{equation}
\end{definition}
It follows from Lemma~\ref{lem:5} that there is at most one choice of the parameters \(\omega_j\) for \(j\geq 2\) such that the cyclotomic walled Brauer category \(\uVB_{r,t}(\bomega; \beta_1^\up,\beta_2^\up; \beta_1^\down, \beta_2^\down)\) is nontrivial. We fix from now on this choice and abbreviate
\begin{equation}
  \label{eq:28}
\VB_{r+t}^\cycl=\VB_{r+t}(\bomega;\beta_1^\up,\beta_2^\up;\beta_1^\down,\beta_2^\down) \qquad \text{and} \qquad \VB_\bolda^\cycl = \idem_\bolda \VB_{r+t}^\cycl \idem_\bolda.
\end{equation}

 We recall the main result of \cite{MR3244645}:
\begin{theorem}
  \label{thm:1}
  Suppose that \(m,n \geq r+t\). Then for  all $\bolda \in \Seq_{r,t}$ we have an isomorphism of algebras
  \begin{equation}
    \label{eq:19}
    \VB_\bolda^\cycl
    \cong \End_{\gl_{m+n}}(M^\frakp(\udelta) \otimes V^\bolda), 
  \end{equation}
  where \(V^\bolda=V^{a_1} \otimes \dotsb \otimes V^{a_{r+t}}\). In particular, \(\dim_\C \VB_{\bolda}^\cycl = 2^{r+t} (r+t)!\).
\end{theorem}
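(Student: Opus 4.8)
This is the main result of \cite{MR3244645}; I sketch the strategy I would follow. The plan has three parts: (I) build an action of the degenerate affine walled Brauer category on the mixed tensor modules $M^\frakp(\udelta)\otimes V^{\bolda'}$; (II) verify all the defining relations, the cyclotomic ones being the decisive point; and (III) pass to an algebra map $\phi\colon\VB_\bolda^\cycl\to\End_{\gl_{m+n}}(M^\frakp(\udelta)\otimes V^\bolda)$ and show it is bijective by combining surjectivity with two matching dimension bounds. For (I): given $\bolda'\in\Seq_{r,t}$ set $M(\bolda')=M^\frakp(\udelta)\otimes V^{a'_1}\otimes\dots\otimes V^{a'_{r+t}}$ and number the tensor factors $0,1,\dots,r+t$, with $0$ the Verma factor. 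Let $\Omega=\sum_{a,b}e_{ab}\otimes e_{ba}$ be the Casimir tensor of $\gl_{m+n}$ and, for $0\le i<j\le r+t$, write $\Omega_{ij}$ for its action on factors $i$ and $j$ (via the contragredient action on factors carrying $V^\down$; on an adjacent pair $V\otimes V^*$ it acts through the evaluation and coevaluation maps). Send $\idem_{\bolda'}$ to $\id_{M(\bolda')}$; the generators $s_k\idem_{\bolda'}$ and $\hat s_k\idem_{\bolda'}$ to the flip of factors $k,k+1$; the generators $e_k\idem_{\bolda'}$ and $\hat e_k\idem_{\bolda'}$ to the composite of evaluation followed by coevaluation on factors $k,k+1$; and $y_i\idem_{\bolda'}$ to $\sum_{0\le j<i}\Omega_{ji}\idem_{\bolda'}$ plus the constant scalar $\tfrac{m+n}{2}$. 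The Jucys--Murphy recursion \eqref{eq:15} is then automatic from relation (Br\ref{item:17}).

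For (II): relations \ref{item:26}--\ref{item:br:8} hold because $(V,V^*)$ with their flip and (co)evaluation maps form a representation of the walled Brauer category whenever $m+n$ is large enough, every closed loop contributing the factor $\dim V=m+n$. Relations (Br\ref{item:8}), (Br\ref{item:2}), (Br\ref{item:17}) and (Br\ref{item:20}) are the standard commutator identities for the partial Casimirs $\Omega_{ij}$: they follow from $\gl_{m+n}$-invariance of $\Omega$, the fact that $\Omega$ acts on two adjacent standard factors as the flip, and the identity $[\Omega_{ij}+\Omega_{ik},\text{flip}_{jk}]=0$ (see \cite[Lemma~2.9, eq.~(2.12)]{MR3244645}). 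Relation (Br\ref{item:7}) holds with $e_1y_1^re_1=\omega_re_1$, where $\omega_r$ is the scalar by which the corresponding closed diagram acts; this gives $\omega_0=m+n$ and, from the trace form, $\omega_1=-\delta m+\tfrac{(m+n)^2}{2}$ as in \eqref{eq:56}. The crux is the cyclotomic relation. On $M^\frakp(\udelta)\otimes V$ the operator $y_1$ equals $\Omega_{01}+\tfrac{m+n}{2}$, and $M^\frakp(\udelta)\otimes V$ carries a parabolic Verma flag with subquotients $M^\frakp(\udelta+\epsilon_j)$ for those $j$ with $\udelta+\epsilon_j$ dominant in $\catO(m,n)$. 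Because $\udelta=-\delta(\epsilon_1+\dots+\epsilon_m)$ there are exactly two such $j$, one adding a box in the $\gl_m$-block and one in the $\gl_n$-block, and $\Omega_{01}$ acts on the corresponding subquotient by $\tfrac12\big((\udelta+\epsilon_j+2\rho,\udelta+\epsilon_j)-(\udelta+2\rho,\udelta)-(\epsilon_1+2\rho,\epsilon_1)\big)$; adding the shift, these evaluate to $\beta^\up_1=-\delta+\tfrac{m+n}{2}$ and $\beta^\up_2=\tfrac{n-m}{2}$. A module with a two-step filtration on which $y_1$ has eigenvalues $\beta^\up_1,\beta^\up_2$ is killed by $(y_1-\beta^\up_1)(y_1-\beta^\up_2)$, so \eqref{eq:48} holds; \eqref{eq:49} is the identical computation with $V^*$ in place of $V$, yielding $\beta^\down_1,\beta^\down_2$. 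By Lemma~\ref{lem:5} the parameters $\omega_j$ for $j\ge 2$ are forced, and they are compatible with the action because the very same linear recurrence is satisfied by $y_1$ on the tensor module. This yields the homomorphism $\phi$.

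For (III): $\phi$ is surjective. Arguing by induction on $r+t$ using the translation functors $-\otimes V$ and $-\otimes V^*$ on $\catO(m,n)$ together with the double centralizer property of category $\catO$ (the image of $U(\gl_{m+n})$ in $\End_\C(M^\frakp(\mu))$ is as large as possible), one sees that every $\gl_{m+n}$-endomorphism of $M^\frakp(\udelta)\otimes V^\bolda$ is a composite of the operators above; alternatively one reads this off the combinatorial description of these endomorphism rings coming from \cite{MR2781018}. Next, a straightening argument in $\VB_\bolda^\cycl$ --- moving all $y$'s to the right with (Br\ref{item:17}), (Br\ref{item:8}), (Br\ref{item:2}) and keeping their degrees below $2$ with \eqref{eq:48}, \eqref{eq:49} --- produces a spanning set of cardinality $2^{r+t}(r+t)!$, so $\dim_\C\VB_\bolda^\cycl\le 2^{r+t}(r+t)!$. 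Finally, when $m,n\ge r+t$ an explicit analysis of the block decomposition and parabolic Verma flag of $M^\frakp(\udelta)\otimes V^\bolda$ shows $\dim_\C\End_{\gl_{m+n}}(M^\frakp(\udelta)\otimes V^\bolda)\ge 2^{r+t}(r+t)!$. Since surjectivity of $\phi$ forces $\dim_\C\VB_\bolda^\cycl\ge\dim_\C\End_{\gl_{m+n}}(M^\frakp(\udelta)\otimes V^\bolda)$, the three estimates chain into equalities; hence both sides have dimension $2^{r+t}(r+t)!$ and $\phi$ is an isomorphism.

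\textbf{Main obstacle.} The scalar computations behind \eqref{eq:56} and the relation checks are routine once the partial-Casimir picture is in place. The genuinely difficult part is the pair of dimension estimates: on the algebra side, establishing a normal form for the level-two cyclotomic degenerate affine walled Brauer algebra --- the analogue of the basis theorems for cyclotomic Hecke and BMW algebras --- and on the Lie-theoretic side, controlling the parabolic Verma flag of $M^\frakp(\udelta)\otimes V^\bolda$ precisely enough to pin down its endomorphism-ring dimension. It is exactly here that the hypothesis $m,n\ge r+t$ enters, ruling out accidental coincidences among the relevant weights.
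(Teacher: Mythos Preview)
The paper does not prove this theorem at all: it simply states it as ``the main result of \cite{MR3244645}'' and cites that reference. Your proposal correctly identifies this and offers a reasonable outline of the argument from \cite{MR3244645} (action via partial Casimirs, verification of the cyclotomic relations through the Verma-flag eigenvalue computation, and matching dimension bounds), so there is nothing further to compare here.
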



Note that \(M^{\frakp}(\udelta) \otimes V^\bolda\) has a Verma flag, i.e.\ a filtration with subquotients isomorphic to  parabolic Verma modules. Although the filtration is not unique, the multiplicity with which  a Verma module occurs is, see \cite[Theorem~9.8 (f)]{MR2428237}. Following \cite[Section~7]{MR3244645} we explain now the combinatorics of these multiplicities using 4-Young diagrams. (We explain it in detail, since there are a few inaccuracies in \emph{loc.\ cit.}). 


Recall that a \emph{Young diagram} is a collection of boxes arranged in left-justified rows with the number of boxes per row weakly decreasing from top to bottom. The \emph{content} of the box in the \(r\)--th row and \(c\)--th column (counting from the left to the right and from the top to the bottom, and starting with \(0\)) is \(r-c\).

A \emph{rotated Young diagram} is a Young diagram rotated by 180 degrees. The content of its boxes is by definition the content of the original box in the original Young diagram (Figure~\ref{part:fig:youngdiagrams}).

\begin{figure}
  \centering
  \begin{tikzpicture}[scale=0.7]
    \draw (0,0) -- (4,0);
    \draw (0,-1) -- (4,-1);
    \draw (0,-2) -- (3,-2);
    \draw (0,-3) -- (1,-3);
    \draw (0,0) -- (0,-3);
    \draw (1,0) -- (1,-3);
    \draw (2,0) -- (2,-2);
    \draw (3,0) -- (3,-2);
    \draw (4,0) -- (4,-1);
    \begin{scope}[xshift=0.5cm,yshift=-0.5cm]
      \node at (0,0) {\(0\)}; \node at (1,0) {\(-1\)}; \node at (2,0)
      {\(-2\)}; \node at (3,0) {\(-3\)}; \node at (0,-1) {\(1\)}; \node at
      (1,-1) {\(0\)}; \node at (2,-1) {\(-1\)}; \node at (0,-2) {\(2\)};
    \end{scope}
  \end{tikzpicture}
  \hspace{1cm}
  \begin{tikzpicture}[scale=0.7,rotate=180]
    \draw (0,0) -- (4,0);
    \draw (0,-1) -- (4,-1);
    \draw (0,-2) -- (3,-2);
    \draw (0,-3) -- (1,-3);
    \draw (0,0) -- (0,-3);
    \draw (1,0) -- (1,-3);
    \draw (2,0) -- (2,-2);
    \draw (3,0) -- (3,-2);
    \draw (4,0) -- (4,-1);
    \begin{scope}[xshift=0.5cm,yshift=-0.5cm]
      \node at (0,0) {\(0\)}; \node at (1,0) {\(-1\)}; \node at (2,0)
      {\(-2\)}; \node at (3,0) {\(-3\)}; \node at (0,-1) {\(1\)}; \node at
      (1,-1) {\(0\)}; \node at (2,-1) {\(-1\)}; \node at (0,-2) {\(2\)};
    \end{scope}
  \end{tikzpicture}
  \caption{A Young diagram and a rotated Young diagram, with the contents written in the boxes.}
  \label{part:fig:youngdiagrams}
\end{figure}

Consider now an infinite vertical strip consisting of \(m+n\) infinite columns, numbered \(m+n,m+n-1,\dotsc,2,1\) from the left to the right. Let \(v\) be the vertical line separating the leftmost \(n\) columns from the remaining \(m\) columns. Fix also a horizontal line \(o\). The lines \(o\) and \(v\) divide our strip into four regions. We define a {\em 4-Young diagram} to be a collection of boxes in this strip, such that in the two regions underneath the horizontal line \(o\) we have two Young diagrams and in the two regions above \(o\) we have two rotated Young diagrams, arranged such that no column contains boxes both above and below \(o\) (see Figure~\ref{part:fig:4youngdiagram}).

By definition a 4-Young diagram \(Y\) is the union of four Young diagrams. We define the content of a box in \(Y\) to be the content in its Young diagram, shifted by \(\beta_1^\down\), \(\beta_2^\up\), \(\beta_2^\down\) and \(\beta_1^\up\) as indicated in
Figure~\ref{part:fig:4youngdiagram}.

Given a 4-Young diagram \(Y\), let \(b_j(Y)\) be the number of boxes in the column \(i\) of \(Y\), multiplied by \(1\) or \(-1\) depending if the boxes are above or below the line \(o\). Then to the diagram \(Y\) we associate a weight \(w(Y) \in \Lambda(m,n)\) defined by 
\begin{equation}
  \label{eq:26}
  w(Y) = b_{m+n}(Y) \epsilon_{m+n} + b_{m+n-1}(Y) \epsilon_{m+n-1} + \dotsc b_1(Y) \epsilon_1.
\end{equation}


\begin{figure}
\centering
\begin{tikzpicture}[scale=0.7]

  \draw[dashed] (0,4)  -- (0,-4);
  \draw[dashed] (6,4)  -- (6,-4) node[right] {\(v\)};
  \draw[dashed] (12,4) -- (12,-4);
  \draw[dashed] (0,0) -- (12,0) node[above right] {\(o\)};

  \draw[dotted] (1,4) -- ++(0,-8);
  \draw[dotted] (2,4) -- ++(0,-8);
  \draw[dotted] (4,4) -- ++(0,-8);
  \draw[dotted] (5,4) -- ++(0,-8);
  \draw[dotted] (7,4) -- ++(0,-8);
  \draw[dotted] (8,4) -- ++(0,-8);
  \draw[dotted] (10,4) -- ++(0,-8);
  \draw[dotted] (11,4) -- ++(0,-8);

  \node[anchor=south east,rotate=310,font=\scriptsize] at (0.5,4) {\(m+n\)};
  \node[anchor=south east,rotate=310,font=\scriptsize] at (1.5,4) {\(m+n-1\)};
  \node[anchor=south east,rotate=310,font=\scriptsize] at (3,4) {\(\dotsb\)};
  \node[anchor=south east,rotate=310,font=\scriptsize] at (4.5,4) {\(m+2\)};
  \node[anchor=south east,rotate=310,font=\scriptsize] at (5.5,4) {\(m+1\)};
  \node[anchor=south east,rotate=310,font=\scriptsize] at (6.5,4) {\(m\)};
  \node[anchor=south east,rotate=310,font=\scriptsize] at (7.5,4) {\(m-1\)};
  \node[anchor=south east,rotate=310,font=\scriptsize] at (9,4) {\(\dotsb\)};
  \node[anchor=south east,rotate=310,font=\scriptsize] at (10.5,4) {\(2\)};
  \node[anchor=south east,rotate=310,font=\scriptsize] at (11.5,4) {\(1\)};

  \draw (0,0) -- (2,0);
  \draw (0,-1) -- (2,-1);
  \draw (0,-2) -- (1,-2);
  \draw (0,0) -- (0,-2);
  \draw (1,0) -- (1,-2);
  \draw (2,0) -- (2,-1);
  \node at (0.5,-0.5) {\(0\)};
  \node at (1.5,-0.5) {\(-1\)};
  \node at (0.5,-1.5) {\(1\)};
  \node[anchor=north west] at (2,-2) {\(\beta_1^\down = \frac{m+n}{2}\)};
  \draw[thin] (2,-2) -- (1.4,-1.4);

  \begin{scope}[xshift=6cm, rotate=180]
  \draw (0,0) -- (2,0);
  \draw (0,-1) -- (2,-1);
  \draw (0,-2) -- (1,-2);
  \draw (0,-3) -- (1,-3);
  \draw (0,0) -- (0,-3);
  \draw (1,0) -- (1,-3);
  \draw (2,0) -- (2,-1);
  \node at (0.5,-0.5) {\(0\)};
  \node at (1.5,-0.5) {\(-1\)};
  \node at (0.5,-1.5) {\(1\)};
  \node at (0.5,-2.5) {\(2\)};
  \node[anchor=south east] at (2,-2) {\(\beta_2^\up=\frac{n-m}{2}\)};
  \draw[thin] (2,-2) -- (1.4,-1.4);
  \end{scope}

  \begin{scope}[xshift=6cm]
  \draw (0,0) -- (2,0);
  \draw (0,-1) -- (2,-1);
  \draw (0,-2) -- (1,-2);
  \draw (0,-3) -- (1,-3);
  \draw (0,0) -- (0,-3);
  \draw (1,0) -- (1,-3);
  \draw (2,0) -- (2,-1);
  \node at (0.5,-0.5) {\(0\)};
  \node at (1.5,-0.5) {\(-1\)};
  \node at (0.5,-1.5) {\(1\)};
  \node at (0.5,-2.5) {\(2\)};
  \node[anchor=north west] at (2,-2) {\(\beta_2^\down=\frac{m-n}{2}+\delta\)};
  \draw[thin] (2,-2) -- (1.4,-1.4);
  \end{scope}

  \begin{scope}[xshift=12cm,rotate=180]
  \draw (0,0) -- (2,0);
  \draw (0,-1) -- (2,-1);
  \draw (0,-2) -- (1,-2);
  \draw (0,-3) -- (1,-3);
  \draw (0,0) -- (0,-4);
  \draw (1,0) -- (1,-4);
  \draw (2,0) -- (2,-1);
  \draw (0,-4) -- (1,-4);
  \node at (0.5,-0.5) {\(0\)};
  \node at (1.5,-0.5) {\(-1\)};
  \node at (0.5,-1.5) {\(1\)};
  \node at (0.5,-2.5) {\(2\)};
  \node at (0.5,-3.5) {\(3\)};
  \node[anchor=south east] at (2,-2) {\(\beta_1^\up = \frac{m+n}{2}-\delta\)};
  \draw[thin] (2,-2) -- (1.4,-1.4);
  \end{scope}

\end{tikzpicture}
\caption{A 4-Young diagram with the contents in the boxes. The corresponding weight is \(4 \epsilon_1 + \epsilon_2 - \epsilon_{m-1} - 3 \epsilon_m + 3\epsilon_{m+1}+\epsilon_{m+2} - \epsilon_{m+n-1} - 2 \epsilon_{m+n}\). }
\label{part:fig:4youngdiagram}
\end{figure}
Given a 4-Young diagram \(Y\), we may obtain another 4-Young diagram
\(Y'\) by adding a box (in this case we also say that \(Y\) is obtained by
removing a box from \(Y'\)).  We will use the expressions \emph{adding} and \emph{removing
boxes} only if the result is again a 4-Young diagram.  For an
\((r,t)\)--sequence \(\bolda\) define \(\calY_\bolda\) to be the set 
\begin{equation}
 \{Y_\bullet =( Y_0,Y_1,\dotsc,Y_{r+t})\}\label{part:eq:236}
\end{equation}
of
sequences of 4-Young diagrams such that \(Y_0\) is the empty diagram and
\(Y_{i+1}\) is obtained from \(Y_j\) by
\begin{itemize}
\item adding a box above \(o\) or removing a box below \(o\) if \(a_j = 1\),
\item removing a box above \(o\) or adding a box below \(o\) if \(a_j=-1\).
\end{itemize}


\begin{proposition}[{\cite[Lemma~7.1 and Proposition~7.3]{MR3244645}}]
  \label{part:prop:4}
  Suppose \(m,n \geq r+t\). Then there is a bijection between \(\calY_\bolda\) and the parabolic Verma modules appearing in a Verma filtration of  \(M^\frakp(\udelta)\otimes V^{ \bolda}\)
  (counted with multiplicities) such that the following holds
  
  \begin{enumerate}[(a)]
  \item The Verma module corresponding to \(Y_\bullet=(Y_0,\dotsc,Y_{r+t})\) is isomorphic to
    \(M(\udelta + w(Y_{r+t}))\).
  \item For \(j = 1,\dotsc,r+t\) let \(\nu_j=1\) if
    \(Y_j\) is obtained from \(Y_{j-1}\) by adding a
    box of content \(i_j\), otherwise let \(\nu_j=-1\) if
    \(Y_j\) is obtained from \(Y_{j-1}\) by removing a
    box of content \(i_j\). Then the Verma module
    \(M(\udelta + w(Y_{r+t}))\) corresponding to \(Y_\bullet\)
    is contained in the generalized eigenspace for the
    \(y_k\)'s with generalized eigenvalues \((\nu_1
    i_1,\ldots, \nu_{r+t} i_{r+t})\).
  \end{enumerate}
\end{proposition}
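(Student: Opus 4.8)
The plan is to reduce the whole statement to the behaviour of parabolic Verma flags under tensoring with $V=V^\up$ and $V^\down=V^*$, and then to translate the resulting weight combinatorics into the language of $4$-Young diagrams. I would start from the standard fact (in type $A$ parabolic category $\catO$, recalled as Lemma~7.1 of \cite{MR3244645}) that $-\otimes V$ and $-\otimes V^\down$ are exact, biadjoint, and preserve the property of having a parabolic Verma flag, and that in the range we care about a Verma flag of $M^\frakp(\lambda)\otimes V$ has as subquotients precisely the modules $M^\frakp(\lambda+\epsilon_i)$, each exactly once, over those $i\in\{1,\dots,m+n\}$ with $\lambda+\epsilon_i\in\Lambda(m,n)$ — and symmetrically $M^\frakp(\lambda)\otimes V^\down$ with the weights $\lambda-\epsilon_i$. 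One clean way to pin down the multiplicities is to note that the multiplicity of $M^\frakp(\mu)$ in a Verma flag of a module $N$ equals $\dim\Hom(N,\nabla^\frakp(\mu))$ (costandard $=$ dual parabolic Verma module), so by biadjointness it is $\dim\Hom(M^\frakp(\lambda),\nabla^\frakp(\mu)\otimes V^\down)$, and since $\nabla^\frakp(\mu)\otimes V^\down$ has a costandard flag with subquotients $\nabla^\frakp(\mu-\epsilon_i)$, this is $1$ or $0$ according to whether $\mu=\lambda+\epsilon_i\in\Lambda(m,n)$ for some $i$.

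Next I would iterate this over the tensor factors of $M^\frakp(\udelta)\otimes V^\bolda=M^\frakp(\udelta)\otimes V^{a_1}\otimes\dots\otimes V^{a_{r+t}}$, which produces a canonical filtration whose subquotients are indexed, with multiplicity one each, by the sequences $\mu_\bullet=(\mu_0=\udelta,\mu_1,\dots,\mu_{r+t})$ of weights in $\Lambda(m,n)$ with $\mu_j-\mu_{j-1}=a_j\epsilon_{i_j}$ for some index $i_j$. The hypothesis $m,n\geq r+t$ is exactly what is needed here: in $r+t$ steps, starting from $\udelta$, one can never reach the boundary of either Levi block, so the admissible sequences are precisely those obtained by freely adding and removing boxes. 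Under the standard bijection between $\Lambda(m,n)$ and $4$-Young diagrams (the $m+n$ columns $\leftrightarrow$ the coordinates $\epsilon_\ell$, the signed column heights $\leftrightarrow$ the coefficients, as in Figure~\ref{part:fig:4youngdiagram}), adding $\epsilon_i$ (resp. $-\epsilon_i$) corresponds exactly to adding a box above $o$ or removing one below $o$ (resp. the opposite), which is the recursion defining $\calY_\bolda$. Hence $\mu_\bullet\mapsto Y_\bullet$ is a bijection onto $\calY_\bolda$, and $\mu_{r+t}=\udelta+w(Y_{r+t})$ by the very definition \eqref{eq:26} of $w$; this is part (a).

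For part (b) I would recall how the Jucys--Murphy-type element $y_k$ acts on $M^\frakp(\udelta)\otimes V^\bolda$ under the isomorphism of Theorem~\ref{thm:1}: following \cite{MR3244645}, $y_k$ is realized as a suitably normalized ``partial Casimir'' operator on the first $k+1$ tensor factors, so that on the subquotient $M^\frakp(\mu_k)$ produced at step $k$ it acts by a scalar of the form $\tfrac12\big(c(\mu_k)-c(\mu_{k-1})\big)+(\text{const})$, where $c(\lambda)$ is the Casimir eigenvalue of $M^\frakp(\lambda)$. Since the full filtration of $M^\frakp(\udelta)\otimes V^\bolda$ need not split, this is only a \emph{generalized} eigenvalue on the corresponding subquotient of the whole module, which is exactly why the statement speaks of generalized eigenspaces. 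As $\mu_k-\mu_{k-1}=\nu_k\epsilon_{i_j}$ with $\nu_k=\pm1$, expanding $c$ gives $c(\mu_k)-c(\mu_{k-1})=2\nu_k(\mu_{k-1}+\rho,\epsilon_{i_j})+1$ (with $\rho$ the half-sum of positive roots), and the remaining task is the short check that, with the normalization encoded in the parameters \eqref{eq:56}, the resulting scalar is exactly $\nu_k i_k$, the signed content of the box added or removed at step $k$. I expect this to be the only real difficulty: one has to line up the $\rho$-shift and the overall Casimir normalization on the Lie-theoretic side against the column-dependent content shifts $\beta_1^\down,\beta_2^\up,\beta_2^\down,\beta_1^\up$ built into the $4$-Young diagram picture, and confirm that \eqref{eq:56} is precisely the choice of parameters making the two bookkeepings agree on the nose. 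Alternatively, as in \cite{MR3244645}, part (b) can be read off from the diagrammatic model of $\catO(m,n)$ via Khovanov-type arc algebras, in which the action of the $y_k$'s on weight diagrams is recorded combinatorially.
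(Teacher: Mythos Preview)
The paper does not prove this proposition at all; it is simply quoted from \cite[Lemma~7.1 and Proposition~7.3]{MR3244645} and used as input for the rest of the argument. Your sketch is essentially the proof given in that reference: part~(a) is obtained by iterating the standard Verma-flag formula for $M^\frakp(\lambda)\otimes V^{\pm}$ (exactly your first two paragraphs, with the hypothesis $m,n\ge r+t$ guaranteeing that no boundary of a Levi block is reached), and part~(b) is the Casimir computation identifying the action of $y_k$ with $\tfrac12(c(\mu_k)-c(\mu_{k-1}))+\text{const}$ and then matching this against the shifted contents. Your diagnosis of the only genuine check --- that the choice of parameters \eqref{eq:56} is precisely what makes the Casimir difference equal to the signed content $\nu_k i_k$ with the $\beta$-shifts of Figure~\ref{part:fig:4youngdiagram} --- is exactly the content of \cite[Proposition~7.3]{MR3244645}, and the verification there goes just as you outline. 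Apart from a harmless index slip (you write $\epsilon_{i_j}$ where you mean $\epsilon_{i_k}$ in the Casimir paragraph), there is nothing missing.
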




\begin{definition}
\label{def:small}
We call a generalized eigenvalue for \(y_k\) \emph{small} if it corresponds to the content of  a box belonging to one of the two Young diagrams in the middle, adjacent to the vertical line \(v\). Otherwise we call it \emph{large}.
\end{definition}

For the rest of the paper, we make the following assumption:

\begin{assumption}\label{ass:1}
    We suppose that \(m\) and \(n\) are big enough and close
    enough to each other, in the sense that \(\abs{\beta_2^\up} + r +
    t < \beta_1^\up\) and \(\abs{\beta_2^\down} + r + t <
    \beta_1^\down\).
\end{assumption}

For example, one could choose \(m=n\), and \(r+t+\abs{\delta} < m\).
In the combinatorial partition calculus, this means that the two middle Young diagrams in the 4--Young diagram are always far away from the external Young diagrams, and the small eigenvalues are always smaller, in absolute value, than the large ones.

\begin{definition}
  For all \(k =1,\dotsc,r+t\) let \(\eta_k\idem_\bolda \in \VB_{\bolda}^\cycl\)
  be the idempotent projecting onto the generalized 
  eigenspaces of \(y_k \idem_\bolda\) with eigenvalues
  different from \(\beta^{a_k}_1\).
  Let also \(\idempotente_k\idem_\bolda = \eta_1 \eta_2 \dotsm
  \eta_k\idem_\bolda\) and \(\idempotente \idem_\bolda =
  \idempotente_{r+t} \idem_\bolda\). Finally, let
  \(\idempotente = \sum_{\bolda} \idempotente
  \idem_\bolda\).\label{def:1}
\end{definition}

Observe that \(\eta_k\idem_\bolda\) can be expressed as a
polynomial in \(y_k\idem_\bolda\); this will be important 
when we describe its relationship with elements in the algebra. 

\begin{remark}
  \label{rem:12}
  It follows by the partition calculus that if \(y_k \idem_\bolda\) has a large generalized eigenvalue on some composition factor, then there exists an index \(j \leq k\) such that \(y_j \idem_\bolda\) has generalized eigenvalue \(\beta_1^{a_j}\) on the same composition factor. In particular, \(\idempotente\) projects onto the generalized eigenspaces with small eigenvalues.
\end{remark}

Let \(\overline\catO(m,n) \subseteq \catO(m,n)\) be the full subcategory containing all simple modules \(L(\lambda)\) for \(\lambda \in \overline\Lambda(m,n)\), where
\begin{equation}
  \label{eq:20}
  \overline \Lambda(m,n) = \{ \lambda \in \Lambda(m,n) \suchthat -(m+n-1) \leq (\lambda+ \rho, \epsilon_j) \leq  0 \text{ for all }j\}.
\end{equation}
Note that this set is the union of orbits for the action of the Weyl group of $\mathfrak{g}$, hence it follows, \cite[4.9]{MR2428237} that \(\overline \catO(m,n)\) is a direct summand of \(\catO(m,n)\); in particular, there is a projection \(\pi: \catO(m,n) \mapto \overline\catO(m,n)\) and an inclusion \(\iota: \overline \catO(m,n) \mapto \catO(m,n)\).

By the partition calculus, an indecomposable summand \(S\) appearing in the decomposition of \(M(\udelta) \otimes V^\bolda\) as \(\mathfrak{g}\)--module has small generalized eigenvalues for the action of \(\uVB_{\bolda}^\cycl\) if and only if \(S \in \catO(m,n)\). Let \(F^{\updownarrow}: \overline \catO(m,n) \mapto \overline \catO(m,n)\) denote the functor \(\pi \circ (\otimes V^\updownarrow) \circ \iota\), where \(\updownarrow\) is either \(\downarrow\) or \(\uparrow\), and let \(F^\bolda=F^{a_{r+t}} \circ \dotsb \circ F^{a_{1}}\). As a direct consequence of Theorem ~\ref{thm:1} using the definitions we obtain:

\begin{corollary}
  \label{lem:7}
  The isomorphism \eqref{eq:19} induces an isomorphism
  \begin{equation}
    \label{eq:21}
    \idempotente\, \uVB_{\bolda}^\cycl\, \idempotente  \cong \End_{\mathfrak{g}}(F^\bolda M^\frakp(\udelta) ). 
  \end{equation}
\end{corollary}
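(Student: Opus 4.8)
The plan is to combine Theorem~\ref{thm:1} with the two observations that precede the statement: first, that $\idempotente$ is (as noted in Remark~\ref{rem:12} and the paragraph after Assumption~\ref{ass:1}) exactly the idempotent cutting out the composition factors with \emph{small} eigenvalues; and second, that a summand $S$ of $M^{\frakp}(\udelta)\otimes V^{\bolda}$ lies in $\overline{\catO}(m,n)$ precisely when it has small eigenvalues for $\uVB_{\bolda}^{\cycl}$. The first step is to record that under the isomorphism~\eqref{eq:19}, the subalgebra $\idempotente\,\uVB_{\bolda}^{\cycl}\,\idempotente$ goes to $e\,\End_{\gl_{m+n}}(M^{\frakp}(\udelta)\otimes V^{\bolda})\,e$ where $e$ is the idempotent corresponding to $\idempotente$, i.e.\ the projection of $M^{\frakp}(\udelta)\otimes V^{\bolda}$ onto the sum of its indecomposable summands with only small $y_k$-eigenvalues. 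This is immediate because $\idempotente$ is a polynomial expression in the $y_k\idem_\bolda$ (as observed right after Definition~\ref{def:1}) and the $y_k$ act via endomorphisms of $M^{\frakp}(\udelta)\otimes V^{\bolda}$, so $\idempotente$ maps to an honest idempotent in the endomorphism ring, and its image is the projector onto the generalized eigenspaces with eigenvalues $\neq \beta_1^{a_k}$ for each $k$.

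Next I would identify that summand-submodule concretely. Write $M^{\frakp}(\udelta)\otimes V^{\bolda}=\bigoplus_i S_i$ as $\gl_{m+n}$-modules; by the partition-calculus statement quoted just before the corollary, the summands with small eigenvalues are precisely those $S_i$ lying in $\overline{\catO}(m,n)$. Hence $e\cdot\bigl(M^{\frakp}(\udelta)\otimes V^{\bolda}\bigr)=\bigoplus_{S_i\in\overline{\catO}} S_i = \iota\,\pi\,(M^{\frakp}(\udelta)\otimes V^{\bolda})$, which by definition of the functors $F^{\updownarrow}$ and an easy induction on the length of $\bolda$ equals (up to the harmless $\iota,\pi$ bookkeeping) $F^{\bolda}M^{\frakp}(\udelta)$. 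The induction here just uses that $\pi\circ(\otimes V^{\updownarrow})\circ\iota$ applied repeatedly, starting from $M^{\frakp}(\udelta)$ (which lies in $\overline{\catO}(m,n)$ by Assumption~\ref{ass:1}, since $\udelta$ has all relevant coordinates in the required range), builds up exactly the small-eigenvalue part at each tensoring step — and one must check that tensoring first and then projecting agrees with projecting at intermediate stages, which follows because the large-eigenvalue summands, once they appear, keep a coordinate outside $[-(m+n-1),0]$ under further tensoring with $V^{\up}$ or $V^{\down}$ in the range allowed by Assumption~\ref{ass:1} (this is exactly the content of Remark~\ref{rem:12}).

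Finally, I would invoke the elementary fact that for an idempotent $e$ in an algebra $A=\End(X)$ coming from a direct-sum decomposition $X = eX\oplus(1-e)X$, one has $eAe\cong\End(eX)$ canonically. Applying this with $A=\End_{\gl_{m+n}}(M^{\frakp}(\udelta)\otimes V^{\bolda})$, $eX=\iota\,\pi\,(M^{\frakp}(\udelta)\otimes V^{\bolda})\cong F^{\bolda}M^{\frakp}(\udelta)$, and composing with the isomorphism of Theorem~\ref{thm:1}, we get
\[
\idempotente\,\uVB_{\bolda}^{\cycl}\,\idempotente\;\cong\; e\,\End_{\gl_{m+n}}(M^{\frakp}(\udelta)\otimes V^{\bolda})\,e\;\cong\;\End_{\gl_{m+n}}(\iota\,\pi\,(M^{\frakp}(\udelta)\otimes V^{\bolda}))\;\cong\;\End_{\mathfrak{g}}(F^{\bolda}M^{\frakp}(\udelta)),
\]
where the last isomorphism uses that $\pi$ and $\iota$ are inverse equivalences between $\overline{\catO}(m,n)$ and the summand of $\catO(m,n)$ it spans, so that $\End$ computed in $\catO(m,n)$ of an object of $\overline{\catO}(m,n)$ agrees with $\End$ computed in $\overline{\catO}(m,n)$.

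The only genuinely delicate point — the ``main obstacle'' — is the identification $eX\cong F^{\bolda}M^{\frakp}(\udelta)$, i.e.\ making sure that ``project at the end'' and ``project at each intermediate tensoring step'' give the same module. This is where Assumption~\ref{ass:1} does its work: it guarantees that a summand which has acquired a large eigenvalue (equivalently, a weight coordinate leaving the box $[-(m+n-1),0]$, equivalently having left $\overline{\catO}(m,n)$) can never re-enter $\overline{\catO}(m,n)$ after further tensoring with at most $r+t$ copies of $V^{\up}$ or $V^{\down}$, because each such tensoring shifts coordinates by at most $1$ and there are fewer than $\beta_1^{\updownarrow}$ steps in total. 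I would state this as a short lemma (or simply cite Remark~\ref{rem:12}, which already asserts exactly the needed monotonicity) and then the corollary follows formally from Theorem~\ref{thm:1} and the standard idempotent-truncation-of-an-endomorphism-ring fact, exactly as the phrase ``using the definitions we obtain'' in the excerpt suggests.
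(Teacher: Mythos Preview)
Your proposal is correct and is exactly the unpacking of the paper's one-line justification ``As a direct consequence of Theorem~\ref{thm:1} using the definitions we obtain''. The paper gives no further argument, and your identification of $\idempotente$ with the projector onto the small-eigenvalue (equivalently, $\overline\catO(m,n)$-) summands together with the standard fact $eAe\cong\End(eX)$ is precisely what that sentence is asking the reader to supply; your discussion of the one delicate point (projecting at each step versus at the end, handled by Remark~\ref{rem:12} and Assumption~\ref{ass:1}) is the right place to put the emphasis.
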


\begin{lemma}
  \label{lem:8}
  For all \(\bolda \in \Seq_{r,t}\) we have
  \begin{equation}
    \label{eq:22}
\dim \End_{\mathfrak{g}}(F^\bolda M^\frakp(\udelta) )   = (r+t)!
  \end{equation}
\end{lemma}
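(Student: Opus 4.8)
The plan is to compute $\dim \End_{\mathfrak{g}}(F^\bolda M^\frakp(\udelta))$ by combining Corollary~\ref{lem:7} with the dimension count in Theorem~\ref{thm:1}, and then to argue that passing to the idempotent truncation $\idempotente \uVB_\bolda^\cycl \idempotente$ cuts the dimension down from $2^{r+t}(r+t)!$ to exactly $(r+t)!$. By Corollary~\ref{lem:7}, $\End_{\mathfrak{g}}(F^\bolda M^\frakp(\udelta)) \cong \idempotente \uVB_\bolda^\cycl \idempotente$, so it suffices to compute the dimension of this truncation. Since $\uVB_\bolda^\cycl$ has a basis indexed (via Theorem~\ref{thm:1} and the combinatorics of Proposition~\ref{part:prop:4}) by something of total size $2^{r+t}(r+t)!$, I expect the truncation by the idempotent $\idempotente$ to correspond, on the level of $\gl_{m+n}$--modules, to retaining precisely the indecomposable summands of $M^\frakp(\udelta)\otimes V^\bolda$ lying in $\overline\catO(m,n)$, i.e.\ those with \emph{small} generalized eigenvalues.

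Concretely, the idea is to realize $\uVB_\bolda^\cycl \cong \End_{\gl_{m+n}}(M^\frakp(\udelta)\otimes V^\bolda)$ and to understand the module $M^\frakp(\udelta)\otimes V^\bolda$ via its decomposition into indecomposable summands. By Proposition~\ref{part:prop:4} the Verma modules in a Verma flag of $M^\frakp(\udelta)\otimes V^\bolda$ are indexed by $\calY_\bolda$, the sequences $Y_\bullet=(Y_0,\dots,Y_{r+t})$ of 4-Young diagrams obtained by successively adding/removing boxes according to $\bolda$. Under Assumption~\ref{ass:1} the additions/removals happening ``in the middle'' (near the line $v$) are completely independent of those happening ``on the outside'' (near $\beta_1^\up$, $\beta_1^\down$): the relevant contents never collide. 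Thus $\calY_\bolda$ splits as a product of the $(r+t)!$-many sequences of moves involving only the two middle Young diagrams, and the $2^{r+t}$-many choices (at each of the $r+t$ steps, ``middle or outside''). The key point, recorded in Remark~\ref{rem:12}, is that $\idempotente$ projects onto exactly the summands whose generalized $y_k$-eigenvalues are all small, i.e.\ those for which at every step the box is added/removed in the middle. Hence $\idempotente\bigl(M^\frakp(\udelta)\otimes V^\bolda\bigr)$, equivalently $F^\bolda M^\frakp(\udelta)$, has a Verma flag with exactly $(r+t)!$ parabolic Verma subquotients, indexed by the sequences of middle-moves only.

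To conclude, I would show that $\dim\End_{\mathfrak{g}}(F^\bolda M^\frakp(\udelta))$ equals this multiplicity $(r+t)!$. This follows if $F^\bolda M^\frakp(\udelta)$ is multiplicity-free as a sum of indecomposable summands, each with a \emph{one-dimensional} endomorphism ring and pairwise no homomorphisms. The cleanest route: the $(r+t)!$ middle-sequences in $\calY_\bolda$ are pairwise distinct and, under Assumption~\ref{ass:1}, yield pairwise distinct tuples of generalized $y_k$-eigenvalues; since $y_1,\dots,y_{r+t}$ act by pairwise-commuting operators on $F^\bolda M^\frakp(\udelta)$ via the algebra $\idempotente\uVB_\bolda^\cycl\idempotente$, each such tuple is a central-character/eigenvalue datum that separates the corresponding blocks, so $\End_{\mathfrak{g}}(F^\bolda M^\frakp(\udelta))$ decomposes as a sum over these $(r+t)!$ data. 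Within each, the corresponding piece has a Verma flag of length $1$ (a single parabolic Verma module), so it is indecomposable with trivial endomorphism algebra $\C$. Summing gives $\dim = (r+t)!$. Alternatively and more robustly, one can avoid analyzing the module structure directly and instead use that $\idempotente$ is a sum of the primitive central-type idempotents of $\uVB_\bolda^\cycl$ corresponding to small eigenvalue-tuples, so that $\dim \idempotente\uVB_\bolda^\cycl\idempotente = \sum_{\text{small tuples}} (\text{mult})^2$, and Assumption~\ref{ass:1} forces each such multiplicity to be $1$, with exactly $(r+t)!$ small tuples.

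The main obstacle is the last step: controlling the endomorphism ring (or equivalently the decomposition-matrix multiplicities) rather than merely the Verma-flag length. Knowing $F^\bolda M^\frakp(\udelta)$ has $(r+t)!$ parabolic Verma subquotients does not by itself give $\dim\End = (r+t)!$ — one needs that the indecomposable summands are pairwise non-isomorphic with $\End = \C$. I expect Assumption~\ref{ass:1} to be exactly what makes this work, by ensuring that the ``small'' part of the combinatorics is genuinely that of a regular weight far from walls, so that the relevant summands are parabolic Verma modules themselves (projective-injective-like behaviour in the relevant window) and the $y_k$-eigenvalue tuples separate them completely. Making this translation-functor/combinatorial argument precise, citing the appropriate multiplicity results from \cite{MR2428237} and \cite{MR3244645}, is where the real work lies.
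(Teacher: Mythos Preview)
There is a genuine gap in the combinatorial count. You claim that the number of ``middle-only'' paths in \(\calY_\bolda\) is \((r+t)!\), so that each path contributes a single Verma subquotient with \(\End=\C\) and the total is \((r+t)!\). This is false already for \(\bolda=\up\up\up\): the middle-only paths are the standard Young tableaux of size \(3\), of which there are \(4\) (one each for the shapes \((3)\) and \((1,1,1)\), two for \((2,1)\)), not \(6\). The Verma-flag length of \(F^\bolda M^\frakp(\udelta)\) is therefore \emph{not} \((r+t)!\) in general, and your ``\((r+t)!\) blocks, each with \(\End=\C\)'' picture cannot hold.

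The related claim that distinct eigenvalue tuples separate the summands into Hom-orthogonal pieces is also wrong: two different paths may end at the same 4-Young diagram \(Y_{r+t}\), hence give the \emph{same} parabolic Verma module \(M^\frakp(\udelta+w(Y_{r+t}))\) sitting in different generalized eigenspaces, and there are certainly nonzero \(\gl_{m+n}\)-homomorphisms between isomorphic Verma modules. The \(y_k\)'s are particular endomorphisms, not central in \(\End_{\mathfrak g}\), so their eigenspace decomposition does not block-decompose the endomorphism ring.

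What actually happens (and what the paper does) is that one first reduces to a single \(\bolda\), say \(\bolda=\up^{r+t}\), by showing that \(F^\up\) and \(F^\down\) are biadjoint and commute; this makes \(\dim\End_{\mathfrak g}(F^\bolda M^\frakp(\udelta))\) independent of \(\bolda\). For \(\bolda=\up^{r+t}\) one then checks, using Assumption~\ref{ass:1}, that \(M^\frakp(\udelta)\) and all parabolic Vermas appearing are projective, so the endomorphism ring is semisimple and its dimension is \(\sum_\lambda m_\lambda^2\) with \(m_\lambda=f_\lambda\) the number of standard tableaux of shape \(\lambda\). The identity \(\sum_{\lambda\vdash r+t} f_\lambda^2=(r+t)!\) then finishes the proof. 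Your approach misses both the reduction step and the projectivity/semisimplicity argument, and replaces the RSK-type identity by an incorrect path count.
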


\begin{proof}
  Note that we have pairs of biadjoint functors \((\otimes V^\up,\otimes V^\down)\) and \((\iota, \pi)\). Hence, the functors \(F^\up\) and \(F^\down\) are biadjoint as well. Let us now prove that the functors \(F^\up\) and \(F^\down\) commute. Consider \(X \in \overline\catO(m,n)\). Then of course \(\pi(X \otimes V \otimes V^*) \cong \pi(X \otimes V^* \otimes V)\). Now we have
  \begin{align}
    \label{eq:27}
    \pi(X \otimes V \otimes V^*) &= F^\down F^\up (X) \oplus \pi\big((1-\pi)(X \otimes V) \otimes V^*\big),\\
    \pi(X \otimes V^* \otimes V) &= F^\up F^\down (X) \oplus \pi\big((1-\pi)(X \otimes V^*) \otimes V\big).
  \end{align}
It follows by the theory of projective functors on category \(\catO\) (see in particular \cite[Theorem~7.8]{MR2428237}) that \(\pi\big((1-\pi)(X \otimes V) \otimes V^*\big) \cong X \cong \pi\big((1-\pi)(X \otimes V^*) \otimes V\big)\), and hence we must have \(F^\down F^\up(X) \cong F^\up F^\down(X)\).

Since \(F^\up\) and \(F^\down\) commute and are biadjoint, it is enough to consider the case \(a_k = {\uparrow}\) for all \(k\). Note that by our assumption the module \(M^\frakp(\udelta)\) is projective, since \(\udelta\) is maximal among all weights \(\lambda\) in its dot orbit such that the simple module \(L(\lambda)\) belongs to the parabolic category \(\overline \catO(m,n)\). By the same argument, all parabolic Verma which are composition factors of \((F^\up)^{r+t}(M^\frakp(\udelta))\) are also projective, hence \(\End_{\gl_{m+n}}((F^\up)^{r+t} M^\frakp(\udelta))\) is semisimple. Its dimension is then given by the following well-known counting formula: if the composition factors are \(M(\lambda_1),\dotsc,M(\lambda_N)\) and they appear \(\kappa_1,\dotsb,\kappa_N\) times respectively, then the dimension is \(\sum \kappa_j^2\).

In our case, by Proposition~\ref{part:prop:4} the summands appearing are parametrized by Young diagrams with \(r+t\) boxes (note that in this particular case we are only adding boxes in the middle Young diagram above the horizontal line \(o\)). The summand corresponding to the Young diagram \(Y\) occurs \(f_Y\) times, where \(f_Y\) is the number of paths of Young diagrams \(Y_0=\varnothing,Y_1,\dotsc,Y_{r+t}=Y\) such that \(Y_{k+1}\) is obtained by adding a box to \(Y_k\). It is well-known that \(f_Y\) is equal to the number of standard tableaux of shape \(Y\), and \(\sum_Y f_Y^2 = (r+t)!\), see \cite[Chapter~4]{MR1464693}, and this proves our claim.
\end{proof}

\section{The isomorphism theorem}
\label{sec:isomorphism-theorem}
We still assume that Assumption~\ref{ass:1} holds.
Recall that we defined \(\idempotente\idem_\bolda \in \VB_{\bolda}^\cycl\) to be the idempotent projecting onto the direct sum of the generalized eigenspaces with small eigenvalues, see Definitions~\ref{def:1} and ~\ref{def:small}.


We let \(\operatorname{rev}(\up)=\down\) and \(\operatorname{rev}(\down)=\up\).
Set
\begin{equation}
  b_k  = \sum_{\bolda} (\beta^{\operatorname{rev}(a_k)}_1 + y_k) \idem_\bolda, \qquad
  c_k  = \sum_\bolda (\beta^{a_k}_1 - y_k) \idem_\bolda \label{eq:105}
\end{equation}
and
\begin{equation}
Q_k  = \sqrt\frac{b_{k+1}}{b_k} \idempotente \label{eq:39}
\end{equation}
where for the definition of inverses and square roots we refer to  Section~\ref{sec:invers-square-roots}.


\begin{definition}
  \label{def:3}
  Let \(\bolda\) be an \((r,s)\)--sequence. Define
  \begin{align}
    \sigma_k \idem_\bolda & =
      - Q_k s_k  Q_k \idem_\bolda  + \frac{1}{b_k} \idempotente \idem_\bolda, &
    \hat \sigma_k \idem_\bolda & =
     - Q_k \hat s_k Q_k \idem_\bolda, \label{eq:13}\\
    \tau_k  \idem_\bolda& =
      Q_k e_k Q_k \idem_\bolda, &
    \hat \tau_k \idem_\bolda & =
     Q_k \hat e_k Q_k \idempotente \idem_\bolda.\label{eq:111}
  \end{align}
\end{definition}

We are now ready to state our main result, the following Isomorphism Theorem:

\begin{theorem}
  \label{thm:4}
  The assignments
  \begin{equation}
    \label{eq:3}
    \begin{aligned}
      \idem_\bolda & \longmapsto \idem_\bolda, &
      s_k\idem_\bolda & \longmapsto \sigma_k \idem_\bolda,
      & \hat s_k \idem_\bolda & \longmapsto \hat\sigma_k
      \idem_\bolda, \\
      & & e_k \idem_\bolda & \longmapsto \tau_k \idem_\bolda, & \hat e_k \idem_\bolda & \longmapsto
      \hat\tau_k \idem_\bolda
    \end{aligned}
  \end{equation}
define an isomorphism \(\Phi: \uBr_{r,s}(-\delta) \cong \idempotente\, \uVB_{r,s}^\cycl\,\idempotente\).
\end{theorem}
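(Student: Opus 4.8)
The plan is to verify that $\Phi$ is a well-defined algebra homomorphism and then argue it is bijective by a dimension count. For well-definedness, I would check that the images $\sigma_k\idem_\bolda$, $\hat\sigma_k\idem_\bolda$, $\tau_k\idem_\bolda$, $\hat\tau_k\idem_\bolda$ satisfy all the defining relations \ref{item:26}--\ref{item:br:16} of $\uBr_{r,s}(-\delta)$, using the (longer, non-minimal) presentation from Definition~\ref{def:7} together with the simplifications in Lemmas~\ref{lem:50} and \ref{lem:52}. The key structural input is that these images are built out of the $s_k$, $\hat s_k$, $e_k$, $\hat e_k$, $\idempotente$ and the elements $Q_k=\sqrt{b_{k+1}/b_k}\,\idempotente$, where the $b_k$, $c_k$ are polynomials in the $y_k$; so one first records the basic commutation facts: $b_k,c_k$ commute with each other and with everything not involving indices $k,k+1$; $\idempotente$ is a polynomial in the $y_j$ ($j\le r+s$); $s_k,\hat s_k,e_k,\hat e_k$ move the $y$'s according to relations \ref{item:17}--\ref{item:20}; and $\idempotente$ is central enough to be moved past $Q_j$ for $j$ away from $k$. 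The framework of Section~\ref{sec:invers-square-roots} is what makes the square roots and inverses legitimate, and the results of Section~\ref{sec:some-lemmas} (Lemmas~\ref{lem:1}, \ref{lem:3}, \ref{lem:2}, \ref{lem:9}) control how $s_k,\hat s_k,e_k,\hat e_k$ interact with the generalized eigenspace decomposition, in particular with the idempotents $\eta_k$ whose product is $\idempotente$.

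Concretely I would proceed relation by relation, grouping them. The idempotent relations \ref{item:26}, \ref{item:27} are essentially formal once one knows $\idempotente\idem_\bolda=\idem_\bolda\idempotente$ and which $\eta_k$'s commute with which generators. The quadratic relations \ref{item:br:1} ($\sigma_k^2+\hat\sigma_k^2=\idempotente$) and \ref{item:br:6} ($\tau_k^2=-\delta\,\tau_k$) are the crucial computational ones: here $s_k^2+\hat s_k^2=1$ and $e_k^2=\omega_0 e_k$ get twisted by the conjugating factors $Q_k$ and by the correction term $\frac{1}{b_k}\idempotente$, and the ``interesting change of parameters'' (that $\omega_0=m+n\neq-\delta$) must come out exactly right; the first Reidemeister relation \eqref{eq:25} of Lemma~\ref{lem:50}, i.e. $\tau_{k+1}\sigma_k\tau_{k+1}=\tau_{k+1}$, is of the same flavour and is where the square-root factors $b_{k+1}/b_k$ telescope. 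The braid and mixed braid relations \ref{item:br:2}, \ref{item:br:8} are handled by Lemma~\ref{lem:52} and the observation that conjugation by the $Q$'s is ``almost'' an inner automorphism: one pushes the $Q_k$'s through using the eigenspace lemmas, reducing each to the corresponding relation in $\uVB^\cycl$ restricted to the small-eigenvalue part, where Lemma~\ref{lem:9} guarantees the $y$'s act by honest eigenvalues so the scalar factors can be manipulated freely. Relations \ref{item:br:9} (far commutativity) are easy since all indices are separated.

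For bijectivity, once $\Phi$ is a well-defined homomorphism, I would note that it is surjective: each generator $\sigma_k,\hat\sigma_k,\tau_k,\hat\tau_k$ together with the relation $s_k\mapsto\sigma_k$ etc. lets one solve backwards, because $b_k,Q_k$ and the idempotents $\eta_j$ are all polynomials in the $y_k$'s, which in turn (by the cyclotomic relations \eqref{eq:48}--\eqref{eq:49} and relations \ref{item:17}, \ref{item:20}) lie in the subalgebra generated by $s_k,e_k$ and friends inside $\idempotente\,\uVB^\cycl\,\idempotente$; so $\idempotente\,\uVB^\cycl\,\idempotente$ is generated by the images of the walled Brauer generators. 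Then I would compare dimensions: $\dim_\C B_{r,s}(-\delta)=\dim_\C\uBr_{r,s}(\delta)=2^{r+s}(r+s)!$ (number of oriented Brauer diagrams), while by Corollary~\ref{lem:7} and Lemma~\ref{lem:8}, $\dim_\C\idempotente\,\uVB_\bolda^\cycl\,\idempotente=\dim\End_{\mathfrak g}(F^\bolda M^\frakp(\udelta))=(r+s)!$ for each $\bolda$, hence summing over the $2^{r+s}$ orientations gives $2^{r+s}(r+s)!$ for $\idempotente\,\uVB_{r,s}^\cycl\,\idempotente$. A surjective linear map between finite-dimensional spaces of equal dimension is an isomorphism, so $\Phi$ is an isomorphism.

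The main obstacle is the verification of the quadratic relations \ref{item:br:1}, \ref{item:br:6} and the first Reidemeister relation together with the correct bookkeeping of the square-root factors $Q_k$ against the idempotent $\idempotente$: one must be careful, as the paper warns after \eqref{eq:54}, never to cancel $(x_0-a)$ against $\tfrac{1}{x_0-a}$ without retaining the truncating idempotent, and the whole point of the twist by $\frac{1}{b_k}\idempotente$ and by $\sqrt{b_{k+1}/b_k}$ is to absorb precisely the discrepancy between the two parameters. I expect that once the commutation dictionary between $\{b_k,c_k,\eta_j\}$ and $\{s_k,\hat s_k,e_k,\hat e_k\}$ is set up (essentially a reformulation of Lemmas~\ref{lem:1}--\ref{lem:9} in algebra rather than module language, applied to the regular representation of $\uVB^\cycl$), each individual relation reduces to a short formal manipulation, but there are many of them and the quadratic ones carry the real content.
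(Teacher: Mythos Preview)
Your overall architecture matches the paper's: verify relations, prove surjectivity, then compare dimensions. The well-definedness discussion is a reasonable sketch of what the paper actually does in Section~\ref{sec:proofs} (a long list of lemmas, one per relation, with \ref{item:br:6} and the first Reidemeister move carrying the real content). Two parts of your argument, however, have genuine gaps.

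\textbf{Surjectivity.} Your claim that the $y_k\idempotente$ ``lie in the subalgebra generated by $s_k,e_k$ and friends'' via the cyclotomic relations and \ref{item:17}, \ref{item:20} is circular as stated: to use those relations to express $y_{k+1}\idempotente$ in terms of $y_k\idempotente$ you need $\idempotente s_k\idempotente$ or $\idempotente \hat s_k\idempotente$ already in the image, but inverting \eqref{eq:13} to get those requires $Q_k$, which involves $y_{k+1}$. The paper breaks this loop with Lemma~\ref{lem:6}, proving directly that $\Phi\big((\xi_k-\beta_2^{a_k})\idem_\bolda\big)=-y_k\idempotente\idem_\bolda$ by induction on $k$ using the Jucys-Murphy recursion \eqref{eq:15}. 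With the $y_k\idempotente$ in hand, the paper then does \emph{not} simply ``solve backwards'': getting a general element $\idempotente x_1\dotsm x_N\idempotente\idem_\bolda$ into the image requires an induction on reduced word length, using the key Lemma~\ref{lem:49} (that $c_k\idempotente e_k\idempotente=c_k e_k\idempotente$, etc.) to strip out the intermediate idempotents. Your sketch does not supply either ingredient.

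\textbf{Dimension count.} The number of $(r,s)$-sequences is $\binom{r+s}{r}$, not $2^{r+s}$, and summing $\dim\idempotente\,\uVB_\bolda^\cycl\,\idempotente$ over $\bolda$ only gives the diagonal blocks $\idem_\bolda(\cdot)\idem_\bolda$. You still need the off-diagonal pieces $\idem_\bolda\idempotente\,\uVB^\cycl\,\idempotente\idem_{\bolda'}$ for $\bolda\neq\bolda'$. The paper handles these via \eqref{eq:24}: composing with a word in the $\hat\sigma_j$'s gives a vector-space isomorphism between the off-diagonal block and the diagonal one, so each block has dimension $(r+s)!$ and matches $\dim\idem_\bolda\uBr_{r,s}(-\delta)\idem_{\bolda'}$. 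Your count $2^{r+s}(r+s)!$ is not the dimension of either side.
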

\begin{remark}
\label{theremark}
  The main feature of our isomorphism is the change of the
  walled Brauer parameter from \(\omega_0\) to \(\delta\),
  and indeed the most important relation that we will need
  to prove is \(\tau_k^2 \idem_\bolda = -\delta \tau_k
  \idem_\bolda\).  Essentially, this amounts to check that
  \begin{equation}
    \label{eq:30}
    e_k \frac{1}{\beta^{a_k}_1 + y_k} e_k \idem_\bolda= e_k \idem_\bolda,
  \end{equation}
  see Lemma~\ref{lem:20}. By
  \cite[Proposition~2.13]{MR3244645} (following an idea from
  \cite{MR1398116} further developed in \cite{MR2235339}) we
  can take a formal variable \(u\) and write
  \begin{equation}
    \label{eq:55}
    e_k \frac{1}{u-y_k} e_k \idem_\bolda = \frac{W^\bolda_k(u)}{u} e_k \idem_\bolda,
  \end{equation}
  where \(W^\bolda_k(u)\) is a formal power series in
  \(u^{-1}\). We may now be tempted to replace \(u =
  -\beta^{a_k}_1\) and be able to compute
  \(W^\bolda_k(-\beta^{a_k}_1)=\beta_1^{a_k}\), and hence
  obtain \eqref{eq:30} from \eqref{eq:55}. Now, while this
  can be made formal in the semisimple case (by using the
  eigenvalues of \(y_k\), as done several times in
  \cite{MR2235339}), it gets much more tricky in the
  non-semisimple case. Hence we need to take another way
  using the formalism from
  Section~\ref{sec:invers-square-roots}. We believe that our
  ideas can be useful for extending arguments from
  \cite{MR2235339} to the non-semisimple case.
  \label{rem:5}
\end{remark}

\begin{remark} \label{grading}By \cite[Theorem 6.10]{MR3244645} the algebra \(\uVB_{r,s}^\cycl\) can be equipped with the structure of a graded cellular and  Koszul algebra. Exactly the same arguments as in \cite[Section 5]{ES} imply then that  \(\uBr_{r,s}(-\delta) \cong \idempotente\, \uVB_{r,s}^\cycl\,\idempotente\) is graded cellular and, in case \(\delta\not=0\), even Koszul. 
\end{remark}

\begin{proof}[Proof of Isomorphism Theorem \ref{thm:4}]
  First, we show that the elements in Definition~\ref{def:3} satisfy the defining relations of the walled Brauer algebra, and so \eqref{eq:3} give a well-defined map \(\Phi:\uBr_{r,s}(-\delta) \cong \idempotente\,\uVB_{r,s}^\cycl\,\idempotente\).  The relations~\ref{item:26} and \ref{item:27} are straightforward. The relation~\ref{item:br:1} is given by Lemmas~\ref{lem:23} and \ref{lem:48}. The relation~\ref{item:br:3} is straightforward. For the relation~\ref{item:br:4}, by Lemma~\ref{lem:52} it is sufficient to consider two possible orientations, which we check in Lemmas~\ref{lem:24} and \ref{lem:35}. The relation~\ref{item:br:6} is given by Lemma~\ref{lem:20}. The relations~\ref{item:br:9} are obvious. The relation~\ref{item:br:13} is given by Lemma~\ref{lem:32}. The relations~\ref{item:br:14} are given by Lemmas~\ref{lem:33}, \ref{lem:34}, \ref{lem:37} and \ref{lem:38}. Finally, by Lemma~\ref{lem:50} the relation~\ref{item:br:16} can be replaced by \eqref{eq:25}, which we check in Lemmas~\ref{lem:21} and \ref{lem:25}.

We now show that \(\Phi\) is surjective. From \cite[Proposition~6.8]{MR3244645} we know that every element of \(\uVB_{r,s}^\cycl\) is a linear combination of elements of the form \(p_1 w p_2 \idem_\bolda\), where \(p_1,p_2 \in \C[y_1,\dotsc,y_{r+s}]\) with degree \( \leq 1\) in each variable and \(w = x_1 \dotsb x_N\) where \(x_\ell \in \{s_k, e_k, \hat s_k, \hat e_k \suchthat 1 \leq k \leq r+s\}\) for \(1 \leq \ell \leq N\). We call such a presentation \(x_1 \dotsc x_N \idem_\bolda\) for \(w \idem_\bolda\) a \emph{reduced word} if \(N\) is chosen minimally. 

Since by Lemma~\ref{lem:6} below all the elements \(y_k\idempotente \idem_\bolda\) are in the image of \(\Phi\), it suffices to show that \(\idempotente x_1 \dotsb x_N \idempotente \idem_\bolda \in \image \Phi\) for any reduced word \(x_1 \dotsb x_N \idem_\bolda\).
We show this by induction on the number \( N + \# \{k \suchthat x_k = \hat s_k\}\).


For the base case of the induction the claim is clear, since \(y_k \idempotente \idem_\bolda \in \image \Phi\), and so also all the polynomials expressions in the \(y_k\)'s (and in particular the elements \(b_k\idempotente \idem_\bolda\), their inverses and their square roots) are in the image of \(\Phi\), and then  it follows by inverting the expressions in \eqref{eq:13} and \eqref{eq:111} that also \(\idempotente s_k \idempotente\idem_\bolda\), \(\idempotente \hat s_k \idempotente \idem_\bolda\), \(\idempotente e_k \idempotente \idem_\bolda\) and \(\idempotente \hat e_k \idempotente \idem_\bolda\) are in the image of \(\Phi\).

Let us now turn to the inductive step. Consider a reduced word \(x_1 \dotsc x_N \idem_\bolda\) and assume \(x_N \in \{s_\ell, e_\ell, \hat s_\ell, \hat e_\ell\}\) for some \(1 \leq \ell \leq r+r\). By induction, we know that \(\idempotente x_1 \dotsc x_{N-1} \idempotente c_\ell \idempotente x_N \idempotente \idem_\bolda\) is in the image of \(\Phi\), since all three factors are. We can then move out the idempotent \(\idempotente\) thanks to Lemma~\ref{lem:49} below and we have 
\begin{equation}
  \label{eq:57}
\begin{aligned}[t]
&  \idempotente x_1 \dotsb x_{N-1} \idempotente c_\ell \idempotente x_N \idempotente \idem_\bolda =   \idempotente x_1 \dotsb x_{N-1}  c_\ell  x_N \idempotente  \idem_\bolda \\
 & \qquad=  \beta_1 \idempotente x_1 \dotsb x_{N-1}    x_N \idempotente  \idem_\bolda - \idempotente x_1 \dotsb x_{N-1}  y_\ell  x_N \idempotente  \idem_\bolda \\
 &   \qquad = \beta_1 \idempotente x_1 \dotsb x_{N-1}    x_N \idempotente  \idem_\bolda \pm
  \begin{cases}
    \idempotente x_1 \dotsb x_{N-1}    x_N y_j \idempotente  \idem_\bolda &\text{for some \(j\), or}\\
    y_j \idempotente x_1 \dotsb x_{N-1}    x_N  \idempotente  \idem_\bolda &\text{for some \(j\)}
  \end{cases}\\
  & \qquad \qquad + \text{smaller terms,}
\end{aligned}
\end{equation}
where \(\beta_1\) can be either \(\beta_1^\uparrow\) or \(\beta_1^\downarrow\), depending on \(\bolda\) and on \(x_N\).
Here the last equality is possible because the word was assumed to be reduced, hence the element \(y_\ell\) can be moved out to one of the two sides by using repeatedly relations (Br\ref{item:17}) and (Br\ref{item:20}). By looking at these relations, we see that the smaller terms do not contain any \(y_k\)'s, and are moreover either of length smaller than \(N\) or of length \(N\) but with strictly less \(\hat s_k\)'s than the word \(\idempotente x_1 \dotsb x_N \idempotente \idem_\bolda\). Hence by induction we can assume that all the smaller terms are contained in the image of \(\Phi\). Since \(\frac{1}{\beta \pm y_j}\idempotente \in \image \Phi\), it follows that \(\idempotente x_1 \dotsb x_N \idempotente \idem_\bolda \in \image \Phi\). This concludes the proof of surjectivity.

Finally, it follows that \(\Phi\) is an isomorphism by comparing dimensions: the dimension of \(\idem_\bolda \uBr_{r+t}(-\delta)\idem_{\bolda'}\) in the walled Brauer category is well known to be \((r+t)!\), \cite[(2.2)]{MR2781018}, and we are left to show that this is also the dimension of \(\idem_\bolda\idempotente\,\uVB_{r,t}^\cycl\,\idempotente \idem_{\bolda}\). If \(\bolda=\bolda'\)
then this follows from Corollary~\ref{lem:7} and Lemma \ref{lem:8}.
Otherwise, let  \(\sfw=\sfs_{i_1} \dotsm \sfs_{i_N}\) be a reduced expression of a permutation $\sfw$ such that \(\sfw\bolda = \bolda'\). Then pre-composing with \(\thats_{i_1} \dotsm \thats_{i_N}\idem_\bolda\) defines an isomorphism of vector spaces
  \begin{equation}
    \label{eq:24}
    \idem_\bolda \idempotente \,\uVB_{r,t}^\cycl\,\idempotente \idem_{\bolda'}  \cong      \idempotente \,\uVB_{\bolda}^\cycl\, \idempotente
  \end{equation}
  with inverse \(\thats_{i_N} \dotsm \thats_{i_1}\), and we are done.
\end{proof}

\begin{remark}
  \label{rem:6}
   Theorem~\ref{thm:4} can be easily extended to an isomorphism between the oriented Brauer category \(\mathucal{OB}\) (see Remark~\ref{rem:7}) and an idempotent truncation of the cyclotomic oriented Brauer category \(\mathucal{OB}^f\) of level 2 (which, as defined in \cite{Betal}, is a quotient of \(\mathucal{AOB}\), see Remark~\ref{rem:3}). Using the notation \(\mathfrak{c}_k\) and \(\mathfrak{d}_k\)  from \cite[Section~1]{Betal} for the cup and the cap with \(k\) strands to the left of the cup/cap (so that \(\hat e_k \idem_\bolda = \mathfrak{c}_k \mathfrak{d}_k \idem_\bolda\), where \(a_k,a_{k+1}=\down \up\)), one just needs to extend the isomorphism \eqref{eq:3} by setting
  \begin{equation}
    \label{eq:51}
    \idem_{\sfs_k \bolda} \mathfrak{c}_k \mapsto \idem_{\sfs_k \bolda} Q_k \mathfrak{c}_k  \qquad \text{and} \qquad \mathfrak{d}_k \idem_\bolda \mapsto \mathfrak{d}_k Q_k \idem_\bolda.
  \end{equation}
The only thing left to prove is that the adjunction relations \cite[(1.4) and (1.5)]{Betal} hold, but this is straightforward.
\end{remark}

\begin{lemma}
  \label{lem:6}
  We have \(\Phi\big((\xi_k-\beta_2^{a_k})\idem_\bolda\big) = -y_k\idempotente \idem_\bolda\) for all \(k\) and \(\bolda\).
\end{lemma}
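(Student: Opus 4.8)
The plan is to induct on $k$, using the recursion \eqref{eq:15} that defines the Jucys--Murphy elements, together with the fact --- which we may take as already established --- that the assignments \eqref{eq:3} give a well-defined homomorphism $\Phi$.

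For the base case $k=1$ one has $\xi_1\idem_\bolda=0$, so the assertion reads $y_1\idempotente\idem_\bolda=\beta_2^{a_1}\idempotente\idem_\bolda$. I would deduce this from the cyclotomic relation \eqref{eq:48} (if $a_1=\up$) or \eqref{eq:49} (if $a_1=\down$): by Assumption~\ref{ass:1} we have $\beta_1^{a_1}\neq\beta_2^{a_1}$, so $(y_1-\beta_1^{a_1})(y_1-\beta_2^{a_1})\idem_\bolda=0$ forces $y_1\idem_\bolda$ to act semisimply with eigenvalues among $\{\beta_1^{a_1},\beta_2^{a_1}\}$; since $\eta_1\idem_\bolda$ annihilates the generalized $\beta_1^{a_1}$--eigenspace and $\idempotente\idem_\bolda=\eta_1\eta_2\dotsm\eta_{r+t}\idem_\bolda$ with $\eta_1$ a polynomial in $y_1$ commuting with the other $\eta_j$, this gives $(y_1-\beta_2^{a_1})\idempotente\idem_\bolda=0$.

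For the inductive step, suppose $\Phi(\xi_k\idem_\bolda)=(\beta_2^{a_k}-y_k)\idempotente\idem_\bolda$ for all $\bolda$. Applying $\Phi$ to \eqref{eq:15} and substituting the inductive hypothesis together with Definition~\ref{def:3}, the claim for $k+1$ turns into an identity in $\idempotente\uVB^\cycl\idempotente$. In the case $a_k=a_{k+1}$ (so $\sfs_k\bolda=\bolda$ and $\beta_2^{a_{k+1}}=\beta_2^{a_k}=:\gamma$) one obtains $\Phi(\xi_{k+1}\idem_\bolda)=\sigma_k(\gamma-y_k)\idempotente\,\sigma_k\idem_\bolda+\sigma_k\idem_\bolda$, and I would first establish the intertwining identity
\[
  \sigma_k(\gamma-y_k)\idempotente\idem_\bolda-(\gamma-y_{k+1})\idempotente\,\sigma_k\idem_\bolda=-\idempotente\idem_\bolda .
\]
This follows by expanding $\sigma_k\idem_\bolda=-Q_k s_k Q_k\idem_\bolda+b_k^{-1}\idempotente\idem_\bolda$, using that $Q_k$ commutes with the mutually commuting elements $y_k,y_{k+1}$ and that $Q_k^2=(b_{k+1}/b_k)\idempotente$, commuting the crossing $s_k$ past $\gamma-y_k$ via relation (Br\ref{item:17}) (which on $\idem_\bolda$ reads $s_k(\gamma-y_k)\idem_\bolda=(\gamma-y_{k+1})s_k\idem_\bolda+\idem_\bolda$), and finally simplifying by the elementary equality $-b_{k+1}/b_k+b_k^{-1}(y_{k+1}-y_k)=-b_k/b_k=-1$, which holds because $b_k=\beta_1^{\operatorname{rev}(a_k)}+y_k$ and $b_{k+1}=\beta_1^{\operatorname{rev}(a_k)}+y_{k+1}$ here. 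Right--multiplying the displayed identity by $\sigma_k\idem_\bolda$ and using $\sigma_k^2\idem_\bolda=\idempotente\idem_\bolda$ --- i.e.\ that $\Phi$ respects relation~\ref{item:br:1}, since $s_k^2\idem_\bolda=\idem_\bolda$ when $a_k=a_{k+1}$ --- then yields $\Phi(\xi_{k+1}\idem_\bolda)=(\gamma-y_{k+1})\idempotente\idem_\bolda$. The case $a_k\neq a_{k+1}$ is handled in the same spirit: one uses $\hat\sigma_k\idem_\bolda=-Q_k\hat s_k Q_k\idem_\bolda$ and $\hat s_k^2\idem_\bolda=\idem_\bolda$, relation (Br\ref{item:19}) to commute $\hat s_k$ past the $y$'s, and relation (Br\ref{item:20}), namely $\dot e_k(y_k+y_{k+1})\idempotente=0$, to absorb the extra term coming from the $e_k$ (or $\hat e_k$) summand in \eqref{eq:15}; here $\beta_2^{a_{k+1}}=\beta_2^{\operatorname{rev}(a_k)}$ and the computation closes using the numerical relations \eqref{eq:56} among the parameters.

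The one genuinely delicate point is the bookkeeping of the square--root elements $Q_k$: the entire computation must be performed inside the truncation $\idempotente\uVB^\cycl\idempotente$, where $b_k=\beta_1^{\operatorname{rev}(a_k)}+y_k$ is invertible --- its only candidate eigenvalue $-\beta_1^{\operatorname{rev}(a_k)}$ is a \emph{large} value, hence by Remark~\ref{rem:12} does not occur on $\idempotente\idem_\bolda$ --- and the rules used above for commuting $Q_k$ past polynomials in the $y$'s and for squaring $Q_k$ are precisely those of Section~\ref{sec:invers-square-roots}. Once this is set up, each of the two cases is a short computation of the same flavour as those in the relation--checking lemmas (for instance Lemma~\ref{lem:20}).
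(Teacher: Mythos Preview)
Your argument is correct and follows the same inductive scheme as the paper: base case via the cyclotomic relation, inductive step by expanding the recursion \eqref{eq:15} and reducing to the identities $\sigma_k^2\idem_\bolda=\idempotente\idem_\bolda$ and $\thats_k^2\idem_\bolda=\idempotente\idem_\bolda$ (together with $\thate_k\thats_k=\te_k$). Your packaging of the $a_k=a_{k+1}$ step as the intertwining identity $\sigma_k(\gamma-y_k)-(\gamma-y_{k+1})\sigma_k=-\idempotente$ is exactly what the paper's direct expansion \eqref{eq:16} amounts to, and your verification of the scalar identity $-b_{k+1}/b_k+(y_{k+1}-y_k)/b_k=-1$ is the same computation.

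One small correction: in the case $a_k\neq a_{k+1}$ you do \emph{not} need the numerical relations \eqref{eq:56}. The inductive hypothesis already gives $\Phi(\xi_k\idem_{\sfs_k\bolda})=(\beta_2^{(\sfs_k\bolda)_k}-y_k)\idempotente\idem_{\sfs_k\bolda}=(\beta_2^{a_{k+1}}-y_k)\idempotente\idem_{\sfs_k\bolda}$, so the constant $\beta_2^{a_{k+1}}$ matches on the nose and drops out. After that, the computation closes purely algebraically: $\hat\sigma_k y_k=y_{k+1}\hat\sigma_k-\hat\tau_k$ from \ref{item:19}, then right-multiplying by $\hat\sigma_k$ and using $\hat\sigma_k^2=\idempotente$ and $\hat\tau_k\hat\sigma_k=\tau_k$ (both consequences of $\Phi$ being well-defined) gives $\hat\sigma_k y_k\hat\sigma_k=y_{k+1}\idempotente-\tau_k$, whence $\Phi((\xi_{k+1}-\beta_2^{a_{k+1}})\idem_\bolda)=-y_{k+1}\idempotente\idem_\bolda$ directly, with no reference to the specific values of the $\beta$'s.
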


\begin{proof}
  We prove the statement by induction. First, consider the case \(k=1\). Then \(- y_1 \idempotente \idem_\bolda = - \beta_2^{a_1} \idempotente \idem_\bolda\) and \(\Phi((\xi_1  -\beta_2^{a_k}) \idem_\bolda)= \Phi(- \beta_2^{a_1}\idem_\bolda  ) =-\beta_2^{a_1}\idempotente \idem_\bolda\). Let us now show the inductive step. First suppose that \(a_k=a_{k+1}\). Then
  \begin{equation}
    \label{eq:16}
    \begin{aligned}[t]
     & \Phi\big((\xi_{k+1}-\beta_2^{a_{k+1}})\idem_\bolda\big) =
      \Phi\big(s_k (\xi_k -\beta_2^{a_{k}}) s_k
      \idem_\bolda + s_k \idem_\bolda\big)\\
      &\qquad = - \sigma_k y_k \sigma_k \idem_\bolda + \sigma_k \idem_\bolda\\
      & \qquad = Q_k s_k Q_k y_k \sigma_k \idem_\bolda - \frac{1}{b_k}y_k \sigma_k \idem_\bolda + \sigma_k \idem_\bolda\\
      & \qquad = Q_k y_{k+1} s_k Q_k  \sigma_k \idem_\bolda - Q_k^2 \sigma_k \idem_\bolda - \frac{1}{b_k}y_k \sigma_k \idem_\bolda + \sigma_k \idem_\bolda\\
      & \qquad =   - y_{k+1}\left( - Q_k s_k Q_k     +\frac{1}{b_k}\right) \sigma_k \idem_\bolda + \frac{y_{k+1}}{b_k} \sigma_k \idem_\bolda - \frac{b_{k+1} + y_k-1}{b_k}  \sigma_k \idem_\bolda  \\
      & \qquad =- y_{k+1} \sigma_k^2 \idem_\bolda = -y_{k+1} \idempotente \idem_\bolda.
    \end{aligned}
  \end{equation}
  Otherwise, if \(a_k \neq a_{k+1}\) then
  \begin{equation}
    \label{eq:17}
    \begin{aligned}[t]
     & \Phi\big((\xi_{k+1}-\beta_2^{a_{k+1}})\idem_\bolda\big) =
      \Phi\big(\hat s_k (\xi_k  -\beta_2^{a_{k+1}}) \hat s_k
      \idem_\bolda - e_k \idem_\bolda\big)\\
      &\qquad = - \hat\sigma_k y_k \hat\sigma_k \idem_\bolda - \tilde e_k \idem_\bolda\\
      & \qquad = Q_k \hat s_k Q_k y_k \tilde {\hat s}_k \idem_\bolda - \tilde e_k \idem_\bolda\\
      & \qquad = Q_k y_{k+1} \hat s_k Q_k  \tilde {\hat s}_k \idem_\bolda + Q_k \tilde e_k Q_k  \tilde {\hat s}_k\idem_\bolda - \tilde e_k \idem_\bolda\\
      & \qquad =  - y_{k+1} \hat\sigma_k \tilde {\hat s}_k \idem_\bolda + \tilde{\hat e}_k  \hat\sigma_k \idem_\bolda- \tilde e_k \idem_\bolda\\
      & \qquad = -y_{k+1} \idempotente \idem_\bolda.
    \end{aligned}
  \end{equation}
  The lemma follows.
\end{proof}

\section{The center of the walled Brauer algebra}
\label{sec:center-walled-brauer}

As an application, we investigate the center of the walled Brauer category (and of the walled Brauer algebras). Let \(R=\C[y_1,\dotsc,y_{r+t}]\). We recall the following definition:
\begin{definition}[See also \cite{MR3192543}]
  We say that a polynomial \(p
  \in R\) satisfies the \emph{\(Q\)--cancellation property} with respect to the
  variables \(y_1,y_2\) if
  \begin{equation}
    \label{eq:o223}
    p(y_1,-y_1,y_3,\ldots,y_{r+t}) = p(0,0,y_3,\ldots,y_{r+t}).
  \end{equation}
  Analogously we say that \(p\) satisfies the \(Q\)--cancellation property with respect
  to the variables \(y_k,y_l\) if \(w\cdot p\) satisfies \eqref{eq:o223},
  where \(w \in \bbS_{r+t}\) is the permutation that exchanges \(1\) with
  \(k\) and \(2\) with \(l\) and \(\bbS_{r+t}\) acts on \(R\) permuting the
  variables.\label{def:o8}
\end{definition}

In \cite[Theorem~4.2]{MR3244645} it is shown that the  center of \(\uVB_{r,t}(\bomega)\) is isomorphic
 the subring of \((\bbS_r \times \bbS_t)\)--invariant polynomials \(p \in R^{{\bbS_r \times \bbS_{t}}}\) which satisfy the \(Q\)--cancellation property with respect to the variables \(y_r,y_{r+1}\). The isomorphism is given by the map
  \begin{equation}
    \label{eq:o258}
    p \mapsto \sum_{\bolda \in \Seq_{r,t}} (w_\bolda \cdot p) \id_\bolda,
  \end{equation}
where for each \(\bolda \in \Seq_{r,t}\) the element \(w_\bolda\) is a permutation such that \(w_\bolda \cdot (\up^r, \down^t) = \bolda\).

\begin{corollary}
  \label{cor:1}
  Let \(p \in R^{{\bbS_r \times \bbS_{t}}}\) be a polynomial which satisfies the \(Q\)--cancellation property with respect to the variables \(y_r,y_{r+1}\). Then the element
  \begin{equation}
    \label{eq:41}
    \sum_{\bolda \in \Seq_{r,t}} (w_\bolda \cdot p) (\xi_1,\dotsc,\xi_{r+t}) \id_\bolda
  \end{equation}
  is central in \(\Br_{r+t}(-\delta)\). In particular, \(p(\xi_1,\dotsc,\xi_{r+t})\) is central in the  walled Brauer algebra \(\Br_{\up^r,\down^t}(-\delta)\).
\end{corollary}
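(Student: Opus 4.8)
The plan is to obtain this not from the isomorphism theorem directly, but from the already-established description of the centre of the \emph{degenerate affine} walled Brauer algebra recalled above, together with the specialisation homomorphism of Remark~\ref{rem:4}.

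First I would specialise all affine parameters to $\bomega=(-\delta,-\delta,\dotsc)$ and regard $p$ as a polynomial in $y_1,\dotsc,y_{r+t}$. Since $p\in R^{\bbS_r\times\bbS_t}$ satisfies the $Q$--cancellation property with respect to $y_r,y_{r+1}$, the isomorphism \eqref{eq:o258} of \cite[Theorem~4.2]{MR3244645} (the statement recalled above is independent of the choice of $\bomega$) shows that the element
\[
  P=\sum_{\bolda\in\Seq_{r,t}}(w_\bolda\cdot p)(y_1,\dotsc,y_{r+t})\,\idem_\bolda
\]
lies in the centre of $\uVB_{r,t}(\bomega)$. Next I would invoke Remark~\ref{rem:4}, which (after replacing $\delta$ by $-\delta$ there) provides a surjective algebra homomorphism $\psi\colon\uVB_{r,t}(\bomega)\twoheadrightarrow\Br_{r,t}(-\delta)$ that is the identity on the generators \eqref{eq:339} and sends $y_i\,\idem_\bolda\mapsto\xi_i\,\idem_\bolda$, hence $q(y_1,\dotsc,y_{r+t})\,\idem_\bolda\mapsto q(\xi_1,\dotsc,\xi_{r+t})\,\idem_\bolda$ for every polynomial $q$. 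A surjective algebra homomorphism carries central elements to central elements (if $z$ is central and $b=\psi(a)$, then $\psi(z)b=\psi(za)=\psi(az)=b\,\psi(z)$), so $\psi(P)$ is central in $\Br_{r,t}(-\delta)$; but $\psi(P)$ is precisely the element \eqref{eq:41}, which establishes the first assertion.

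For the ``in particular'' part I would let $z$ denote the element \eqref{eq:41} and multiply by the idempotent $\idem_{\up^r\down^t}$. Centrality of $z$ gives $\idem_{\up^r\down^t}z=z\,\idem_{\up^r\down^t}=\idem_{\up^r\down^t}z\,\idem_{\up^r\down^t}$, so this element lies in, and is central in, the corner algebra $\idem_{\up^r\down^t}\Br_{r,t}(-\delta)\,\idem_{\up^r\down^t}$, which by Remark~\ref{rem:7} is the usual walled Brauer algebra $\Br_{\up^r,\down^t}(-\delta)$. Pairwise orthogonality of the $\idem_\bolda$ collapses $z\,\idem_{\up^r\down^t}$ to $(w_{\up^r\down^t}\cdot p)(\xi_1,\dotsc,\xi_{r+t})\,\idem_{\up^r\down^t}$, and choosing $w_{\up^r\down^t}$ to be the identity permutation yields $p(\xi_1,\dotsc,\xi_{r+t})\,\idem_{\up^r\down^t}$, as required.

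The argument is almost entirely formal, so there is no serious obstacle; the only point demanding care is checking that the two cited ingredients — the centre computation \cite[Theorem~4.2]{MR3244645} and the specialisation map of Remark~\ref{rem:4} — do remain available for the (in general non-semisimple) value $\bomega=(-\delta,-\delta,\dotsc)$ used here. I would also emphasise that one should \emph{not} attempt to prove the statement by transporting the centre of $\uVB_{r,t}^\cycl$ across the isomorphism $\Phi$ of Theorem~\ref{thm:4}: the centre of the idempotent truncation $\idempotente\,\uVB_{r,t}^\cycl\,\idempotente$ is in general strictly larger than the image of the centre of $\uVB_{r,t}^\cycl$, which is exactly why the central elements produced here are \emph{not} generated by symmetric polynomials in the Jucys--Murphy elements (cf.\ Remark~\ref{rem:2}).
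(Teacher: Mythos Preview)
Your main argument via the surjection of Remark~\ref{rem:4} is correct and takes a genuinely different route from the paper. The paper instead pushes the central element of $\uVB_{r,t}(\bomega)$ down to the cyclotomic quotient $\uVB_{r,t}^\cycl$, then into the idempotent truncation $\idempotente\,\uVB_{r,t}^\cycl\,\idempotente$, and finally across the isomorphism $\Phi$ of Theorem~\ref{thm:4}; the identification with \eqref{eq:41} then rests on Lemma~\ref{lem:6}, which says $\Phi(\xi_k\idem_\bolda)=(\beta_2^{a_k}-y_k)\idempotente\idem_\bolda$. Your approach is more elementary---it does not invoke the main theorem at all---and it sidesteps the affine change of variables that Lemma~\ref{lem:6} forces on the paper's route. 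What the paper's route buys is that the corollary is exhibited as a genuine consequence of the isomorphism $\Phi$, tying it to the rest of the paper. (A minor point: the parameters in Remark~\ref{rem:4} should really be $\omega_0=-\delta$ and $\omega_i=0$ for $i\geq 1$, since $\xi_1=0$; this does not affect your argument because \cite[Theorem~4.2]{MR3244645} holds for every choice of $\bomega$.)

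Your closing caveat, however, is misdirected. There is no obstruction to proving Corollary~\ref{cor:1} via $\Phi$: if $P$ is central in $\uVB_{r,t}^\cycl$ then $\idempotente P=\idempotente P\idempotente$ is automatically central in the truncation, and $\Phi^{-1}$ transports it to a central element of $\Br_{r,t}(-\delta)$. Your observation that the centre of the truncation can be strictly larger than the image of the centre of $\uVB_{r,t}^\cycl$ is correct, but it concerns only the \emph{converse} question---whether the elements so produced exhaust the centre---which is exactly the content of Conjecture~\ref{conj:1}, not of the corollary. Likewise the link you draw to Remark~\ref{rem:2} is not quite right: that remark shows that symmetric polynomials \emph{without} the $Q$--cancellation property fail to be central, which is independent of the truncation phenomenon you describe.
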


\begin{proof}
  The element~\eqref{eq:41} corresponds, under the isomorphism from Theorem~\ref{thm:4}, to the image of \(p\) under \eqref{eq:o258} in \(\uVB_{r,t}^\cycl\), which is central by \cite[Theorem~4.2]{MR3244645}.
\end{proof}

\begin{remark}\label{rem:2}
  In \cite[Remark~2.6]{MR2955190}, it is conjectured that
  any element in the center of the walled Brauer category
  can be expressed as a symmetric polynomial in the
  Jucys-Murphy elements. Although the definition of the
  Jucys-Murphy elements of \cite{MR2955190} is slightly
  different from ours, Corollary~\ref{cor:1} above suggests
  that the above mentioned conjecture points into the wrong
  direction. Indeed, we believe the following gives a
  counterexample.

  Consider the walled Brauer algebra \(\Br_{\up\up\down}(\delta)\). Note that in the notation of \cite{MR2955190} our \(\up\) corresponds to \(E\) and our \(\down\) corresponds to an \(F\). The Jucys-Murphy elements of \cite{MR2955190}, using the notation there, are 
  \begin{align}
    \label{eq:42}
    x_1^{EEF} & = 0, \\
    x_2^{EEF} & = \iota_{2,0}^{2,1} (x_2^{EE}) = \iota_{2,0}^{2,1} (x_{1,0}^{2,0}) =  \iota_{2,0}^{2,1} ( \overline{(1,2)}) = (1,2),\\
    x_3^{EEF} & = x_{2,0}^{2,1} = \overline{(1,3)} + \overline{(2,3)} = -(1,3) - (2,3).
  \end{align}
  In our notation, this translates as
  \begin{equation}
    \label{eq:43}
    x_2^{EEF} = s_1 \idem_{\up\up\down}, \qquad x_3^{EEF} = - s_1e_2s_1 \idem_{\up\up\down} - e_2 \idem_{\up\up\down}.
  \end{equation}
  It is easy to check that \(x_2^{EEF}+x_3^{EEF}=x_1^{EEF}+x_2^{EEF}+x_3^{EEF}\) is indeed central. On the other hand, if we consider the second elementary symmetric polynomial in the Jucys-Murphy elements we obtain 
  \begin{equation}
    \label{eq:44}
    x_2^{EEF}x_3^{EEF} = - e_2s_1\idem_{\up\up\down} - s_1 e_2\idem_{\up\up\down}
  \end{equation}
  and
  \begin{align}
    \label{eq:45}
    x_2^{EEF}x_3^{EEF} e_2 \idem_{\up\up\down} & = - e_2s_1e_2\idem_{\up\up\down} - s_1 e_2e_2\idem_{\up\up\down} = - e_2 \idem_{\up\up\down} - \delta s_1 e_2 \idem_{\up\up\down}, \\
    e_2 x_2^{EEF} x_3^{EEF} \idem_{\up\up\down} & = - e_2e_2s_1\idem_{\up\up\down} - e_2s_1 e_2\idem_{\up\up\down} = -\delta e_2 s_1 \idem_{\up\up\down} - e_2 \idem_{\up \up \down}.
  \end{align}
  Since \(e_2 s_1 \idem_{\up\up\down} \neq s_1 e_2 \idem_{\up\up\down}\), we have that \(x_2^{EEF}x_3^{EEF} \) does not commute with \(e_2 \idem_{\up\up\down}\). In our picture, this depends on the fact that the polynomial \(y_2y_3\) does not satisfy the \(Q\)--cancellation property with respect to \(y_2,y_3\).
\end{remark}

We conjecture the following:
\begin{conjecture}
  \label{conj:1}
  The center of the walled Brauer category \(\Br_{r,t}(\delta)\) is the subalgebra generated by the elements \eqref{eq:41}.
\end{conjecture}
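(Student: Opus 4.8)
Write $\mathcal{P}\subseteq R=\C[y_1,\dots,y_{r+t}]$ for the ring of $(\bbS_r\times\bbS_t)$--invariant polynomials satisfying the $Q$--cancellation property with respect to $y_r,y_{r+1}$, so that by \cite[Theorem~4.2]{MR3244645} the assignment $q\mapsto\sum_{\bolda}(w_\bolda\cdot q)\id_\bolda$ gives an isomorphism $\mathcal{P}\xrightarrow{\sim}Z(\uVB_{r,t}(\bomega))$. Composing with the projection $\uVB_{r,t}(\bomega)\twoheadrightarrow\uVB_{r,t}^\cycl$ and with the idempotent truncation $z\mapsto\idempotente z\idempotente$ yields a homomorphism $\mathcal{P}\to Z(\idempotente\,\uVB_{r,t}^\cycl\,\idempotente)$. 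Using Lemma~\ref{lem:6}, i.e.\ the substitution $y_k\mapsto\beta_2^{a_k}-y_k$, one checks that under the isomorphism $\Phi$ of Theorem~\ref{thm:4} this homomorphism is identified with the map $\Psi\colon\mathcal{P}\to\Br_{r,t}(-\delta)$, $p\mapsto\sum_{\bolda}(w_\bolda\cdot p)(\xi_1,\dots,\xi_{r+t})\id_\bolda$ of Corollary~\ref{cor:1}; here one uses crucially that $\beta_2^\up+\beta_2^\down=0$, which is exactly what makes the $Q$--cancellation property compatible with that substitution. Since the subalgebra appearing in Conjecture~\ref{conj:1} is precisely the image of $\Psi$, and Corollary~\ref{cor:1} already provides the inclusion $\image\Psi\subseteq Z(\Br_{r,t}(-\delta))$, the conjecture is equivalent to the \emph{surjectivity} of the composite $Z(\uVB_{r,t}(\bomega))\to Z(\idempotente\,\uVB_{r,t}^\cycl\,\idempotente)$: every central element of the idempotent truncation should lift to a central element of the degenerate affine walled Brauer category.

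I would establish this via two surjectivity statements: \textbf{(S1)}~$Z(\uVB_{r,t}(\bomega))\twoheadrightarrow Z(\uVB_{r,t}^\cycl)$ and \textbf{(S2)}~$Z(\uVB_{r,t}^\cycl)\twoheadrightarrow Z(\idempotente\,\uVB_{r,t}^\cycl\,\idempotente)$. For \textbf{(S1)} I would use the Lie-theoretic model of Theorem~\ref{thm:1}, $\uVB_{r,t}^\cycl\cong\End_{\gl_{m+n}}\big(\bigoplus_{\bolda}M^\frakp(\udelta)\otimes V^\bolda\big)$: a central element is a natural transformation of the identity on the additive hull of these tensor products, and the theory of projective functors on category $\catO$ (combined with the fact, used in the proof of Lemma~\ref{lem:8}, that $M^\frakp(\udelta)$ is projective in $\overline\catO(m,n)$) should identify these with the image of $Z(\uVB_{r,t}(\bomega))$; alternatively one can argue inside the algebra, using the reduced-word spanning set \cite[Proposition~6.8]{MR3244645} (as in the surjectivity part of the proof of Theorem~\ref{thm:4}) to show that a central element of $\uVB_{r,t}^\cycl$ must be a symmetrised polynomial in the $y_k$. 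For \textbf{(S2)} I would use Corollary~\ref{lem:7}, $\idempotente\,\uVB_{r,t}^\cycl\,\idempotente\cong\End_{\mathfrak{g}}\big(\bigoplus_{\bolda}F^\bolda M^\frakp(\udelta)\big)$ inside $\overline\catO(m,n)$: a central element is a natural transformation of the identity on $\mathrm{add}\big(\bigoplus_{\bolda}F^\bolda M^\frakp(\udelta)\big)$, which one must extend to $\mathrm{add}\big(\bigoplus_{\bolda}M^\frakp(\udelta)\otimes V^\bolda\big)$. The key observation is that each indecomposable summand of $M^\frakp(\udelta)\otimes V^\bolda$ is either killed by $\pi$ or is already a summand of some $F^{\bolda'}M^\frakp(\udelta)$, and the transition morphisms between these modules are generated, through the biadjoint pairs $(\otimes V^\up,\otimes V^\down)$ and $(\iota,\pi)$ used in Lemma~\ref{lem:8}, by morphisms that already live in the truncation; a natural transformation of the identity on the smaller additive category then has a unique compatible extension.

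The genuine obstacle is \textbf{(S2)}. Both passing to a cyclotomic quotient and passing to an idempotent truncation can, in principle, \emph{enlarge} the centre, and Remark~\ref{rem:2} exhibits precisely such an unexpected central element of the walled Brauer category: its centre is strictly larger than the ring of symmetric functions in the Jucys--Murphy elements, and the content of Conjecture~\ref{conj:1} is that the $Q$--cancellation relation accounts for \emph{all} of this surplus. Concretely, $\bigoplus_{\bolda}F^\bolda M^\frakp(\udelta)$ is \emph{not} a projective generator of the blocks of $\overline\catO(m,n)$ that it meets, so the centre of its endomorphism ring is not just the product of the block centres, and one must control natural transformations of the identity on a non-saturated additive subcategory. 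An alternative, and perhaps cleaner, route to the whole statement is a dimension count: compute $\dim Z(\Br_{r,t}(-\delta))$ directly --- via the graded cellular, and (for $\delta\neq0$) Koszul, structure of Remark~\ref{grading}, or by realising this centre as the cohomology ring of a Spaltenstein-type variety in the spirit of \cite{StSpringer} as suggested in the introduction --- and compare it with the Hilbert series of $\image\Psi$, the explicit finite-dimensional quotient of $\mathcal{P}$ by the kernel of evaluation at the $\xi_k$. I expect the $Q$--cancellation condition to translate geometrically into precisely the relation imposed at the codimension-one wall, so that the two Hilbert series agree and the inclusion $\image\Psi\subseteq Z(\Br_{r,t}(-\delta))$ becomes an equality.
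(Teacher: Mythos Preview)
The statement is labelled \emph{Conjecture} in the paper and is not proved there; the paper establishes only the inclusion $\image\Psi\subseteq Z(\Br_{r,t}(-\delta))$ via Corollary~\ref{cor:1}. There is no paper proof to compare your proposal against.

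Your proposal is a research outline rather than a proof, and you yourself locate the genuine gap: step \textbf{(S2)}. Idempotent truncation can enlarge the centre, and your sketch for extending a natural transformation of the identity from $\mathrm{add}\big(\bigoplus_\bolda F^\bolda M^\frakp(\udelta)\big)$ to the larger additive category is an assertion, not an argument --- the claim that the relevant transition morphisms are ``generated by morphisms that already live in the truncation'' is precisely the content that would need to be established, and it is far from clear (as you note, $\bigoplus_\bolda F^\bolda M^\frakp(\udelta)$ is not a projective generator of the blocks it meets). The alternative dimension-count route is likewise a programme: neither $\dim Z(\Br_{r,t}(-\delta))$ nor the Hilbert series of $\image\Psi$ is computed anywhere, and the geometric realisation you invoke is exactly the open problem flagged in the paper's introduction. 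So nothing here closes the conjecture.

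One minor slip in your first paragraph: you assert $\beta_2^\up+\beta_2^\down=0$, but \eqref{eq:56} gives $\beta_2^\up+\beta_2^\down=\delta$. The correspondence used in Corollary~\ref{cor:1} still goes through (directly via $\Phi$ and Lemma~\ref{lem:6}), but your stated reason for the compatibility of the $Q$--cancellation property with the affine substitution $y_k\mapsto\beta_2^{a_k}-y_k$ is not correct as written.
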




\section{The proof of the main Theorem~\ref{thm:4}}
\label{sec:proofs}
We collect in this section the lemmas we used in the proof of our main theorem. If not stated explicitly we always assume the indices \(k-1,k,k+1\) appearing in the lemmas to be from $J$ such that the expressions make sense.

First, observe that by our partition calculus we have
\begin{equation}
  \label{eq:328}
  e_k \idempotente_{k+1} \idem_\bolda = e_k \idempotente_k \idem_\bolda.
\end{equation}
Indeed,  if the l.h.s.\ of \eqref{eq:328} on some composition factor is nontrivial then the generalized eigenvalue of \(y_{k+1}\) is the opposite of the generalized eigenvalue of \(y_k\). Forcing the generalized eigenvalue of \(y_k\) to be small also implies that the generalized eigenvalue of \(y_{k+1}\) is small.

Moreover, we note that  \(y_{k+1} \idempotente_k \idem_\bolda\) has only one generalized big eigenvalue, namely \(\beta_1^{a_k}\), and this  is a proper eigenvalue (that is, if \((y_{k+1} - \beta_1^{a_k})^N \idempotente_k \idem_\bolda v= 0\) for some \(v\) and some \(N \geq 0\) then \((y_{k+1} - \beta_1^{a_k}) \idempotente_k \idem_\bolda v= 0\)). Indeed, this is true for \(k=0\), and follows by induction using Lemma~\ref{lem:9}.

We stress that the idempotent \(\idempotente\) commutes with the element \(s_k\idem_\bolda\):

\begin{lemma}
  \label{lem:45}
  We have \(s_k \idempotente \idem_\bolda = \idempotente s_k \idem_\bolda\) for all \(\bolda\in\Seq_{r,t}\).
\end{lemma}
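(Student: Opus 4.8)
The plan is to prove this identity by letting both sides act on modules, rather than by manipulating polynomials in the $y_i$. First, if $a_k\neq a_{k+1}$ then $s_k\idem_\bolda=0$ by the vanishing convention and both sides are zero; so from now on assume $a_k=a_{k+1}$, whence $\sfs_k\bolda=\bolda$ and $\beta_1^{a_k}=\beta_1^{a_{k+1}}$. Since $\uVB^\cycl_{r,t}$ is finite-dimensional, it suffices to check that the elements $s_k\idempotente\idem_\bolda$ and $\idempotente s_k\idem_\bolda$ act in the same way on every finite-dimensional $\uVB^\cycl_{r,t}$--module $M$ (equivalently, on the left regular module). Fix such an $M$ and decompose it into generalized simultaneous eigenspaces $M=\bigoplus_{\bolda,\boldi}\idem_\bolda M_\boldi$ as in \eqref{eq:4}.

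Call $\boldi\in\C^{r+t}$ \emph{small} if all of $i_1,\dots,i_{r+t}$ are small generalized eigenvalues in the sense of Definition~\ref{def:small}. By Remark~\ref{rem:12} --- together with the fact that $\beta_1^{a_j}$, being the content of a box in one of the two external Young diagrams which Assumption~\ref{ass:1} keeps far away from the middle ones, is always a \emph{large} eigenvalue --- the idempotent $\idempotente\idem_\bolda$ acts on $\idem_\bolda M$ exactly as the projection with image $\bigoplus_{\boldi\text{ small}}\idem_\bolda M_\boldi$ and kernel $\bigoplus_{\boldi\text{ not small}}\idem_\bolda M_\boldi$. Now Lemma~\ref{lem:1} gives $s_k\,\idem_\bolda M_\boldi\subseteq\idem_\bolda M_\boldi+\idem_\bolda M_{\sfs_k\boldi}$, and since $\sfs_k\boldi$ has the same multiset of entries as $\boldi$, it is small if and only if $\boldi$ is. Hence $s_k$ stabilizes both the image and the kernel of the projection $\idempotente\idem_\bolda$, and an operator that preserves both the image and the kernel of an idempotent commutes with that idempotent. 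Therefore $s_k\idempotente\idem_\bolda=\idempotente s_k\idem_\bolda$ on $M$, and as $M$ is arbitrary the identity holds in $\uVB^\cycl_{r,t}$.

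The only delicate point, and the one I would take most care over, is the identification of the image and kernel of $\idempotente\idem_\bolda$ with the ``all small'' and ``not all small'' parts of the eigenspace decomposition; this is exactly the content of Remark~\ref{rem:12}, which in turn rests on the partition calculus of Proposition~\ref{part:prop:4} and on Assumption~\ref{ass:1} ensuring that $\beta_1^{a_j}$ never coincides with a small eigenvalue. A purely algebraic alternative would use that, by relations~\ref{item:18}, the element $s_k\idem_\bolda$ commutes with $y_k+y_{k+1}$ and with $y_ky_{k+1}$, hence with every symmetric polynomial in $y_k,y_{k+1}$, and then try to exhibit $\eta_k\eta_{k+1}\idem_\bolda$ as such a symmetric expression (note $\eta_k$, $\eta_{k+1}$ are polynomials in $y_k$, $y_{k+1}$ respectively, both built from the same shift $\beta_1^{a_k}=\beta_1^{a_{k+1}}$); but making the requisite symmetry manifest in the non-semisimple case is more awkward than the eigenspace argument above, so I would not take that route.
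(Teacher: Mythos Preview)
Your proof is correct and follows essentially the same route as the paper: both use Lemma~\ref{lem:1} to see that $s_k$ sends small-eigenvalue generalized eigenspaces to small-eigenvalue generalized eigenspaces, and hence commutes with the projection $\idempotente\idem_\bolda$ onto their direct sum. You have simply made explicit the details (the trivial case $a_k\neq a_{k+1}$, the identification of $\idempotente$ with the ``all small'' projection via Remark~\ref{rem:12}, and the image/kernel argument) that the paper's one-line proof leaves to the reader.
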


\begin{proof}
  This is clear, since it follows by Lemma~\ref{lem:1} that \(s_k\) sends generalized eigenspaces with small eigenvalues to generalized eigenspaces with small eigenvalues.
\end{proof}

Before going into details, we like to point out that our notation for the following proofs is a bit risky, although in our opinion it is the best we could figure out. The problem is that we write inverses and square roots of the \(b_k\)'s, but we are allowed to do that  only when \(b_k\) is next to the idempotent \(\idempotente\). Nevertheless, for the sake of readability, we will omit the idempotent as often as possible: for example, thanks to Lemma~\ref{lem:45}, we write the first equation of \eqref{eq:13} as
  \begin{equation}
    \label{eq:12}
        \sigma_k \idem_\bolda  =
      - \displaystyle \sqrt\frac{b_{k+1}}{b_k} s_k \sqrt\frac{b_{k+1}}{b_k}\idempotente\idem_\bolda  + \frac{1}{b_k} \idempotente \idem_\bolda.
  \end{equation}

\begin{lemma}
  \label{lem:19}
  Let $k\in J$ and let \(\bolda\) be an \((r,t)\)--sequence. Then the following formulas hold for all \(\bolda,\bolda' \in \Seq_{r,t}\) with  \(a_k = a_{k+1}\) and \(a'_k \neq a'_{k+1}\):
  \begin{subequations}
    \begin{align}
      s_k b_k\idem_\bolda &= b_{k+1} s_k\idem_\bolda -\idem_\bolda,
      &      s_k b_{k+1}\idem_\bolda &= b_k s_k \idem_\bolda + \idem_\bolda, \label{eq:171}\\
      s_k c_k \idem_\bolda&= c_{k+1} s_k\idem_\bolda +\idem_\bolda,
      & s_k c_{k+1} \idem_\bolda&= c_k s_k \idem_\bolda - \idem_\bolda,\label{eq:172}\\
      \hats_k b_k  \idem_{\bolda'} &= b_{k+1} \hats_k  \idem_{\bolda'} +\hate_k \idem_{\bolda'},
      & \hats_k b_{k+1}\idem_{\bolda'} &= b_k \hats_k \idem_{\bolda'} -\hate_k\idem_{\bolda'}, \label{eq:173}\\
      \hats_k c_k \idem_{\bolda'}&= c_{k+1} \hats_k\idem_{\bolda'} -\hate_k\idem_{\bolda'},
      &s_k c_{k+1}\idem_{\bolda'} &= c_k s_k \idem_{\bolda'} +\hate_k\idem_{\bolda'}, \label{eq:174}
    \end{align}
    \begin{align}
      s_k \frac{1}{b_k} \idempotente \idem_\bolda &= \frac{1}{b_{k+1}}s_k \idempotente \idem_\bolda+ \frac{1}{b_k b_{k+1}} \idempotente \idem_\bolda,\label{eq:29}\\
      s_k \frac{1}{b_{k+1}}  \idempotente \idem_\bolda&= \frac{1}{b_k} s_k \idempotente \idem_\bolda - \frac{1}{b_k b_{k+1}} \idempotente \idem_\bolda,\label{eq:31}\\
      s_k \frac{1}{c_k} \idempotente \idem_\bolda&= \frac{1}{c_{k+1}}s_k\idempotente \idem_\bolda- \frac{1}{c_k c_{k+1}}\idempotente\idem_\bolda,\label{eq:32}\\
      s_k \frac{1}{c_{k+1}}\idempotente\idem_\bolda &= \frac{1}{c_k} s_k \idempotente \idem_\bolda+ \frac{1}{c_k c_{k+1}}\idempotente \idem_\bolda,\label{eq:33}\\
      \idempotente \hats_k \frac{1}{b_k} \idempotente\idem_{\bolda'} &= \idempotente\frac{1}{b_{k+1}}\hats_k \idempotente \idem_{\bolda'}- \idempotente\frac{1}{b_{k+1}}\hate_k \frac{1}{b_{k}}\idempotente \idem_{\bolda'},\label{eq:34}\\
 \hats_k \frac{1}{b_{k+1}} \idem_{\bolda'}&= \frac{1}{b_k} \hats_k \idem_{\bolda'}+ \frac{1}{b_k}\hate_k \frac{1}{b_{k+1}} \idem_{\bolda'},\label{eq:36}\\
        \idempotente \hats_k \frac{1}{c_k}\idempotente \idem_{\bolda'} &= \idempotente\frac{1}{c_{k+1}}\hats_k\idempotente \idem_{\bolda'} +
      \idempotente\frac{1}{c_{k+1}}\hate_k \frac{1}{c_{k}}\idempotente \idem_{\bolda'},\label{eq:37}\\
 \idempotente\hats_k      \frac{1}{c_{k+1}}\idempotente \idem_{\bolda'}&= \idempotente\frac{1}{c_k} \hats_k \idempotente\idem_{\bolda'}-
     \idempotente \frac{1}{c_k}\hate_k \frac{1}{c_{k+1}}\idempotente \idem_{\bolda'}.\label{eq:38}
    \end{align}
  \end{subequations}
\end{lemma}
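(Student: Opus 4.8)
The two families of identities --- the \emph{linear} relations \eqref{eq:171}--\eqref{eq:174}, which already hold in \(\uVB_{r,t}(\bomega)\), and the \emph{inverse} relations \eqref{eq:29}--\eqref{eq:38}, which live in the idempotent truncation --- will be treated separately: the plan is to prove the linear ones by a direct computation from the defining relations, and then to deduce the inverse ones from them by formal manipulation.

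For the linear relations I would first record, for an arbitrary scalar \(u\), the master identities
\begin{align*}
  s_k(u + y_k)\idem_\bolda &= (u + y_{k+1})\,s_k\idem_\bolda - \idem_\bolda, &
  s_k(u - y_k)\idem_\bolda &= (u - y_{k+1})\,s_k\idem_\bolda + \idem_\bolda,\\
  \hat s_k(u + y_k)\idem_{\bolda'} &= (u + y_{k+1})\,\hat s_k\idem_{\bolda'} + \hat e_k\idem_{\bolda'}, &
  \hat s_k(u - y_k)\idem_{\bolda'} &= (u - y_{k+1})\,\hat s_k\idem_{\bolda'} - \hat e_k\idem_{\bolda'},
\end{align*}
valid for all \(\bolda\) with \(a_k=a_{k+1}\) and all \(\bolda'\) with \(a'_k\neq a'_{k+1}\); these are immediate from relations \ref{item:18} and \ref{item:19}, using that \(u\) is central and that \(\sfs_k\bolda=\bolda\) in the first row. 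The point that makes these specialize to the claimed formulas is the compatibility of the orientation-dependent constants: when \(a_k=a_{k+1}\) one has \(\operatorname{rev}(a_{k+1})=\operatorname{rev}(a_k)\), so \(b_{k+1}\idem_\bolda=(\beta_1^{\operatorname{rev}(a_k)}+y_{k+1})\idem_\bolda\) carries the same scalar as \(b_k\idem_\bolda\) (and \(c_k,c_{k+1}\) share the scalar \(\beta_1^{a_k}\)); when \(a'_k\neq a'_{k+1}\) one has \((\sfs_k\bolda')_{k+1}=a'_k\) and \((\sfs_k\bolda')_k=a'_{k+1}\), so \(b_{k+1}\idem_{\sfs_k\bolda'}=(\beta_1^{\operatorname{rev}(a'_k)}+y_{k+1})\idem_{\sfs_k\bolda'}\) again carries the constant appearing in \(b_k\idem_{\bolda'}\), and likewise for \(c\). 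Substituting \(u=\beta_1^{\operatorname{rev}(a_k)}\) or \(u=\beta_1^{a_k}\), and using the support relations \ref{item:28} and \ref{item:29} to read the scalar on the right as \(b_{k+1}\) (resp.\ \(c_{k+1}\)), yields \eqref{eq:171}--\eqref{eq:174} line by line.

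For the inverse relations I would work inside \(\idempotente\,\uVB_{r,t}^\cycl\,\idempotente\), relying on two facts. First, \(\idempotente\) commutes with every \(y_j\), hence with each \(b_k,c_k\) and --- by Section~\ref{sec:invers-square-roots} --- with their inverses, and it commutes with \(s_k\) by Lemma~\ref{lem:45}. Second, \(b_k\) and \(c_k\) are invertible on \(\idempotente\idem_\bolda\): by Remark~\ref{rem:12} and the discussion preceding Lemma~\ref{lem:45} the generalized eigenvalues of \(y_k\) there are small, while \(-\beta_1^{\operatorname{rev}(a_k)}\) and \(\beta_1^{a_k}\) are large by Assumption~\ref{ass:1}, so Proposition~\ref{prop:2} applies; in \eqref{eq:36}, which is stated without \(\idempotente\), the relevant \(b\)'s are invertible already on the untruncated blocks for the same eigenvalue reason. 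Granting this, each inverse relation follows from the corresponding linear one by multiplying on the right by \(\tfrac1{b_k}\) or \(\tfrac1{c_k}\) and on the left by \(\tfrac1{b_{k+1}}\) or \(\tfrac1{c_{k+1}}\), inserting \(\idempotente\) where a cancellation forces it (cf.\ \eqref{eq:54}) and rearranging; the \(\hat e_k\)-terms in \eqref{eq:34}--\eqref{eq:38} are precisely the \(\hat e_k\) produced by the master identity for \(\hat s_k\).

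The calculation in each of the many cases is short, so the real work is bookkeeping, and the two places needing genuine care are: (i) keeping straight which of \(\beta_1^\up,\beta_1^\down\) occurs in each of \(b_k,b_{k+1},c_k,c_{k+1}\) on a given orientation, together with the signs coming from \ref{item:18}--\ref{item:19} and with whether an identity is read off \(\idem_\bolda\) or off \(\idem_{\sfs_k\bolda}\); and (ii) deciding, line by line, whether the \(b\) or \(c\) being inverted is already invertible on \(\idem_\bolda\) or only after truncation by \(\idempotente\), which is what fixes the placement of the idempotents. I expect the main obstacle to be organizing this case distinction cleanly rather than any individual computation.
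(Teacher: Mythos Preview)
Your proposal is correct and follows essentially the same approach as the paper: the linear relations \eqref{eq:171}--\eqref{eq:174} are derived directly from the defining relations \ref{item:18} and \ref{item:19}, and the inverse relations \eqref{eq:29}--\eqref{eq:38} are then obtained by multiplying on the left and right by the appropriate inverses in the idempotent truncation. The paper's proof is a two-sentence sketch of exactly this strategy, whereas you spell out in detail the orientation bookkeeping and the invertibility justifications that the paper leaves implicit.
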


\begin{proof}
  Formulas \eqref{eq:171}, \eqref{eq:172}, \eqref{eq:173} and \eqref{eq:174} follow directly from the defining relations \ref{item:18} and \ref{item:19}.
The other formulas follow by multiplying by the idempotent \(\idempotente\) and by some easy algebraic manipulation. For example, \eqref{eq:29} follows from the first equation of \eqref{eq:171} by multiplying on the left by \(\frac{1}{b_{k+1}}\idempotente\) and on the right by \(\frac{1}{b_k}\idempotente\).
\end{proof}

\begin{lemma}\label{lem:49}
 The following equalities hold
  \begin{equation}
    \label{eq:290}
    c_k \idempotente e_k \idempotente = c_k e_k \idempotente, \qquad     c_k \idempotente \hat e_k \idempotente = c_k \hat e_k \idempotente, \qquad     c_k \idempotente \hat s_k \idempotente = c_k \hat s_k \idempotente.
  \end{equation}
\end{lemma}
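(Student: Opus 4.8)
The plan is to reduce all three identities to the single statement that $c_k$ annihilates $(1-\idempotente)X\idempotente$, where $X$ runs over $e_k$, $\hat e_k$ and $\hat s_k$: indeed $c_k\idempotente X\idempotente = c_k X\idempotente$ is equivalent to $c_k(1-\idempotente)X\idempotente = 0$. Since $\VB_{r+t}^\cycl$ is finite‑dimensional it acts faithfully on its regular module $M$, so it is enough to verify this on $M$, using the generalized eigenspace decomposition \eqref{eq:4}. Write $\bolda_X$ for the orientation into which $X\idem_\bolda$ lands, so $\bolda_X=\bolda$ for $X=e_k$ by~\ref{item:28} and $\bolda_X=\sfs_k\bolda$ for $X=\hat e_k,\hat s_k$ by~\ref{item:29}. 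Since $\image\big((1-\idempotente)X\idempotente\idem_\bolda\big)\subseteq\idem_{\bolda_X}M$ and, by \eqref{eq:105}, $c_k\idem_{\bolda_X}=(\beta^{(\bolda_X)_k}_1-y_k)\idem_{\bolda_X}$, the goal becomes to show that $y_k$ acts as the scalar $\beta^{(\bolda_X)_k}_1$ on that image.

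To that end I would first locate the image. By~\ref{item:5} and~\ref{item:10} each of $e_k,\hat e_k,\hat s_k$ commutes with $\eta_j$ for $j\neq k,k+1$, so using Definition~\ref{def:1} we get $X\idempotente\idem_\bolda=\idempotente_{k-1}\,X\,(\eta_k\cdots\eta_{r+t})\idem_\bolda$, hence $\image(X\idempotente\idem_\bolda)\subseteq\idempotente_{k-1}\idem_{\bolda_X}M$. Now recall, from the discussion opening Section~\ref{sec:proofs} (Remark~\ref{rem:12} together with the induction via Lemma~\ref{lem:9}), that on $\idempotente_{k-1}\idem_{\bolda'}M$ the element $y_k$ has all generalized eigenvalues small apart from a single big one, which is moreover \emph{proper} and which, by the partition calculus and the choice of cyclotomic parameters \eqref{eq:56}, equals $\beta^{(\bolda')_k}_1$ — precisely the scalar subtracted from $y_k$ in $c_k\idem_{\bolda'}$. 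On the other hand, applying Lemma~\ref{lem:3} (for $X=e_k,\hat e_k$) or Lemma~\ref{lem:2} (for $X=\hat s_k$) to the small generalized eigenspaces that make up $\idempotente\idem_\bolda M$ shows that every generalized eigenvalue $\boldi'$ occurring on $X\idempotente\idem_\bolda M$ has $\rmi'_j$ small for $j\neq k,k+1$, and that whenever $\boldi'$ is not entirely small one additionally has $\rmi'_k+\rmi'_{k+1}=0$.

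Putting these together: $1-\idempotente$ projects off the part of $X\idempotente\idem_\bolda M$ with entirely small eigenvalues, so $\image\big((1-\idempotente)X\idempotente\idem_\bolda\big)$ lies in the sum of the spaces $\idempotente_{k-1}\idem_{\bolda_X}M_{\boldi'}$ with $\boldi'$ not entirely small; on such a space some $\rmi'_j=\beta^{(\bolda_X)_j}_1$, which for $j\neq k,k+1$ is impossible by smallness, so $j\in\{k,k+1\}$, and combining $\rmi'_k+\rmi'_{k+1}=0$ with the fact that the only big generalized eigenvalue of $y_k$ on $\idempotente_{k-1}\idem_{\bolda_X}M$ is $\beta^{(\bolda_X)_k}_1$ forces $\rmi'_k=\beta^{(\bolda_X)_k}_1$ in either case. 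As this eigenvalue is proper, $y_k-\beta^{(\bolda_X)_k}_1$ annihilates the image, whence $c_k(1-\idempotente)X\idempotente\idem_\bolda=0$; summing over $\bolda\in\Seq_{r,t}$ yields the three displayed equalities. The hard part is really the input from the second paragraph — identifying the big eigenvalue of $y_k$ on $\idempotente_{k-1}\idem_{\bolda_X}M$ with the scalar that $c_k$ subtracts, and knowing that this eigenvalue is proper — which is exactly where Assumption~\ref{ass:1} and the cyclotomic parameters \eqref{eq:56} are used; everything else is routine bookkeeping with the eigenspace lemmas of Section~\ref{sec:some-lemmas}.
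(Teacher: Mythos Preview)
Your argument is correct and uses the same essential inputs as the paper: the fact (recorded at the start of Section~\ref{sec:proofs}) that on $\idempotente_{k-1}\idem_{\bolda'}M$ the element $y_k$ has only the single big generalized eigenvalue $\beta_1^{(\bolda')_k}$ and that this eigenvalue is proper, together with the eigenvalue constraints of Lemmas~\ref{lem:3} and~\ref{lem:2}. The organization differs slightly: you rewrite the claim as $c_k(1-\idempotente)X\idempotente=0$ and analyse which generalized eigenspaces $\idem_{\bolda_X}M_{\boldi'}$ survive, whereas the paper inserts the partial idempotents one at a time, first passing from $c_k e_k\idempotente$ to $c_k\idempotente_k e_k\idempotente$ via $c_k(1-\eta_k)\idempotente_{k-1}=0$, and then from $\idempotente_k e_k$ to $\idempotente_{k+1}e_k$ using $(y_k+y_{k+1})e_k=0$. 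Both routes amount to the same observation, and the paper's treatment of the $\hat s_k$ case (``the very same argument, using Lemma~\ref{lem:2}'') is exactly what you spell out. One small remark: in your case $j=k+1$ the equality $\rmi'_k=-\beta_1^{(\bolda_X)_{k+1}}$ is in fact incompatible with $\rmi'_k\in\{\text{small}\}\cup\{\beta_1^{(\bolda_X)_k}\}$ under Assumption~\ref{ass:1} (since $\beta_1^\up+\beta_1^\down=m+n-\delta\neq 0$), so that case is vacuous; your conclusion $\rmi'_k=\beta_1^{(\bolda_X)_k}$ still stands, but it is really only the case $j=k$ that occurs.
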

\begin{proof}
  We have 
  \begin{equation}
c_k e_k
  \idempotente = c_k \idempotente_{k-1} e_k
  \idempotente = c_k \idempotente_k
  \idempotente_{k-1} e_k \idempotente + c_k (1
  -\idempotente_k) \idempotente_{k-1} e_k \idempotente =
  c_k \idempotente_k e_k \idempotente,\label{eq:40}
\end{equation}
since
  \(c_k (1 -\idempotente_k) \idempotente_{k-1}=0\)
  by definition and by our partition calculus. Here, we use the fact that \(\beta_1^{a_k}\) is the only big eigenvalue of \(y_k\idempotente \idem_\bolda\), and this has to be a proper eigenvalue, as we noticed in the discussion at the beginning of the section. Moreover, since \((y_k+y_{k+1})e_k=0\), the generalized eigenvalues of \(y_{k+1}\) on the image of \(e_k\) are determined by the generalized eigenvalues of \(y_k\). Since \(y_k \idempotente_k\) cannot have generalized eigenvalue \(-\beta_1^{a_{k+1}}\idem_\bolda\),
it follows that on the image of \(e_k \idem_\bolda\) the element \(y_{k+1}\idempotente_{k}\idem_\bolda \) cannot have generalized eigenvalue \(\beta_1^{a_{k+1}}\). In formulas, 
 we have \(\idempotente_k e_k \idem_\bolda= \idempotente_{k+1} e_k\idem_\bolda\).

This proves the first equality. The second one follows at once. The third one can be shown by the very same argument, using  Lemma~\ref{lem:2} for eigenvalue considerations.
\end{proof}

The following is the most crucial point of the proof:
\begin{lemma}
  \label{lem:20}
  We have \(\tilde e_k^2  = - \delta \tilde e_k \).
\end{lemma}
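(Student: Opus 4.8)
The plan is to strip off the conjugating factors \(Q_k\) and reduce the statement to the single scalar identity \eqref{eq:30}, essentially as indicated in Remark~\ref{theremark}. Write \(\tilde e_k\idem_\bolda=\tau_k\idem_\bolda=Q_k e_k Q_k\idem_\bolda\). Since \(Q_k^2=\tfrac{b_{k+1}}{b_k}\idempotente\) is invertible inside \(\idempotente\,\uVB_{r,t}^\cycl\,\idempotente\) (by Proposition~\ref{prop:2} and Assumption~\ref{ass:1} the relevant \(b_k\) have no zero generalized eigenvalue there), the element \(Q_k\) itself is invertible, so
\[
  \tilde e_k^2\idem_\bolda=Q_k\,e_k\,Q_k^2\,e_k\,Q_k\idem_\bolda=Q_k\bigl(e_k\,\tfrac{b_{k+1}}{b_k}\idempotente\,e_k\bigr)Q_k\idem_\bolda ,
\]
and it suffices to prove \(e_k\,\tfrac{b_{k+1}}{b_k}\idempotente\,e_k\idem_\bolda=-\delta\,e_k\idem_\bolda\), after which conjugating back by \(Q_k\) finishes the lemma. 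Throughout one has to move \(\idempotente\) across the \(e_k\)'s, which is exactly what Lemmas~\ref{lem:45} and \ref{lem:49} and the partition-calculus identity \eqref{eq:328} are for; as the text warns, the inverses and square roots only make sense next to \(\idempotente\), so this bookkeeping cannot be skipped.

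Next I would simplify the middle factor. Because \(e_k\idem_\bolda\neq 0\) forces \(a_k\neq a_{k+1}\), on \(\idem_\bolda\) we have \(b_{k+1}=\beta_1^{a_k}+y_{k+1}\) and \(b_k=\beta_1^{\operatorname{rev}(a_k)}+y_k\). Using the relations \(e_k(y_k+y_{k+1})=0\) and \((y_k+y_{k+1})e_k=0\), feeding the left-hand \(e_k\) through \(b_{k+1}\) replaces \(y_{k+1}\) by \(-y_k\), so the middle becomes \(e_k\,g(y_k)\,\idempotente\,e_k\idem_\bolda\) for a rational function \(g(y_k)\) of \(y_k\) alone, whose partial-fraction expansion is
\[
  g(y_k)=-1+\frac{\beta_1^\up+\beta_1^\down}{\beta_1^{\operatorname{rev}(a_k)}+y_k}.
\]
Plugging in the explicit parameters \eqref{eq:56} gives \(\beta_1^\up+\beta_1^\down=(m+n)-\delta=\omega_0-\delta\). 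The \(-1\) term contributes \(-e_k\idempotente e_k\idem_\bolda=-\omega_0 e_k\idem_\bolda\) (relation (Br5), which in \(\uVB_{r,t}^\cycl\) reads \(e_k^2=\omega_0 e_k\), together with \eqref{eq:328} to absorb \(\idempotente\)), while, granting the ``bubble'' identity \eqref{eq:30} in the form \(e_k\tfrac{1}{\beta_1^{\operatorname{rev}(a_k)}+y_k}\idempotente e_k\idem_\bolda=e_k\idem_\bolda\), the second term contributes \((\omega_0-\delta)e_k\idem_\bolda\); adding, the \(\omega_0\)'s cancel and we are left with \(-\delta e_k\idem_\bolda\), as required. (If the convention in \eqref{eq:30} pins the constant to \(\beta_1^{a_k}\) rather than \(\beta_1^{\operatorname{rev}(a_k)}\), one uses the corresponding variant, proved by the identical argument.)

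It remains to establish the bubble identity, which is the heart of the matter and the source of the change of parameter \(\omega_0\rightsquigarrow\delta\). I would invoke \cite[Proposition~2.13]{MR3244645}: for a formal variable \(u\) one has \(e_k\tfrac{1}{u-y_k}e_k\idem_\bolda=\tfrac{W_k^\bolda(u)}{u}e_k\idem_\bolda\), an identity of power series in \(u^{-1}\) whose coefficients are the \(\omega_j\) (this is just the relation \(e_k y_k^j e_k\idem_\bolda=\omega_j e_k\idem_\bolda\) repackaged). By Lemma~\ref{lem:5} the \(\omega_j\) obey a linear recurrence, so \(W_k^\bolda(u)\) is in fact a \emph{rational} function of \(u\). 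I would then specialise \(u=-\beta_1^{\operatorname{rev}(a_k)}\): by the partition calculus together with Assumption~\ref{ass:1}, this value is not a generalized eigenvalue of \(y_k\) on the \(\idempotente\)-truncated image of \(e_k\idem_\bolda\), so by Proposition~\ref{prop:2} the expression \(\tfrac{1}{\beta_1^{\operatorname{rev}(a_k)}+y_k}\) is a bona fide element there and the rational identity survives the specialisation (this passage from the formal-\(u^{-1}\) identity to a genuine one in the non-semisimple algebra is precisely what the formalism of Section~\ref{sec:invers-square-roots} was built to support, and is the substitute for the eigenvalue bookkeeping of \cite{MR2235339} used in the semisimple case). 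A direct substitution into \eqref{eq:56} then yields \(W_k^\bolda(-\beta_1^{\operatorname{rev}(a_k)})=\beta_1^{\operatorname{rev}(a_k)}\), hence \(e_k\tfrac{1}{\beta_1^{\operatorname{rev}(a_k)}+y_k}\idempotente e_k\idem_\bolda=\tfrac{W_k^\bolda(-\beta_1^{\operatorname{rev}(a_k)})}{\beta_1^{\operatorname{rev}(a_k)}}e_k\idem_\bolda=e_k\idem_\bolda\).

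The step I expect to be the main obstacle is this last one: making the specialisation \(u\mapsto-\beta_1^{\operatorname{rev}(a_k)}\) legitimate in the non-semisimple setting — i.e. checking that \(W_k^\bolda\), as a rational function, is regular there (after the truncation by \(\idempotente\) removes the offending large eigenvalue) and that the formal identity genuinely transports — and carrying out the scalar computation \(W_k^\bolda(-\beta_1^{\operatorname{rev}(a_k)})=\beta_1^{\operatorname{rev}(a_k)}\) from \eqref{eq:56}, which is exactly where the walled Brauer loop parameter flips from \(\omega_0=m+n\) to \(\delta\). A secondary, purely organisational difficulty is the constant need to track where \(\idempotente\) sits relative to the \(e_k\)'s, since every inverse and square root of a \(b_k\) is only defined adjacent to \(\idempotente\); this is dispatched by Lemmas~\ref{lem:45} and \ref{lem:49} and by \eqref{eq:328}.
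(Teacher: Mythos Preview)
Your reduction of \(\tilde e_k^2=-\delta\tilde e_k\) to the bubble identity \(e_k\frac{1}{b_k}e_k\idempotente\idem_\bolda=e_k\idempotente\idem_\bolda\) is exactly what the paper does: the partial-fraction step, the cancellation \(\omega_0-(\omega_0-\delta)\), and the use of Lemma~\ref{lem:49} to move \(\idempotente\) past \(e_k\) all match the paper's proof line for line.

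The divergence, and the genuine gap, is in how you propose to prove the bubble identity itself. You want to specialise the formal identity \(e_k\frac{1}{u-y_k}e_k\idem_\bolda=\frac{W_k^\bolda(u)}{u}e_k\idem_\bolda\) at \(u=-\beta_1^{\operatorname{rev}(a_k)}\). Two problems. First, for \(k>1\) the series \(W_k^\bolda(u)\) is defined recursively in \cite[Proposition~2.13]{MR3244645} and has coefficients in the algebra (in the commutant of \(e_k\)), not scalars; the recurrence of Lemma~\ref{lem:5} only controls the \(\omega_j\), i.e.\ the case \(k=1\), so your claim that \(W_k^\bolda(u)\) is a rational function and that ``a direct substitution into \eqref{eq:56}'' computes its value is unjustified for general \(k\). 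Second, even granting rationality, the passage from the formal \(u^{-1}\)-series identity to an honest algebra identity at a specific \(u_0\) is exactly what Remark~\ref{theremark} flags as the obstruction in the non-semisimple case; the formalism of Section~\ref{sec:invers-square-roots} tells you when \(\frac{1}{u_0-y_k}\) exists, but it does not by itself transport the power-series identity.

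The paper sidesteps both issues by proving the bubble identity as a separate lemma (Lemma~\ref{lem:30}) by induction on \(k\). The base case \(k=1\) is done by hand: since \(y_1\idempotente\idem_\bolda\) satisfies a linear polynomial, \(\frac{1}{b_1}\idempotente\idem_\bolda\) is written explicitly as \(\alpha y_1+\gamma\) and then \(e_1 y_1 e_1=\omega_1 e_1\), \(e_1^2=\omega_0 e_1\) finish it (this is essentially your specialisation argument, but only where it is legitimate). The inductive step conjugates \(e_{k+1}\) back to \(e_k\) via \(s_k\hat s_{k+1}\) (two cases according to whether \(a_k=a_{k+1}\)) and uses the commutation relations to reduce to the inductive hypothesis. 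That is the argument you need to supply in place of the specialisation.
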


\begin{proof}
Let \(\bolda\) be such that \(a_k \neq a_{k+1}\). We compute using Lemma~\ref{lem:30} below:
  \begin{equation}
    \label{eq:110}
    \begin{aligned}
      \tilde e_k ^2 \idem_\bolda & =  Q_k e_k \frac{\beta_1^{a_{k}} + y_{k+1}}{b_k}\idempotente e_k Q_k \idem_\bolda =  Q_k e_k \frac{\beta_1^{a_{k}} - y_{k}}{b_k} e_k Q_k \idem_\bolda \\
&=  Q_k e_k \left(\frac{-\beta_1^{\operatorname{rev}(a_{k})} -y_k}{b_k} + \frac{\beta_1^{a_k}+\beta_1^{\operatorname{rev}(a_{k})}}{b_k}\right) e_k Q_k \idem_\bolda \\
& = -Q_k e_k^2 Q_k \idem_\bolda + (m+n-\delta)  Q_k e_k Q_k\idem_\bolda  = -\delta \tilde e_k \idem_\bolda.
    \end{aligned}
  \end{equation}
  Note that in the second equality we used Lemma~\ref{lem:49}.
\end{proof}

\begin{lemma}
  \label{lem:23}
  We have \(\sigma_k \sigma_k  \idem_\bolda= \idempotente \idem_\bolda \) for all \(\bolda\in\Seq_{r,t}\) with \(a_k=a_{k+1}\).
\end{lemma}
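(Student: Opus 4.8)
The plan is to reduce the statement to a short computation inside the corner algebra $\idem_\bolda\idempotente\,\uVB_{r,t}^\cycl\,\idempotente\idem_\bolda$, using the commutation relations collected in Lemma~\ref{lem:19}. First I would record the features that make this corner well behaved: since $a_k=a_{k+1}$ we have $\sfs_k\bolda=\bolda$, so $s_k$, $b_k$, $b_{k+1}$, $Q_k$ and $\idempotente$ all preserve $\idem_\bolda$; moreover $\hat s_k\idem_\bolda=0$, so relation~\ref{item:br:1} forces $s_k^2\idem_\bolda=\idem_\bolda$; by Lemma~\ref{lem:45} the idempotent $\idempotente$ commutes with $s_k\idem_\bolda$, and it obviously commutes with the polynomials $b_k,b_{k+1}$ in the $y$'s; and $Q_k^2\idem_\bolda=\tfrac{b_{k+1}}{b_k}\idempotente\idem_\bolda$ by the very definition of the square root in Section~\ref{sec:invers-square-roots}. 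Abbreviating $T=Q_k s_k Q_k\idem_\bolda$, formula~\eqref{eq:12} reads $\sigma_k\idem_\bolda=-T+\tfrac{1}{b_k}\idempotente\idem_\bolda$.

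The heart of the argument is the pair of identities
\begin{equation*}
  T^2=\idempotente\idem_\bolda+\big(\tfrac{1}{b_k}+\tfrac{1}{b_{k+1}}\big)T
  \qquad\text{and}\qquad
  T\,\tfrac{1}{b_k}\idempotente\idem_\bolda=\tfrac{1}{b_{k+1}}T+\tfrac{1}{b_k^2}\idempotente\idem_\bolda .
\end{equation*}
For the first one I would write $T^2=Q_k s_k Q_k^2 s_k Q_k\idem_\bolda=Q_k s_k\tfrac{b_{k+1}}{b_k}\idempotente s_k Q_k\idem_\bolda$ and push the left $s_k$ through $\tfrac{b_{k+1}}{b_k}$: combining the second equation of~\eqref{eq:171} with~\eqref{eq:29} gives $s_k\tfrac{b_{k+1}}{b_k}\idempotente\idem_\bolda=\tfrac{b_k}{b_{k+1}}s_k\idempotente\idem_\bolda+\big(\tfrac{1}{b_k}+\tfrac{1}{b_{k+1}}\big)\idempotente\idem_\bolda$; feeding this back in, collapsing the resulting $s_k\cdot s_k$ by $s_k^2\idem_\bolda=\idem_\bolda$, and using $Q_k^2\idem_\bolda=\tfrac{b_{k+1}}{b_k}\idempotente\idem_\bolda$ once more, one is left with exactly the displayed right-hand side. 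The second identity is quicker: since $Q_k$ commutes with $b_k$ we have $T\,\tfrac{1}{b_k}\idempotente\idem_\bolda=Q_k s_k\tfrac{1}{b_k}\idempotente Q_k\idem_\bolda$, and one application of~\eqref{eq:29} followed by $Q_k^2\idem_\bolda=\tfrac{b_{k+1}}{b_k}\idempotente\idem_\bolda$ produces the right-hand side.

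It then remains to expand
\begin{equation*}
  \sigma_k\sigma_k\idem_\bolda=\big(-T+\tfrac{1}{b_k}\idempotente\big)^2\idem_\bolda=T^2-T\,\tfrac{1}{b_k}\idempotente\idem_\bolda-\tfrac{1}{b_k}\idempotente T+\tfrac{1}{b_k^2}\idempotente\idem_\bolda
\end{equation*}
and substitute the two identities above (noting $\idempotente T=T$, so $\tfrac{1}{b_k}\idempotente T=\tfrac{1}{b_k}T$): everything cancels pairwise — the two copies of $\tfrac{1}{b_k}T$, the two copies of $\tfrac{1}{b_{k+1}}T$, and the two copies of $\tfrac{1}{b_k^2}\idempotente\idem_\bolda$ — leaving $\sigma_k\sigma_k\idem_\bolda=\idempotente\idem_\bolda$, as claimed. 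I do not expect a real obstacle here: the computation is essentially bookkeeping, the only care needed being that $s_k$ commutes neither with $Q_k$ nor with $b_k,b_{k+1}$, so the relations of Lemma~\ref{lem:19} (together with $s_k^2\idem_\bolda=\idem_\bolda$ and $Q_k^2\idem_\bolda=\tfrac{b_{k+1}}{b_k}\idempotente\idem_\bolda$) must be invoked in the correct order. The one pleasant point is that the summand $-\tfrac{1}{b_k}\idempotente T$, in which the scalar sits on the left of $T$, never needs to be rewritten: it is cancelled outright by a term produced by $T^2$.
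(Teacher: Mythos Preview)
Your proposal is correct and follows essentially the same route as the paper: expand $\sigma_k^2=(-Q_ks_kQ_k+\tfrac{1}{b_k}\idempotente)^2$, use the commutation relations of Lemma~\ref{lem:19} together with $s_k^2\idem_\bolda=\idem_\bolda$ and $Q_k^2=\tfrac{b_{k+1}}{b_k}\idempotente$ to rewrite $Q_ks_k\tfrac{b_{k+1}}{b_k}s_kQ_k$, and observe that all cross terms cancel. Your packaging via the two auxiliary identities for $T^2$ and $T\tfrac{1}{b_k}$ is just a slightly cleaner bookkeeping of the same computation carried out in \eqref{eq:121}--\eqref{eq:122}.
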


\begin{proof}
We compute
  \begin{equation}
    \label{eq:121}
    \begin{aligned}
      \sigma_k^2 \idem_\bolda & = \bigg(-Q_k s_k Q_k +
       \frac{1}{b_k} \idempotente \bigg)\bigg(-Q_k s_k Q_k + \frac{1}{b_k}\idempotente\bigg) \idem_\bolda\\
      &= Q_k s_k \frac{b_{k+1}}{b_k}\idempotente s_k Q_k\idem_\bolda -  \frac{1}{b_k}Q_ks_kQ_k \idem_\bolda- Q_ks_kQ_k \frac{1}{b_k}\idem_\bolda+\frac{1}{(b_k)^2}\idempotente\idem_\bolda.
    \end{aligned}
  \end{equation}
  Recall that \(\idempotente\) commutes with \(s_k\). We expand the first summand:
\begin{equation}
\label{eq:122}
\begin{aligned}
&Q_k \frac{1}{b_{k+1}} s_k b_{k+1}s_k Q_k \idem_\bolda + Q_k \frac{1}{b_k} s_k Q_k \idem_\bolda\\
&\qquad= Q_k \frac{b_k}{b_{k+1}} Q_k\idem_\bolda + Q_k \frac{1}{b_{k+1}} s_k Q_k\idem_\bolda + Q_k \frac{1}{b_k} s_k Q_k\idem_\bolda \\
&\qquad= \idempotente\idem_\bolda + Q_k s_k \frac{1}{b_k} Q_k\idem_\bolda - Q_k \frac{1}{b_kb_{k+1}} Q_k\idem_\bolda + Q_k \frac{1}{b_k} s_k Q_k\idem_\bolda.
    \end{aligned}
  \end{equation}
  Putting \eqref{eq:122} into \eqref{eq:121} we are done.
\end{proof}

\begin{lemma}
  \label{lem:48}
  We have \(\thats_k \thats_k \idem_\bolda = \idempotente \idem_\bolda\) for all \(\bolda\) with \(a_k \neq a_{k+1}\). 
\end{lemma}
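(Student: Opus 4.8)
The plan is to follow the same template as the proof of Lemma~\ref{lem:23}, exploiting the fact that $\thats_k=\hat\sigma_k$ has no summand analogous to the $\tfrac{1}{b_k}\idempotente$ term of $\sigma_k$, so that the whole statement collapses to a single identity that must hold exactly.

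First I would reduce matters. Since $s_k\idem_\bolda=0$ in $\uBr_{r,t}(-\delta)$ whenever $a_k\neq a_{k+1}$, relation~\ref{item:br:1} holds in $\uVB_{r,t}^\cycl$ in the form $\hat s_k^2\idem_\bolda=\idem_\bolda$. Using $Q_k^2=\tfrac{b_{k+1}}{b_k}\idempotente$ (which follows from~\eqref{eq:39} since the square root lives in $\idempotente\uVB_\bolda^\cycl\idempotente$) together with $\hat s_k\idem_\bolda=\idem_{\sfs_k\bolda}\hat s_k\idem_\bolda$, one obtains
\[
\hat\sigma_k^2\idem_\bolda=Q_k\hat s_k\,Q_k^2\,\hat s_k\,Q_k\idem_\bolda=Q_k\,\hat s_k\,\tfrac{b_{k+1}}{b_k}\idempotente\,\hat s_k\,Q_k\idem_\bolda .
\]
Because $Q_k$ is invertible in $\idempotente\uVB_\bolda^\cycl\idempotente$, with inverse again a power series in $y_k,y_{k+1}$ times $\idempotente$, it is enough to prove the identity
\[
\idempotente\,\hat s_k\,\tfrac{b_{k+1}}{b_k}\idempotente\,\hat s_k\,\idempotente\idem_\bolda=\tfrac{b_k}{b_{k+1}}\idempotente\idem_\bolda\qquad(a_k\neq a_{k+1}).
\]

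To prove this I would push the left-hand $\hat s_k$ through $\tfrac{b_{k+1}}{b_k}\idempotente$, using the commutation formulas~\eqref{eq:173}, \eqref{eq:34} and~\eqref{eq:36} of Lemma~\ref{lem:19} (equivalently, the defining relations~\ref{item:19}). Keeping careful track of orientations — so that $b_k\idem_\bolda=(\beta_1^{\operatorname{rev}(a_k)}+y_k)\idem_\bolda=(\beta_1^{a_{k+1}}+y_k)\idem_\bolda$ while $b_k\idem_{\sfs_k\bolda}=(\beta_1^{a_k}+y_k)\idem_{\sfs_k\bolda}$, and symmetrically for $b_{k+1}$ — the $\hat s_k$-conjugate of $\tfrac{b_{k+1}}{b_k}$ read in the $\idem_{\sfs_k\bolda}$-block is precisely $\tfrac{b_k}{b_{k+1}}$ read in the $\idem_\bolda$-block. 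Hence, using $\hat s_k^2\idem_\bolda=\idem_\bolda$ (and Lemmas~\ref{lem:1} and~\ref{lem:2} to see that the two copies of $\hat s_k$ cancel up to passing through $\idempotente$), the ``straight'' term equals $\tfrac{b_k}{b_{k+1}}\idempotente\idem_\bolda$, and what remains is a sum of correction terms, each of the shape $(\text{rational in }y_k,y_{k+1})\cdot\hat e_k\cdot\tfrac{1}{b_k}\idempotente\cdot\hat s_k\idem_\bolda$. The identity above is then equivalent to the vanishing of this sum.

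The cancellation of the $\hat e_k$-correction terms is the main obstacle, and I expect to handle it exactly as in the proof of Lemma~\ref{lem:20}. The relevant steps: (i) use Lemma~\ref{lem:49} to move $\idempotente$ next to $\hat e_k$ so that all inverses and square roots remain legitimate; (ii) use the polynomial prefactor together with $(y_k+y_{k+1})\hat e_k=0$ (relation~\ref{item:22}) to reduce the coefficient of $\hat e_k$ on the left to a scalar, and $\hat e_k(y_k+y_{k+1})=0$, $e_k(y_k+y_{k+1})=0$ (relation~\ref{item:21}) to rewrite the coefficient on the right in terms of $c_k$; (iii) use the de-hatting relation~\ref{item:br:13} (in the form $\hat e_k=\hat s_k e_k=e_k\hat s_k$) to fold the surviving copies of $\hat s_k$ against $\hat e_k$, producing an expression $e_k\tfrac{1}{b_k}e_k\idem_\bolda$; and (iv) invoke the crucial identity of Lemma~\ref{lem:30} — the same one behind Lemma~\ref{lem:20} and~\eqref{eq:30} — to collapse $e_k\tfrac{1}{b_k}e_k\idem_\bolda$ and conclude that the sum of correction terms is zero. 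As in Lemma~\ref{lem:2}, one separates the generalized eigenspaces with $\rmi_k+\rmi_{k+1}\neq0$ (where $e_k$ and $\hat e_k$ act by zero, so there is nothing to correct) from those with $\rmi_k+\rmi_{k+1}=0$ (where the $\hat e_k$-terms and the failure of $\hat s_k^2$ to be the identity modulo the $e_k$-part conspire to cancel); the bookkeeping recorded at the start of Section~\ref{sec:proofs} — that $y_{k+1}\idempotente_k\idem_\bolda$ has only the single large, proper eigenvalue $\beta_1^{a_k}$ — is what keeps the appearing inverses well-defined. This yields the displayed identity and hence $\hat\sigma_k^2\idem_\bolda=\idempotente\idem_\bolda$.
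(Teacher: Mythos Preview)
Your overall strategy is the paper's: expand $\hat\sigma_k^2=Q_k\hat s_k\,\tfrac{b_{k+1}}{b_k}\idempotente\,\hat s_k\,Q_k$, commute $\hat s_k$ past the $b$'s using Lemma~\ref{lem:19}, and kill the $\hat e_k/e_k$ remainders with Lemma~\ref{lem:30}. Your reduction to the identity $\idempotente\hat s_k\tfrac{b_{k+1}}{b_k}\idempotente\hat s_k\idempotente\idem_\bolda=\tfrac{b_k}{b_{k+1}}\idempotente\idem_\bolda$ is fine, and you correctly name all the ingredients the paper uses (Lemma~\ref{lem:49}, relations~\ref{item:21}/\ref{item:22}, \ref{item:br:13}, Lemma~\ref{lem:30}).

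The gap is your treatment of the middle idempotent. You write that the two copies of $\hat s_k$ ``cancel up to passing through $\idempotente$'' by Lemmas~\ref{lem:1} and~\ref{lem:2}. Those lemmas are eigenspace statements; they do not give an algebraic identity for $\idempotente\hat s_k\idempotente\hat s_k\idempotente$. In fact this is \emph{not} simply $\idempotente$: the paper establishes (see \eqref{eq:333})
\[
\idempotente\hat s_k\idempotente\hat s_k\idempotente\idem_\bolda=\idempotente\idem_\bolda-\idempotente e_k\tfrac{1}{c_{k+1}}\idempotente\idem_\bolda+\idempotente\hat s_k\idempotente\hat e_k\tfrac{1}{c_{k+1}}\idempotente\idem_\bolda,
\]
derived precisely from the $\hat s_k$-clause of Lemma~\ref{lem:49} (namely $c_k\idempotente\hat s_k\idempotente=c_k\hat s_k\idempotente$), not from Lemmas~\ref{lem:1}--\ref{lem:2}. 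So after your commutation step you get not only the $\hat e_k\cdot\tfrac{1}{b_k}$ corrections you anticipate, but also an $e_k\tfrac{1}{c_{k+1}}$ term and a term still carrying $\hat s_k\idempotente\hat e_k$; your list of correction shapes is incomplete. The paper then needs a second nontrivial computation (its \eqref{eq:335}) to show $\idempotente\hat s_k\idempotente\tfrac{b_{k+1}}{b_k}\hat e_k\idempotente=-\idempotente e_k\idempotente$, which is where Lemma~\ref{lem:30} actually enters; only after that do the five pieces assemble into $\idempotente\idem_\bolda$. The eigenspace dichotomy $\rmi_k+\rmi_{k+1}=0$ versus $\neq 0$ you sketch at the end is heuristic and does not substitute for these two identities. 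Once you replace the appeal to Lemmas~\ref{lem:1}/\ref{lem:2} by the $\hat s_k$-version of Lemma~\ref{lem:49} and carry out the analogue of \eqref{eq:335}, your plan becomes the paper's proof.
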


\begin{proof}
 We compute
  \begin{equation}
    \label{eq:329}
    \begin{split}
      & \thats_k \thats_k \idem_\bolda  =  Q_k \hat s_k \idempotente \frac{b_{k+1}}{b_k}  \hat s_k Q_k  \idem_\bolda\\
    & \qquad  = Q_k \hat s_k \idempotente b_{k+1} \hat s_k \frac{1}{b_{k+1}} Q_k \idem_\bolda - Q_k \hat s_k \idempotente \frac{b_{k+1}}{b_k} \hat e_k \frac{1}{b_{k+1}} Q_k \idem_\bolda\\
    & \qquad  = Q_k \hat s_k \idempotente \hat s_k  \frac{b_k}{b_{k+1}}Q_k  \idem_\bolda - Q_k  \hat s_k \idempotente \hat e_k \frac{1}{b_{k+1}} Q_k \idem_\bolda
       - Q_k \hat s_k \idempotente \frac{b_{k+1}}{b_k} \hat e_k \frac{1}{b_{k+1}} Q_k \idem_\bolda.
    \end{split}
  \end{equation}
  Now note that
  \begin{equation}
    \label{eq:331}
    \hat s_k c_k \hat s_k \idem_\bolda = \hat s_k \hat s_k c_{k+1}\idem_\bolda - \hat s_k \hat e_k \idem_\bolda = c_{k+1} \idem_\bolda - e_k \idem_\bolda,
  \end{equation}
  and
  \begin{equation}
    \label{eq:332}
    \hat s_k \idempotente c_k \hat s_k \idem_\bolda= \hat s_k \idempotente \hat s_k c_{k+1}\idem_\bolda - \hat s_k \idempotente \hat e_k \idem_\bolda
  \end{equation}
  but since \(c_k \idempotente \hat s_k \idempotente\idem_\bolda = c_k \hat s_k \idempotente \idem_\bolda\), we have comparing \eqref{eq:331} and \eqref{eq:332}:
  \begin{equation}
    \label{eq:333}
    \idempotente \hat s_k \idempotente \hat s_k \idempotente \idem_\bolda = \idempotente \idem_\bolda - \idempotente e_k \frac{1}{c_{k+1}} \idempotente \idem_\bolda + \idempotente \hat s_k \idempotente \hat e_k \frac{1}{c_{k+1}} \idempotente \idem_\bolda.
  \end{equation}
  Now substituting \eqref{eq:333} in \eqref{eq:329} we get
  \begin{multline}
    \label{eq:334}
      \thats_k \thats_k \idem_\bolda  = 
 Q_k     \frac{b_k}{b_{k+1}} Q_k \idem_\bolda - Q_k
 e_k \frac{1}{c_{k+1}}   \frac{b_k}{b_{k+1}} Q_k \idem_\bolda +
Q_k  \hat s_k \idempotente \hat e_k  \frac{1}{c_{k+1}} \frac{b_k}{b_{k+1}} Q_k \idem_\bolda \\  - Q_k  \hat s_k \idempotente \hat e_k \frac{1}{b_{k+1}} Q_k \idem_\bolda - Q_k \hat s_k \idempotente \frac{b_{k+1}^*}{b_k} \hat e_k \frac{1}{b_{k+1}} Q_k \idem_\bolda.
  \end{multline}
  Note that the third and the fourth summand cancel together. Let us now compute
  \begin{multline}
    \label{eq:335}
     \idempotente \hat s_k \idempotente \frac{b_{k+1}}{b_k} \hat e_k
       \idem_\bolda\idempotente =  \idempotente \hat s_k  \frac{b_{k+1}}{b_k} \hat e_k  \idempotente     \idem_\bolda 
      =\idempotente b_k \hat s_k \frac{1}{b_k} \hat e_k  \idempotente\idem_\bolda - \idempotente \hat e_k \frac{1}{b_k} \hat e_k  \idempotente\idem_\bolda \\
      =\idempotente \frac{b_k}{b_{k+1}} \hat s_k \hat e_k  \idempotente\idem_\bolda - \idempotente \frac{b_k}{b_{k+1}} \hat e_k \frac{1}{b_k} \hat e_k \idempotente\idem_\bolda - \idempotente \hat e_k \frac{1}{b_k} \hat e_k  \idempotente\idem_\bolda        = - \idempotente e_k \idempotente \idem_\bolda.
  \end{multline}
  Substituting \eqref{eq:335} in \eqref{eq:334} we get then
  \begin{equation}
    \label{eq:336}
           \thats_k \thats_k \idem_\bolda  = 
Q_k      \frac{b_k}{b_{k+1}}Q_k \idem_\bolda -
Q_k  e_k \frac{1}{c_{k+1}}   \frac{b_k}{b_{k+1}} Q_k \idem_\bolda
 + Q_k  e_k \frac{1}{b_{k+1}} Q_k \idem_\bolda
   = \idempotente \idem_\bolda
  \end{equation}
  as we wanted.
\end{proof}

\begin{lemma}
  \label{lem:30}
  The following formula holds for all \(\bolda\in\Seq_{r,t}\) and $k\in J$:
  \begin{equation}
       e_k \frac{1}{b_k} e_k  \idempotente_{k-1} \idem_{\bolda} =   e_k \idempotente_{k-1} \idem_{\bolda}\label{eq:338}
     \end{equation}
\end{lemma}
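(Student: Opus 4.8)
The plan is to deduce \eqref{eq:338} from the ``bubble evaluation'' \eqref{eq:55} of \cite[Proposition~2.13]{MR3244645} (the walled Brauer analogue of the identity of \cite{MR1398116} exploited in \cite{MR2235339}) by specialising the formal variable $u$ to $u=-\beta^{\operatorname{rev}(a_k)}_1$; as Remark~\ref{rem:5} warns, such a specialisation is not allowed naively inside $\C[[u^{-1}]]$, and the heart of the argument is to make it legitimate by combining the truncation by $\idempotente_{k-1}$ with the formalism of Section~\ref{sec:invers-square-roots}. Since both sides of \eqref{eq:338} vanish unless $a_k\neq a_{k+1}$, assume $a_k\neq a_{k+1}$, so that $\operatorname{rev}(a_k)=a_{k+1}$. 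I would first note that $e_k$ commutes with $\idempotente_{k-1}$: it commutes with each $y_i$ for $i\notin\{k,k+1\}$ by relation~\ref{item:10}, hence with $\eta_1,\dots,\eta_{k-1}$; moreover $\idempotente_{k-1}$ commutes with $y_k$ by relation~\ref{item:11}. Next, by the discussion at the beginning of this section (which rests on Lemma~\ref{lem:9}) together with Assumption~\ref{ass:1} --- under which every small generalized eigenvalue is strictly smaller in absolute value than every large one, and $\beta^{\operatorname{rev}(a_k)}_1$ is large, being the content of the corner box of an external Young diagram in the sense of Definition~\ref{def:small} --- the operator $y_k$ acting on $\idempotente_{k-1}\idem_\bolda M$, for any finite-dimensional $\uVB_{r,t}(\bomega)$-module $M$, has only small generalized eigenvalues together with at most one large one, which is $\beta_1^\up$ or $\beta_1^\down$ and hence positive. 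Therefore $-\beta^{\operatorname{rev}(a_k)}_1<0$ is not a generalized eigenvalue of $y_k$ there, so $b_k\idempotente_{k-1}\idem_\bolda$ is invertible and $\tfrac1{b_k}$ is unambiguously defined next to $\idempotente_{k-1}\idem_\bolda$ in the sense of \eqref{eq:53}.

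The second step reduces the claim to a scalar identity. By \cite[Proposition~2.13]{MR3244645} there is a series $W^\bolda_k(u)\in\C[[u^{-1}]]$ with
\[
  e_k\,\frac{1}{u-y_k}\,e_k\,\idem_\bolda \;=\; \frac{W^\bolda_k(u)}{u}\,e_k\,\idem_\bolda ,
\]
cf.\ \eqref{eq:55}; after imposing the level-$2$ cyclotomic relations \eqref{eq:48}--\eqref{eq:49}, $W^\bolda_k(u)$ becomes a \emph{rational} function of $u$ (its coefficients satisfy a degree-$2$ linear recursion --- this is the computation behind Lemma~\ref{lem:5} for $k=1$, and for general $k$ the recursion in \cite[Proposition~2.13]{MR3244645} produces $W^\bolda_k$ from $W^{\bolda'}_1$, for a suitable $\bolda'$, by finitely many multiplications with rational factors). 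Hence the displayed identity, valid as an identity of power series in $u^{-1}$, is an identity of rational functions of $u$ with coefficients in $\idem_\bolda\uVB_{r,t}^\cycl\idem_\bolda$. Multiplying by $\idempotente_{k-1}$ and moving it through $e_k$ and through $\tfrac1{u-y_k}$ (Step~1), the left-hand side becomes $e_k\tfrac1{u-y_k}e_k\,\idempotente_{k-1}\idem_\bolda$, whose only possible poles lie among the eigenvalues of $y_k$ on $\idempotente_{k-1}\idem_\bolda M$; by Step~1 the value $u_0:=-\beta^{\operatorname{rev}(a_k)}_1$ is not one of them, so we may substitute $u=u_0$. On $\idem_\bolda$ we have $u_0-y_k=-b_k$, so $\tfrac1{u_0-y_k}\idem_\bolda=-\tfrac1{b_k}\idem_\bolda$, and we obtain
\[
  e_k\,\frac{1}{b_k}\,e_k\,\idempotente_{k-1}\idem_\bolda \;=\; \frac{W^\bolda_k\bigl(-\beta^{\operatorname{rev}(a_k)}_1\bigr)}{\beta^{\operatorname{rev}(a_k)}_1}\;e_k\,\idempotente_{k-1}\idem_\bolda .
\]
Thus \eqref{eq:338} is equivalent to the scalar identity $W^\bolda_k\bigl(-\beta^{\operatorname{rev}(a_k)}_1\bigr)=\beta^{\operatorname{rev}(a_k)}_1$, which is the precise form of the heuristic \eqref{eq:30}.

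It remains to prove this scalar identity. For $k=1$ (where $\idempotente_0=1$) it is a short explicit computation: by Lemma~\ref{lem:5} the generating function equals
\[
  \sum_{j\ge0}\omega_j u^{-j}\;=\;\frac{u\bigl(\omega_0 u+\omega_1-(\beta_1^\up+\beta_2^\up)\,\omega_0\bigr)}{(u-\beta_1^\up)(u-\beta_2^\up)}
\]
(and similarly, mutatis mutandis, for $a_1=\down$), and substituting the values \eqref{eq:56} and $u=-\beta^{\operatorname{rev}(a_1)}_1$ one finds, after cancelling the common factor $m+n-\delta$, exactly $\beta^{\operatorname{rev}(a_1)}_1$. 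For general $k$ I would argue by induction on $k$, feeding in the recursion of \cite[Proposition~2.13]{MR3244645} relating $W^\bolda_{k+1}(u)$ to $W^{\sfs_k\bolda}_k(u)$ when one conjugates by $s_k$ or $\hat s_k$; the combinatorial point, read off from Proposition~\ref{part:prop:4}, is that on the image of $\idempotente_{k-1}$ the $4$-Young diagram $Y_{k-1}$ has all of its boxes in the two middle diagrams, so the external Young diagram carrying the shift $\beta^{\operatorname{rev}(a_k)}_1$ is empty and its only addable box has content $\beta^{\operatorname{rev}(a_k)}_1$; consequently the factor of $W^\bolda_k(u)$ attached to that diagram contributes $u-\beta^{\operatorname{rev}(a_k)}_1$ to the numerator, and the remaining product telescopes when $u=-\beta^{\operatorname{rev}(a_k)}_1$.

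The step I expect to be the main obstacle is the last one for general $k$: organising the $W^\bolda_k$-recursion and the $4$-Young-diagram bookkeeping carefully enough that the telescoping at the distinguished value is literally correct (the $k=1$ case being a one-line check). By contrast, Steps~1--2 are routine once set up, and they are precisely where non-semisimplicity is handled: as Remark~\ref{rem:5} stresses, one cannot simply substitute a number into \eqref{eq:55} because that identity lives in $\C[[u^{-1}]]$ and the substitution is typically outside the region of convergence (for instance $-\beta^{\operatorname{rev}(a_k)}_1$ may have the same absolute value as the surviving large eigenvalue of $y_k\idempotente_{k-1}\idem_\bolda$); it is the passage to rational functions together with the truncation by $\idempotente_{k-1}$ that rescues the argument.
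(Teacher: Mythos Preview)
Your strategy is genuinely different from the paper's, and the comparison is instructive.

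The paper proves \eqref{eq:338} by a direct algebraic induction inside the algebra, never invoking the series $W^\bolda_k(u)$ at all. The base case $k=1$ is an explicit two--line check (write $\tfrac{1}{b_1}\idempotente$ as a degree--one polynomial in $y_1$ using the level--$2$ relation, then apply $e_1 y_1 e_1=\omega_1 e_1$ and $e_1^2=\omega_0 e_1$). For the inductive step the paper uses the conjugation $e_{k+1}=s_k\hat s_{k+1}e_k\hat s_{k+1}s_k$ (respectively the $\hat s_k$--version when $a_k\neq a_{k+1}$) together with \eqref{eq:29}--\eqref{eq:38} to rewrite $e_{k+1}\tfrac{1}{b_{k+1}}e_{k+1}\idempotente_k$ in terms of $e_k\tfrac{1}{b_k}e_k\idempotente_{k-1}$, obtaining an equation of the shape $(1-\tfrac{1}{b_k^2})\,e_{k+1}\tfrac{1}{b_{k+1}}e_{k+1}\idempotente_k=(1-\tfrac{1}{b_k^2})\,e_{k+1}\idempotente_k$; since $b_k^2-1$ is invertible on $\idempotente_k$ one cancels. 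This is exactly the ``other way'' alluded to in Remark~\ref{rem:5}.

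Your Steps~1--2 are a legitimate way around the non--semisimple obstruction of Remark~\ref{rem:5}: in a finite--dimensional algebra $\tfrac{1}{u-y_k}$ is rational in $u$, so the identity \eqref{eq:55} upgrades from $\C[[u^{-1}]]$ to an identity of rational functions and may be evaluated at any $u_0$ that is not a generalized eigenvalue of $y_k\idempotente_{k-1}$. Your $k=1$ check is correct.

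The gap is Step~3 for $k>1$. You call $W^\bolda_k(-\beta^{\operatorname{rev}(a_k)}_1)=\beta^{\operatorname{rev}(a_k)}_1$ a ``scalar identity'', but for $k>1$ the series $W^\bolda_k(u)$ has coefficients in the (non--commutative) subalgebra generated by $y_1,\dotsc,y_{k-1}$, so what you must actually show is the \emph{operator} identity $W^\bolda_k(u_0)\,\idempotente_{k-1}=\beta^{\operatorname{rev}(a_k)}_1\,\idempotente_{k-1}$ (or at least equality after right multiplication by $e_k$). Your inductive sketch appeals to a recursion from \cite[Proposition~2.13]{MR3244645} and to a ``telescoping'' on the external Young diagram, but neither the precise form of that recursion (which in the walled case differs according to whether $a_{k-1}=a_k$ or not, and introduces factors rational in $u$ and in $y_{k-1}$) nor the claimed cancellation is written down; in particular the orientation $\operatorname{rev}(a_k)$ may change along the induction, so one is not always evaluating $W^\bullet_{k-1}$ at the same point $u_0$, and the bookkeeping is not automatic. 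Until this is made explicit your argument is incomplete precisely at the step you yourself flag as the main obstacle. The paper's conjugation argument sidesteps all of this by never leaving the algebra.
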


\begin{proof}
  First, we observe that the formula makes sense. Indeed since  \(\idempotente_{k-1}\) commutes with \(e_k\), we have
  \begin{equation}
    \label{eq:277}
          e_k \frac{1}{b_k} e_k \idempotente_{k-1} \idem_\bolda =     e_k \frac{1}{b_k}\idempotente_{k-1} e_k \idempotente_{k-1} \idem_\bolda.
  \end{equation}
 Since \(\idempotente_{k-1}\) projects away from the generalized eigenspace of \(y_{k-1}\) with eigenvalue \(\beta^{\operatorname{rev}(a_k)}\), it follows from our partition calculus that it also projects away from the generalized eigenspace of \(y_k\) with eigenvalue \(-\beta^{\operatorname{rev}(a_k)}\). In particular, \(b_k \idempotente_{k-1}\) is invertible.

We prove the claim by induction on \(k\). First consider the case \(k=1\), and suppose \((a_1,a_{-1})={\uparrow \downarrow}\). Then it is easy to verify that
\begin{equation}
  \label{eq:324}
  \frac{1}{\beta^\down_1+y_1} \idempotente \idem_\bolda= -\frac{1}{n(m+n-\delta)}y_1 \idempotente\idem_\bolda + \frac{3n+m-2\delta}{2n(m+n-\delta)} \idempotente \idem_\bolda.
\end{equation}
Hence, recalling that \(e_1 y_1 e_1\idem_\bolda = \omega_1 e_1 \idem_\bolda \), we have
  \begin{multline}
  \label{eq:325}
    e_1 \frac{1}{\beta^\down_1+y_1} e_1 \idempotente \idem_\bolda= \left(- \frac{\omega_1}{n(m+n-\delta)} + \frac{\omega_0(3n+m-2\delta)}{2n(m+n-\delta)}\right) e_1 \idempotente \idem_\bolda\\
     = \frac{2 \delta m - (m+n)^2  + (m+n)(3n+m-2\delta)}{2n(m+n-\delta)} e_1 \idempotente \idem_\bolda = e_1 \idempotente \idem_\bolda.
  \end{multline}

Similarly if \((a_1,a_{-1})={\downarrow \uparrow}\) then
\begin{equation}
  \label{eq:326}
  \frac{1}{\beta^\up_1+y_1} = -\frac{1}{m(m+n-\delta)}y_1 + \frac{3m+n}{2m(m+n-\delta)}.
\end{equation}
Recalling that \(e_1 y_1 e_1 \idem_\bolda= \omega_1^* e_1\idem_\bolda \), where \(\omega_1^*=-\omega_1 + \omega_0^2\), we get
  \begin{multline}  \label{eq:327}
    e_1 \frac{1}{\beta^\up_1+y_1} e_1 \idempotente \idem_\bolda
= \left(- \frac{\omega_1-\omega_0}{m(m+n-\delta)} + \frac{\omega_0(3m+n)}{2m(m+n-\delta)}\right) e_1\idempotente \idem_\bolda\\
     = \frac{-2 \delta m -(m+n)^2  + (m+n)(3m+n)}{2m(m+n-\delta)} e_1 \idempotente \idem_\bolda= e_1\idempotente \idem_\bolda.
  \end{multline}

Let us now consider the inductive step.
Suppose first that \(a_k=a_{k+1}\).  Then
    \begin{multline}    \label{eq:280}
       e_{k+1} \frac{1}{b_{k+1}} e_{k+1} \idempotente_k \idem_\bolda  =  e_{k+1} s_k s_k
      \frac{1}{b_{k+1}}e_{k+1} \idempotente_k \idem_\bolda \\
      = e_{k+1} s_k \frac{1}{b_k} s_k e_{k+1}\idempotente_k\idem_\bolda - e_{k+1} s_k 
      \frac{1}{b_k b_{k+1}} e_{k+1} \idempotente_k \idem_\bolda.
    \end{multline}
  Since \(e_{k+1}=s_k \hat s_{k+1} e_k \hat s_{k+1} s_k\), and since
  \(y_k\) commutes with \(\hat s_{k+1}\), we can rewrite the first summand
  as
  \begin{multline}
    \label{eq:287}
e_{k+1} s_k \frac{1}{b_k} s_k e_{k+1}\idempotente_k\idem_\bolda   = s_k \hat s_{k+1} e_k \frac{1}{b_k} e_k \hat s_{k+1} s_k \idempotente_k \idem_\bolda \\=
     s_k \hat s_{k+1} e_k \hat s_{k+1} s_k \idempotente_k \idem_\bolda =  e_{k+1} \idempotente_k \idem_\bolda
  \end{multline}
  using the inductive hypothesis.
  For the second summand of \eqref{eq:280}, since
  \(y_{k+1}e_{k+1}=-y_{k+2}e_{k+1}\) and \(y_{k+2}\) commutes with \(s_k\),
  we can write
  \begin{equation}
    \label{eq:319}
    \begin{aligned}[t]
   - e_{k+1}s_k  \frac{1}{b_{k+1} b_{k}} e_{k+1} \idempotente_k \idem_\bolda & =- e_{k+1}     \frac{1}{b_{k+1}} s_k \frac{1}{b_k} e_{k+1} \idempotente_k \idem_\bolda\\
  & = - e_{k+1} s_k \frac{1}{b_k^2} e_{k+1} \idempotente_k \idem_\bolda +
    e_{k+1}\frac{1}{b_{k+1}b_k^2} e_{k+1} \idempotente_k \idem_\bolda, \\
  & = - e_{k+1} s_k  e_{k+1} \frac{1}{b_k^2} \idempotente_k \idem_\bolda +
    e_{k+1}\frac{1}{b_{k+1}b_k^2} e_{k+1} \idempotente_k \idem_\bolda, \\
 & = -   \frac{1}{b_k^2} e_{k+1}\idempotente_k \idem_\bolda + \frac{1}{b_k^2} e_{k+1}
    \frac{1}{b_{k+1}} e_{k+1} \idempotente_k \idem_\bolda.
    \end{aligned}
\end{equation}
  Putting all together, we obtain
  \begin{equation}
    \label{eq:322}
    \left( 1 - \frac{1}{b_k^2} \right) e_{k+1} \frac{1}{b_{k+1}}
    e_{k+1} \idempotente_k \idem_\bolda = \left(1 - \frac{1}{b_k^2} \right) e_{k+1} \idempotente_k \idem_\bolda.
  \end{equation}
Notice now that \(b_k\idempotente_k \idem_\bolda\) and \((1-b_k^2)\idempotente_k \idem_\bolda\) are invertible in the \(\idempotente_k \idem_\bolda\)--idempotent truncation, hence we can simplify on both sides and we are done.

Suppose now \(a_k\neq a_{k+1}\). Then we have
\begin{equation}
  \label{eq:323}
  \begin{aligned}[t]
   & e_{k+1} \frac{1}{b_{k+1}} e_{k+1}
    \idempotente_k \idem_\bolda = e_{k+1} \hat s_k \hat s_k
    \frac{1}{b_{k+1}} e_{k+1}
    \idempotente_k \idem_\bolda \\
    & \qquad = e_{k+1} \hat s_k 
    \frac{1}{b_k} \hat s_k e_{k+1}
    \idempotente_k \idem_\bolda + e_{k+1} \hat s_k \frac{1}{b_k} \hat e_k    \frac{1}{b_{k+1}} e_{k+1} \idempotente_k \idem_\bolda\\
    & \qquad
    \begin{multlined}
      = e_{k+1} \hat e_k \frac{1}{b_k} \hat e_k e_{k+1}
      \idempotente_k \idem_\bolda + e_{k+1}
      \frac{1}{b_{k+1}} \hat s_k \hat e_k e_{k+1}
      \frac{1}{c_k} \idempotente_k \idem_\bolda\\
      - e_{k+1}
      \frac{1}{b_{k+1}} \hat e_k \frac{1}{b_k} \hat e_k
      e_{k+1} \frac{1}{c_k} \idempotente_k \idem_\bolda 
\end{multlined}\\
    & \qquad =
      e_{k+1}  e_k e_{k+1} \idempotente_k \idem_\bolda + e_{k+1}
       \hat s_k \hat e_k e_{k+1}
      \frac{1}{c_k^2} \idempotente_k \idem_\bolda  -      e_{k+1}   e_k       e_{k+1}
      \frac{1}{c_k^2} \idempotente_k \idem_\bolda\\
      & \qquad = \left(1 +\frac{1}{c_k^2} - \frac{1}{c_k^2} \right)e_{k+1} \idempotente_k \idem_\bolda.
  \end{aligned}
\end{equation}
The claim follows.
\end{proof}

\begin{lemma}
  \label{lem:47}
  We have for the formula
  \begin{equation}
    \label{eq:310}
        \sigma_k \idem_\bolda  =
            -\idempotente\sqrt\frac{b_{k}}{b_{k+1}} s_k \sqrt\frac{b_{k}}{b_{k+1}}\idempotente \idem_\bolda - \frac{1}{b_{k+1}}\idempotente \idem_\bolda .
  \end{equation}
\end{lemma}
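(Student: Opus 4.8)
The plan is to rewrite \eqref{eq:310} as a statement about $s_k$ and the commuting elements $b_k,b_{k+1}$ alone, and then to reduce that statement to the relations collected in Lemma~\ref{lem:19}. Throughout I work in the idempotent truncation, where $b_k\idempotente$ and $b_{k+1}\idempotente$ are invertible --- this is used already in the very definition \eqref{eq:39} of $Q_k$, and follows, as in the discussion opening Section~\ref{sec:proofs}, from Remark~\ref{rem:12} together with Assumption~\ref{ass:1} --- so that $Q_k=\sqrt{b_{k+1}/b_k}\,\idempotente$ is a unit with inverse $Q_k^{-1}=\sqrt{b_k/b_{k+1}}\,\idempotente$ (the square roots being chosen consistently, as in Section~\ref{sec:invers-square-roots}, so that $\sqrt{b_{k+1}/b_k}$ and $\sqrt{b_k/b_{k+1}}$ are taken mutually inverse in the truncation), and $Q_k^2=\tfrac{b_{k+1}}{b_k}\idempotente$. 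Recall also that $\idempotente$ commutes with $s_k$ (Lemma~\ref{lem:45}) and, being a polynomial in $y_1,\dots,y_{r+t}$, with $b_k,b_{k+1},Q_k$ and their inverses. Pushing the outer $\idempotente$'s around, the right-hand side of \eqref{eq:310} is therefore exactly $-Q_k^{-1}s_kQ_k^{-1}\idem_\bolda-\tfrac1{b_{k+1}}\idempotente\idem_\bolda$, whereas by \eqref{eq:13} the left-hand side is $\sigma_k\idem_\bolda=-Q_ks_kQ_k\idem_\bolda+\tfrac1{b_k}\idempotente\idem_\bolda$; so \eqref{eq:310} is equivalent to
\[
  Q_k^{-1}s_kQ_k^{-1}\idem_\bolda-Q_ks_kQ_k\idem_\bolda=-\Bigl(\tfrac1{b_k}+\tfrac1{b_{k+1}}\Bigr)\idempotente\,\idem_\bolda .
\]

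Since $Q_k$ is a unit in the truncation, one may conjugate this equation by $Q_k$; using $Q_k\tfrac1{b_k}\idempotente Q_k=\tfrac1{b_k}Q_k^2=\tfrac{b_{k+1}}{b_k^2}\idempotente$ and $Q_k\tfrac1{b_{k+1}}\idempotente Q_k=\tfrac1{b_{k+1}}Q_k^2=\tfrac1{b_k}\idempotente$, the claim becomes
\[
  Q_k^2 s_k Q_k^2\,\idem_\bolda=s_k\idempotente\,\idem_\bolda+\Bigl(\tfrac1{b_k}+\tfrac{b_{k+1}}{b_k^2}\Bigr)\idempotente\,\idem_\bolda .
\]
To evaluate $Q_k^2 s_k Q_k^2=\tfrac{b_{k+1}}{b_k}\idempotente\,s_k\,\tfrac{b_{k+1}}{b_k}\idempotente$ I first record the auxiliary identity
\[
  s_k\,\tfrac{b_{k+1}}{b_k}\idempotente\,\idem_\bolda=\tfrac{b_k}{b_{k+1}}\,s_k\idempotente\,\idem_\bolda+\Bigl(\tfrac1{b_k}+\tfrac1{b_{k+1}}\Bigr)\idempotente\,\idem_\bolda ,
\]
which follows directly from Lemma~\ref{lem:19}: writing $\tfrac{b_{k+1}}{b_k}=b_{k+1}\cdot\tfrac1{b_k}$ (these commute), push $s_k$ past $b_{k+1}$ by the second equation of \eqref{eq:171}, then past $\tfrac1{b_k}$ by \eqref{eq:29}, and collect terms. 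Multiplying the auxiliary identity on the left by $\tfrac{b_{k+1}}{b_k}\idempotente$ and simplifying (with $\tfrac{b_{k+1}}{b_k}\cdot\tfrac{b_k}{b_{k+1}}\idempotente=\idempotente$, which commutes with $s_k$, and $\tfrac{b_{k+1}}{b_k}\cdot\tfrac1{b_{k+1}}\idempotente=\tfrac1{b_k}\idempotente$) gives precisely the displayed formula for $Q_k^2 s_k Q_k^2\idem_\bolda$, and the lemma follows.

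The only point needing care, as elsewhere in Section~\ref{sec:proofs}, is the bookkeeping of $\idempotente$: each inverse or square root that appears must sit next to $\idempotente$, and one must check that the elements being inverted --- here $b_k\idempotente$, $b_{k+1}\idempotente$, and hence $Q_k$ --- are indeed units in the relevant truncation, which is exactly the content of the discussion at the beginning of Section~\ref{sec:proofs}. Once this is in place, every step above is a one-line application of Lemma~\ref{lem:19} or of commutativity inside the subalgebra generated by $y_1,\dots,y_{r+t}$, and no genuine obstacle remains.
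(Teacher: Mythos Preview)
Your proof is correct. The route differs from the paper's only in how the square roots are handled: the paper first derives the ``half-power'' commutation identities
\[
  \sqrt{b_{k+1}}\,s_k\,\tfrac{1}{\sqrt{b_k}}=\tfrac{1}{\sqrt{b_{k+1}}}\,s_k\,\sqrt{b_k}+\tfrac{1}{\sqrt{b_kb_{k+1}}},
  \qquad
  \sqrt{b_k}\,s_k\,\tfrac{1}{\sqrt{b_{k+1}}}=\tfrac{1}{\sqrt{b_k}}\,s_k\,\sqrt{b_{k+1}}-\tfrac{1}{\sqrt{b_kb_{k+1}}}
\]
(by multiplying \eqref{eq:29}, \eqref{eq:31} on both sides by suitable square roots) and then rewrites $-Q_ks_kQ_k+\tfrac1{b_k}\idempotente$ step by step. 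You instead conjugate the asserted identity by the unit $Q_k$, which eliminates all half-integer powers and reduces the claim to the integer-power identity $\tfrac{b_{k+1}}{b_k}s_k\tfrac{b_{k+1}}{b_k}\idempotente=s_k\idempotente+(\tfrac1{b_k}+\tfrac{b_{k+1}}{b_k^2})\idempotente$, which you verify directly from \eqref{eq:171} and \eqref{eq:29}. Both arguments rest on exactly the same relations from Lemma~\ref{lem:19}; your version has the small advantage of never needing to manipulate square roots beyond the bare fact that $Q_k$ is invertible with $Q_k^2=\tfrac{b_{k+1}}{b_k}\idempotente$, so the compatibility of square roots with inverses (implicit in the paper's step) never enters.
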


\begin{proof}
Multiplying the first of \eqref{eq:29} by \(\sqrt{b_{k+1}}\) from the left and by \(\sqrt{b_{k}}\) from the right we get
  \begin{equation}
    \label{eq:316}
    \idempotente \sqrt{b_{k+1}} s_k \frac{1}{\sqrt{b_{k}}}\idempotente \idem_\bolda = \idempotente \frac{1}{\sqrt{b_{k+1}}} s_k \sqrt{b_k}\idempotente \idem_\bolda + \frac{1}{\sqrt{b_k b_{k+1}}} \idempotente \idem_\bolda.
  \end{equation}
  Similarly from the second of \eqref{eq:31} we get
  \begin{equation}
    \label{eq:317}
    \idempotente \sqrt{b_{k}} s_k \frac{1}{\sqrt{b_{k+1}}}\idempotente \idem_\bolda = \idempotente\frac{1}{\sqrt{b_{k}}} s_k \sqrt{b_{k+1}}\idempotente \idem_\bolda - \frac{1}{\sqrt{b_k b_{k+1}}} \idempotente \idem_\bolda.
  \end{equation}
  We can then compute using \eqref{eq:316} and \eqref{eq:317}:
  \begin{equation}
    \label{eq:318}
    \begin{aligned}[t]
   &      - \sqrt\frac{b_{k+1}}{b_k} s_k
      \sqrt\frac{b_{k+1}}{b_k} \idempotente\idem_\bolda +
      \frac{1}{b_k}\idempotente \idem_\bolda \\
      &\qquad \qquad=
      - \sqrt\frac{1}{b_k b_{k+1}} s_k  \sqrt{b_{k+1} b_k}\idempotente\idem_\bolda - \frac{1}{b_k } \idempotente\idem_\bolda + \frac{1}{b_k}\idempotente\idem_\bolda \\
     & \qquad\qquad= - \sqrt\frac{b_k}{ b_{k+1}} s_k
      \sqrt{\frac{b_k}{b_{k+1} }} \idempotente\idem_\bolda -
      \frac{1}{b_{k+1}} \idempotente \idem_\bolda.
    \end{aligned}
  \end{equation}
  The claim follows.
\end{proof}

\begin{lemma}
  \label{lem:31}
  The following holds for \(k, k+1\in J\) and \(\bolda \in\Seq_{r,t}\):
  \begin{equation}
    e_k s_{k+1} \frac{1}{b_k} e_k\idempotente\idem_\bolda  = e_k \frac{1}{b_k} s_{k+1} e_k \idempotente\idem_\bolda = 0.\label{eq:179}
\end{equation}
\end{lemma}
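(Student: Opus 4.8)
The plan is to reduce the vanishing to a short divided-difference computation combined with Lemmas~\ref{lem:30} and~\ref{lem:50}. The first equality is immediate: $\tfrac{1}{b_k}$ is a power series in $y_k$, well defined next to $\idempotente$ by Section~\ref{sec:invers-square-roots}, and $y_k$ commutes with $s_{k+1}$ by relation~\ref{item:5}, so $\tfrac{1}{b_k}$ commutes with $s_{k+1}$. For the vanishing I may assume $a_k\neq a_{k+1}$ and $a_{k+1}=a_{k+2}$, since otherwise $e_k\idem_\bolda=0$ or $s_{k+1}\idem_\bolda=0$. Then $\sfs_{k+1}\bolda=\bolda$, so relation~\ref{item:18} (with $k+1$ in place of $k$) reads $s_{k+1}y_{k+1}\idem_\bolda=y_{k+2}s_{k+1}\idem_\bolda-\idem_\bolda$ and $s_{k+1}y_{k+2}\idem_\bolda=y_{k+1}s_{k+1}\idem_\bolda+\idem_\bolda$, and the standard manipulation (as in \cite[Lemma~2.9]{MR3244645}) gives, for every power series $g$,
\begin{equation}
  \label{eq:ddlem31}
  g(y_{k+1})\,s_{k+1}\idem_\bolda=s_{k+1}\,g(y_{k+2})\,\idem_\bolda-\frac{g(y_{k+1})-g(y_{k+2})}{y_{k+1}-y_{k+2}}\,\idem_\bolda .
\end{equation}

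Next, set $c=\beta^{\operatorname{rev}(a_k)}_1$ and $g(x)=\tfrac{1}{c-x}$, so that $\tfrac{1}{b_k}=\tfrac{1}{c+y_k}$. Since $e_k y_k=-e_k y_{k+1}$ by relation~\ref{item:21}, one has $e_k\,p(y_k)=e_k\,p(-y_{k+1})$ for every power series $p$, hence $e_k\tfrac{1}{b_k}=e_k\,g(y_{k+1})$, and so
\[
  e_k\,\tfrac{1}{b_k}\,s_{k+1}\,e_k\,\idempotente\idem_\bolda=e_k\,g(y_{k+1})\,s_{k+1}\,e_k\,\idempotente\idem_\bolda .
\]
Applying \eqref{eq:ddlem31} splits the right-hand side into two summands. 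The first is $e_k s_{k+1}\,g(y_{k+2})\,e_k\,\idempotente\idem_\bolda$; as $g(y_{k+2})$ commutes with $e_k$ by relation~\ref{item:10}, this equals $e_k s_{k+1}e_k\,g(y_{k+2})\,\idempotente\idem_\bolda$, and the first Reidemeister identity $e_k s_{k+1}e_k\idem_\bolda=e_k\idem_\bolda$ (the second equation of \eqref{eq:25}, valid by Lemma~\ref{lem:50}) turns it into $g(y_{k+2})\,e_k\,\idempotente\idem_\bolda$. The second summand is $-e_k\,h(y_{k+1},y_{k+2})\,e_k\,\idempotente\idem_\bolda$ with $h(a,b)=\tfrac{g(a)-g(b)}{a-b}$; using $y_{k+1}e_k=-y_ke_k$ (relation~\ref{item:22}) together with $[y_{k+2},e_k]=0$ one rewrites $h(y_{k+1},y_{k+2})\,e_k=h(-y_k,y_{k+2})\,e_k$, and the partial-fraction identity $h(a,b)=g(a)g(b)$ (which holds precisely because $g(x)=\tfrac{1}{c-x}$) gives $h(-y_k,y_{k+2})=g(-y_k)\,g(y_{k+2})=\tfrac{1}{b_k}\,g(y_{k+2})$. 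Thus the second summand equals $-e_k\tfrac{1}{b_k}e_k\,g(y_{k+2})\,\idempotente\idem_\bolda$, which by Lemma~\ref{lem:30} (the identity $e_k\tfrac{1}{b_k}e_k\idempotente_{k-1}\idem_\bolda=e_k\idempotente_{k-1}\idem_\bolda$, applied to the image of $g(y_{k+2})\idempotente\idem_\bolda$) equals $-g(y_{k+2})\,e_k\,\idempotente\idem_\bolda$. The two summands cancel, which gives the lemma.

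The step I expect to require the most care is the bookkeeping with the idempotents, exactly as in Lemma~\ref{lem:30}: one must verify that $b_k$ (equivalently $c-y_{k+1}$) is invertible wherever its inverse appears, which follows from Assumption~\ref{ass:1} and the partition calculus --- the only possible obstruction is the \emph{large} eigenvalue $-\beta^{\operatorname{rev}(a_k)}_1$, whereas $y_k$, $y_{k+1}$, $y_{k+2}$ act with \emph{small} eigenvalues on every subspace occurring in the argument --- and one must note that replacing $\idempotente$ by $\idempotente_{k-1}$ in the last step is harmless, using $\idempotente_{k-1}\idempotente=\idempotente$ and that $e_k$ commutes with $\idempotente_{k-1}$ (as recorded after Lemma~\ref{lem:45}). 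Once these checks are in place, the argument is the routine divided-difference algebra of \eqref{eq:ddlem31} combined with Lemmas~\ref{lem:30} and~\ref{lem:50}.
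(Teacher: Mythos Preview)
Your argument is correct and follows essentially the same route as the paper. The paper converts $\tfrac{1}{b_k}e_k$ to $\tfrac{1}{c_{k+1}}e_k$ and then uses the ready-made formula \eqref{eq:32} from Lemma~\ref{lem:19} to push $s_{k+1}$ past $\tfrac{1}{c_{k+1}}$, whereas you convert $e_k\tfrac{1}{b_k}$ to $e_k\,g(y_{k+1})$ and push $g(y_{k+1})$ past $s_{k+1}$ via the divided-difference identity; after that both proofs invoke the Reidemeister relation $e_ks_{k+1}e_k=e_k$ and Lemma~\ref{lem:30} to obtain the cancellation.
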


\begin{proof}
  Assume \(a_{k+1}=a_{k+2}\), otherwise the asserted identity is trivial. We have
  \begin{multline}
    \label{eq:114}
      e_k s_{k+1} \frac{1}{b_k} e_k \idempotente\idem_\bolda  = e_k s_{k+1}
      \frac{1}{c_{k+1}} e_k \idempotente \idem_\bolda\\
  = e_k \frac{1}{c_{k+2}} s_{k+1} e_k\idempotente \idem_\bolda -
      e_k \frac{1}{c_{k+1}c_{k+2}} e_k \idempotente\idem_\bolda 
       = \frac{1}{c_{k+2}}\left( e_k - e_k \frac{1}{b_{k}} e_k \right)\idempotente \idem_\bolda= 0,
  \end{multline}
  where we use relation~\ref{item:22} of Definition~\ref{def:2} in the first and in the third equalities, \eqref{eq:32} in the second equality and \eqref{eq:338} in the last equality.
\end{proof}

\begin{lemma}
  \label{lem:32}
  We have \(\thats_k \te_k \idem_\bolda = \thate_k \idem_\bolda\) and \(\te_k \thats_k \idem_\bolda= \thate_k \idem_\bolda \) for any \(\bolda\).
\end{lemma}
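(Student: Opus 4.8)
The two asserted identities, $\thats_k\te_k\idem_\bolda=\thate_k\idem_\bolda$ and $\te_k\thats_k\idem_\bolda=\thate_k\idem_\bolda$, are the $\Phi$-images of the two oriented instances of relation~\ref{item:br:13}, namely $\hats_k e_k\idem_\bolda=\hate_k\idem_\bolda$ and $e_k\hats_k\idem_\bolda=\hate_k\idem_\bolda$; the source $\bolda$ (with $a_k\neq a_{k+1}$) forces the orientations, exactly as in the proof of Lemma~\ref{lem:50}. Since these relations hold already in $\uVB_{r,t}(\bomega)$, hence in its cyclotomic quotient, the plan is a direct computation: substitute the definitions \eqref{eq:13} and \eqref{eq:111} of $\thats_k,\te_k,\thate_k$ in terms of $Q_k$ and the generators, and reduce everything to relation~\ref{item:br:13}, the twisting formulas of Lemma~\ref{lem:19}, and relations~\ref{item:21}--\ref{item:22}.

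Concretely, for the first identity I would start from $\thats_k\te_k\idem_\bolda=-Q_k\hats_k Q_k^2 e_k Q_k\idem_\bolda=-Q_k\hats_k\tfrac{b_{k+1}}{b_k}\idempotente\,e_k Q_k\idem_\bolda$. The inner factor $\tfrac{b_{k+1}}{b_k}\idempotente\,e_k\idem_\bolda$ is first simplified: by relation~\ref{item:22} one has $(y_k+y_{k+1})e_k=0$, whence $b_{k+1}e_k\idem_\bolda=c_k e_k\idem_\bolda$, and the idempotent is moved past $e_k$ using Lemma~\ref{lem:49} together with \eqref{eq:328}. Then $\hats_k$ is pushed across the remaining $b$-factor by means of \eqref{eq:174} and \eqref{eq:34} of Lemma~\ref{lem:19}; this produces a ``main'' term proportional to $\hats_k e_k$, which is rewritten as $\hate_k$ by relation~\ref{item:br:13}, together with correction terms each carrying a factor $\hate_k$. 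The correction terms are collapsed using the remaining forms of relation~\ref{item:br:13} (such as $\hate_k\hats_k\idem_\bolda=e_k\idem_\bolda$), the product $\hate_k e_k$, and Lemma~\ref{lem:30} in the guise $e_k\tfrac1{b_k}e_k\idempotente_{k-1}\idem_\bolda=e_k\idempotente_{k-1}\idem_\bolda$. Reassembling the surviving terms yields $Q_k\hate_k Q_k\idempotente\idem_\bolda=\thate_k\idem_\bolda$. The correction terms are precisely what is needed to produce the overall sign: a naive cancellation of $\hats_k$ would require $(b_k+b_{k+1})e_k=0$, which is false since $(b_k+b_{k+1})e_k=(\beta_1^{\up}+\beta_1^{\down})e_k\neq0$.

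The second identity $\te_k\thats_k\idem_\bolda=\thate_k\idem_\bolda$ is the mirror image. One can either expand $\te_k\thats_k\idem_\bolda=-Q_k e_k Q_k^2\hats_k Q_k\idem_\bolda$ and repeat the argument with relation~\ref{item:21}, the right-handed twisting formulas \eqref{eq:36}, \eqref{eq:37}, \eqref{eq:38}, and the relation $e_k\hats_k\idem_\bolda=\hate_k\idem_\bolda$; or else, since $\thats_k^2\idem_\bolda=\idempotente\idem_\bolda$ by Lemma~\ref{lem:48}, one may multiply the (already established) first identity on the left by $\thats_k$ and thereby reduce the second identity to checking $\thats_k\thate_k\idem_\bolda=\te_k\idem_\bolda$, which is again the same type of computation (now invoking $\hats_k\hate_k\idem_\bolda=e_k\idem_\bolda$).

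The only delicate point is the bookkeeping of idempotents and square roots: the expressions $\tfrac1{b_j}$, $\sqrt{b_j}$ and $Q_k$ are only defined next to a suitable idempotent, and at each step one must make sure that the $b_j$ appearing is actually invertible there. This is exactly what Lemma~\ref{lem:49}, equation~\eqref{eq:328}, and the observation at the beginning of Section~\ref{sec:proofs} (that $\beta_1^{a_k}$ is the unique large generalized eigenvalue of $y_{k+1}\idempotente_k\idem_\bolda$ and is a proper one) are there to guarantee. Once these placements are handled, each individual manipulation is a routine application of Lemma~\ref{lem:19} and the defining relations, and no new idea is required.
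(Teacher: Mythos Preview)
Your plan is correct and follows essentially the same route as the paper: expand $\thats_k\te_k$ via the definitions, strip the middle idempotent using Lemma~\ref{lem:49}, push $\hats_k$ through the $b$-factors with the twisting formulas of Lemma~\ref{lem:19}, invoke relation~\ref{item:br:13} to turn $\hats_k e_k$ into $\hate_k$, and collapse the correction terms with Lemma~\ref{lem:30}. The paper's computation is slightly more direct in that it never rewrites $b_{k+1}e_k$ as $c_k e_k$ but instead commutes $\hats_k$ past $b_{k+1}$ and $\tfrac{1}{b_k}$ in two steps, arriving at three terms two of which visibly cancel; your detour through $c_k$ works equally well but is not needed.
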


\begin{proof}
  We check only the first equality. The other one can be showed in the same way. We have:
  \begin{equation}
    \label{eq:184}
    \begin{aligned}[t]
       \thats_k \te_k  &=  - Q_k \hats_k \idempotente \frac{b_{k+1}}{b_k} e_k
      Q_k    = - Q_k \hats_k  \frac{b_{k+1}}{b_k} e_k Q_k   \\
       &= - Q_k (b_k
      \hats_k -\hate_k) \frac{1}{b_k} e_k Q_k  \\
      & = - Q_k \left(\frac{b_k}{b_{k+1}} 
      \hats_k e_k - \frac{b_k}{b_{k+1}} \hate_k \frac{1}{b_k} e_k -\hate_k \frac{1}{b_k}e_k \right)  Q_k  \\
      & = - Q_k \left(\frac{b_k}{b_{k+1}} 
      \hate_k - \frac{b_k}{b_{k+1}} \hate_k -\hate_k \right)  Q_k  = \thate_k  .\\
    \end{aligned}
  \end{equation}
  In the second equality, we got rid of the idempotent using Lemma~\ref{lem:49}. In the third equality, we used Lemma~\ref{lem:30} thinking of \(\hate_k=\hats_k e_k\).
\end{proof}

\begin{lemma}
  \label{lem:21}
  We have \(\tilde e_k \sigma_{k+1} \tilde e_k \idem_\bolda = \tilde e_k \idem_\bolda \) for all \(k,k+1\in J\) and \(\bolda\in\Seq_{r,t}\) with \(a_{k+1}=a_{k+2}\).
\end{lemma}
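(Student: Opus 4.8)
The plan is to substitute the definitions \eqref{eq:13}, \eqref{eq:111} and split the middle factor, writing $\sigma_{k+1}\idem_\bolda = -Q_{k+1}s_{k+1}Q_{k+1}\idem_\bolda + \tfrac{1}{b_{k+1}}\idempotente\idem_\bolda$ (the hypothesis $a_{k+1}=a_{k+2}$ is exactly the condition under which $\sigma_{k+1}$ is defined). Thus $\tilde e_k\sigma_{k+1}\tilde e_k\idem_\bolda$ is the difference of $\tilde e_k\,\tfrac{1}{b_{k+1}}\idempotente\,\tilde e_k\idem_\bolda$ and $Q_k e_k Q_k Q_{k+1}s_{k+1}Q_{k+1}Q_k e_k Q_k\idem_\bolda$. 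I will show the first term equals $\tilde e_k\idem_\bolda$ and the second term vanishes, so that $\tilde e_k\sigma_{k+1}\tilde e_k\idem_\bolda = \tilde e_k\idem_\bolda$.

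For the first term: since $Q_k$ and $\tfrac{1}{b_{k+1}}$ lie in the commutative subalgebra generated by the $y$'s and $Q_k^2\idempotente = \tfrac{b_{k+1}}{b_k}\idempotente$, one has $Q_k\tfrac{1}{b_{k+1}}\idempotente Q_k = \tfrac{1}{b_k}\idempotente$, so that after sliding the idempotent past $e_k$ (replacing $\idempotente$ by $\idempotente_{k-1}$, which commutes with $e_k$ and with $\tfrac{1}{b_k}$; cf. the discussion at the start of Section~\ref{sec:proofs} and Lemma~\ref{lem:49}) the first term becomes $Q_k e_k\tfrac{1}{b_k}e_k Q_k\idem_\bolda$. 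Lemma~\ref{lem:30} gives $e_k\tfrac{1}{b_k}e_k\idempotente_{k-1}\idem_\bolda = e_k\idempotente_{k-1}\idem_\bolda$, and reabsorbing $\idempotente_{k-1}$ we obtain $Q_k e_k Q_k\idem_\bolda = \tilde e_k\idem_\bolda$, as wanted.

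For the second term: the outer factors $Q_k$ are invertible on the truncation $\idempotente\,\uVB_{r,t}^\cycl\,\idempotente$ (their only dangerous eigenvalue is killed by $\idempotente$, as in the proof of Lemma~\ref{lem:49}), so it suffices to show $e_k Q_k Q_{k+1}s_{k+1}Q_{k+1}Q_k e_k\idempotente\idem_\bolda = 0$. Writing $Q_k Q_{k+1} = \sqrt{b_{k+2}}\,\tfrac{1}{\sqrt{b_k}}\,\idempotente$ (a square root of $\tfrac{b_{k+2}}{b_k}\idempotente$, with the square-root choices of Section~\ref{sec:invers-square-roots} made compatibly; here $\beta_1^{a_{k+2}}=\beta_1^{a_{k+1}}$), and using that $\sqrt{b_{k+2}}$ is a function of $y_{k+2}$ hence commutes with $e_k$ by \ref{item:10}, while $\tfrac{1}{\sqrt{b_k}}$ is a function of $y_k$ hence commutes with $s_{k+1}$ by \ref{item:5}, we compute
\[
e_k Q_kQ_{k+1}s_{k+1}Q_{k+1}Q_k e_k\idempotente = \sqrt{b_{k+2}}\, e_k\,\tfrac{1}{\sqrt{b_k}}\,s_{k+1}\,\tfrac{1}{\sqrt{b_k}}\,e_k\,\sqrt{b_{k+2}}\,\idempotente = \sqrt{b_{k+2}}\, e_k\, s_{k+1}\,\tfrac{1}{b_k}\,e_k\,\sqrt{b_{k+2}}\,\idempotente,
\]
which vanishes by Lemma~\ref{lem:31} (applicable since $a_{k+1}=a_{k+2}$). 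Combining the two computations proves the lemma.

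The main obstacle is the bookkeeping with inverses and square roots: one must keep the idempotent $\idempotente$ (or $\idempotente_{k-1}$) next to every inverse and square root, check that the choice of square roots entering $Q_kQ_{k+1}$ can be split compatibly as $\sqrt{b_{k+2}}\cdot\tfrac{1}{\sqrt{b_k}}$ (Section~\ref{sec:invers-square-roots}), and remember that moving $\idempotente$ across $e_k$ is legitimate only after passing to $\idempotente_{k-1}$, exactly as in Lemma~\ref{lem:49}. Once this framework is in place, the two steps are short and purely computational, the key inputs being Lemmas~\ref{lem:30} and \ref{lem:31}.
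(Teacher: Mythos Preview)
Your argument is correct and follows a somewhat different, cleaner route than the paper's proof. The paper expands $\sigma_{k+1}$ via the alternative formula of Lemma~\ref{lem:47}, so its constant piece becomes $-\tfrac{1}{b_{k+2}}\tilde e_k^2$, which is only $\tfrac{\delta}{b_{k+2}}\tilde e_k$ by Lemma~\ref{lem:20}; the $s_{k+1}$--piece then has to supply the missing $\tilde e_k - \tfrac{\delta}{b_{k+2}}\tilde e_k$, and this is what the longer computation \eqref{eq:281} does (using both Lemma~\ref{lem:31} and the relation $e_k\tfrac{b_{k+1}}{b_k}e_k\idempotente = -\delta e_k\idempotente$). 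You instead expand $\sigma_{k+1}$ via the defining formula \eqref{eq:13}; then the product $Q_kQ_{k+1}$ telescopes to $\sqrt{b_{k+2}/b_k}$, so after commuting $\sqrt{b_{k+2}}$ past $e_k$ and $1/\sqrt{b_k}$ past $s_{k+1}$ the $s_{k+1}$--piece is exactly $e_ks_{k+1}\tfrac{1}{b_k}e_k\idempotente = 0$ by Lemma~\ref{lem:31}, while the constant piece is $Q_ke_k\tfrac{1}{b_k}e_kQ_k = \tilde e_k$ by Lemma~\ref{lem:30}. Your route thus bypasses Lemma~\ref{lem:20} and the computation \eqref{eq:281}. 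One small remark: when you invoke Lemma~\ref{lem:49} to drop the middle $\idempotente$ in Term~A, note that Lemma~\ref{lem:49} literally requires a factor of $c_k$; what you really need is the identity $\idempotente e_k\tfrac{1}{b_k}\idempotente e_k\idempotente = \idempotente e_k\tfrac{1}{b_k}e_k\idempotente$, which follows by the same mechanism (and is used implicitly by the paper in passing from \eqref{eq:216} to \eqref{eq:281}). Your handling of this is at the same level of rigor as the paper's stated convention of omitting the idempotent.
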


\begin{proof}
  Let \(\idem_\bolda\) be such that \(a_k \neq a_{k+1}\), otherwise the claim is trivial. Then we compute
  \begin{equation}
    \label{eq:216}
    \begin{aligned}[t]
      \tilde e_k \sigma_{k+1} \tilde e_k \idem_\bolda&=
      \begin{multlined}[t]
        - Q_k e_k
        \sqrt{\frac{b_{k+1}}{b_k}}\sqrt{\frac{b_{k+1}}{b_{k+2}}}
        s_{k+1} \sqrt{\frac{b_{k+1}}{b_{k+2}}}
        \sqrt{\frac{b_{k+1}}{b_k}} \idempotente e_k
        Q_k \idem_\bolda \\-
        Q_k e_k
        \frac{b_{k+1}}{b_{k} b_{k+2}} \idempotente e_k
        Q_k \idem_\bolda
      \end{multlined}\\
      &= -Q_k  \sqrt\frac{1 }{b_{k+2}}  e_k \frac{b_{k+1}}{b_k} s_{k+1} b_{k+1} e_k  \sqrt\frac{1 }{ b_{k+2}}Q_k \idem_\bolda  -       \frac{1}{b_{k+2}} \tilde e_k \tilde e_k \idem_\bolda.
    \end{aligned}
  \end{equation}
  Now, we have
  \begin{equation}
    \label{eq:281}
    \begin{aligned}[t]
       & e_k \frac{b_{k+1}}{b_k} s_{k+1} b_{k+1} e_k \idempotente \idem_\bolda \\
       & \qquad = \idempotente e_k \frac{b_{k+1}b_{k+2}}{b_k} s_{k+1} e_k\idempotente\idem_\bolda -  e_k \frac{b_{k+1}}{b_k} e_k \idempotente\idem_\bolda\\
&\qquad = b_{k+2} e_k \frac{b_{k+1}}{b_k} s_{k+1} e_k \idempotente \idem_\bolda+ \delta  e_k \idempotente \idem_\bolda\\
      &\qquad= b_{k+2} e_k
      \left( \frac{-\beta_1^{\operatorname{rev}(a_k)} - y_k }{b_k} + \frac{\beta_1^{\operatorname{rev}(a_k)}
          + \beta_1}{b_k} \right) s_{k+1} e_k \idempotente \idem_\bolda+\delta  e_k \idempotente \idem_\bolda\\
      &\qquad = - b_{k+2} e_k s_{k+1}  e_k\idempotente \idem_\bolda + b_{k+2}(\beta_1^{\operatorname{rev}(a_k)}+\beta_1) e_k
      \frac{1}{b_k}s_{k+1} e_k \idempotente \idem_\bolda + \delta e_k\idempotente \idem_\bolda\\
      & \qquad = -b_{k+2} e_k \idempotente \idem_\bolda + \delta e_k \idempotente \idem_\bolda
    \end{aligned}
  \end{equation}
  by Lemma~\ref{lem:31}.
  Hence
  \begin{multline}
    \label{eq:283}
      \tilde e_k \sigma_{k+1} \tilde e_k \idem_\bolda  = 
      Q_k      \sqrt\frac{1}{ b_{k+2}} b_{k+2} e_k \sqrt\frac{1 }{
        b_{k+2}}Q_k\idem_\bolda \\  - \delta Q_k \sqrt\frac{1 }{ b_{k+2}}
      e_k \sqrt\frac{1 }{ b_{k+2}} Q_k\idem_\bolda
      - \frac{1}{b_{k+2}}      \tilde e_k \tilde e_k \idem_\bolda
       = \tilde e_k\idem_\bolda.
  \end{multline}
  The lemma is proved.
\end{proof}

\begin{lemma}
  \label{lem:25}
  We have \(\tilde e_k \sigma_{k-1} \tilde e_k\idem_\bolda = \tilde e_k \idem_\bolda\) for all  \(\bolda\) with \(a_{k-1}=a_k\).
\end{lemma}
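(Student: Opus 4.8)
I would prove Lemma~\ref{lem:25} by running the argument of Lemma~\ref{lem:21} with the ``spectator'' index shifted from \(k+2\) to \(k-1\). As there, the first move is to observe that we may assume \(a_k\neq a_{k+1}\), since otherwise \(\tilde e_k\idem_\bolda=0\) and there is nothing to prove; together with the hypothesis \(a_{k-1}=a_k\) this fixes the orientation pattern on the strands \(k-1,k,k+1\) and makes \(s_{k-1}\idem_\bolda\) meaningful. I would then substitute the definitions \eqref{eq:13} and \eqref{eq:111} and expand \(\tilde e_k\sigma_{k-1}\tilde e_k\idem_\bolda\), taking \(\sigma_{k-1}\) in its original shape \(-Q_{k-1}s_{k-1}Q_{k-1}+\tfrac{1}{b_{k-1}}\idempotente\) from Definition~\ref{def:3}: this is the form suited to the present situation, because the variable \(y_{k-1}\) (and hence \(b_{k-1},c_{k-1}\)) commutes with \(e_k\) by relation~\ref{item:10}, and it is precisely \(b_{k-1}\) that appears in the additive correction term, just as \(b_{k+2}\) did in \eqref{eq:216}.

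The product then splits into two summands. The one coming from \(\tfrac{1}{b_{k-1}}\idempotente\) equals \(\tfrac{1}{b_{k-1}}\tilde e_k\tilde e_k\idem_\bolda\) (the factor \(\tfrac{1}{b_{k-1}}\) commutes through \(Q_k\) and \(e_k\)), which by Lemma~\ref{lem:20} is \(-\tfrac{\delta}{b_{k-1}}\tilde e_k\idem_\bolda\). In the summand coming from \(-Q_{k-1}s_{k-1}Q_{k-1}\) I would merge the square roots \(Q_kQ_{k-1}\), push the \(b_{k-1}\)-dependence out to the two ends (legitimate since both \(e_k\) and \(s_{k-1}\) commute with \(y_{k-1}\)), and treat the remaining \(b_k,b_{k+1}\)-factors as in \eqref{eq:216}, using relations~\ref{item:21} and \ref{item:22} to rewrite \(b_{k+1}e_k\) as \(c_ke_k\). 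Exactly as in \eqref{eq:281}, this reduces the computation to an expression of the same type treated there: one splits the surviving \(\tfrac{1}{b_k}\) as \(\tfrac{-\beta^{\operatorname{rev}(a_k)}_1-y_k}{b_k}+\tfrac{\beta^{\operatorname{rev}(a_k)}_1+\beta^{a_k}_1}{b_k}\), disposes of the first part with Lemma~\ref{lem:30}, of the quadratic-in-\(e_k\) pieces with \(e_k^2=\delta e_k\) and Lemma~\ref{lem:20}, and is left with a term of the shape \(e_k\tfrac{1}{b_k}s_{k-1}e_k\idempotente\idem_\bolda\).

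The heart of the proof is that this last term vanishes, i.e.\ the \(s_{k-1}\)-analogue of Lemma~\ref{lem:31}: for \(\bolda\) with \(a_{k-1}=a_k\) (and \(a_k\neq a_{k+1}\)),
\[
e_k\,\tfrac{1}{b_k}\,s_{k-1}\,e_k\,\idempotente\idem_\bolda \;=\; e_k\,s_{k-1}\,\tfrac{1}{b_k}\,e_k\,\idempotente\idem_\bolda \;=\;0.
\]
Here I would argue directly rather than copy the proof of Lemma~\ref{lem:31}: by \eqref{eq:29} (with \(k\) replaced by \(k-1\)) we get \(e_k\tfrac{1}{b_k}s_{k-1}\idempotente\idem_\bolda=e_k\bigl(s_{k-1}\tfrac{1}{b_{k-1}}-\tfrac{1}{b_{k-1}b_k}\bigr)\idempotente\idem_\bolda\); since \(e_k\) commutes with \(b_{k-1}\), since \(e_ks_{k-1}e_k\idem_\bolda=e_k\idem_\bolda\) already holds in \(\uVB_{r,t}(\bomega)\) (it is relation~\ref{item:br:16}, reformulated via Lemma~\ref{lem:50} and reindexed as in \eqref{eq:25}), and since \(e_k\tfrac{1}{b_k}e_k\idempotente_{k-1}\idem_\bolda=e_k\idempotente_{k-1}\idem_\bolda\) by \eqref{eq:338}, substituting these in yields \(e_k\tfrac{1}{b_{k-1}}\idempotente\idem_\bolda-\tfrac{1}{b_{k-1}}e_k\idempotente\idem_\bolda=0\). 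With this vanishing, the surviving terms recombine — the two \(-\tfrac{\delta}{b_{k-1}}\tilde e_k\)-type contributions cancelling against each other exactly as in \eqref{eq:283} — to leave \(\tilde e_k\idem_\bolda\).

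The main obstacle is not any single calculation but, as everywhere in this section, the bookkeeping of the idempotent \(\idempotente\): every inverse and square root of a \(b_j\) is legitimate only next to \(\idempotente\), so each passage of such a factor past \(e_k\) or \(s_{k-1}\), and each introduction or removal of \(\idempotente_{k-1}\), must be justified via Lemma~\ref{lem:45}, the idempotent-shifting identities of Lemma~\ref{lem:49}, and the eigenvalue facts recorded before Lemma~\ref{lem:45} (that \(\beta^{a_j}_1\) is the unique large, and is a proper, generalized eigenvalue of \(y_j\idempotente_{j-1}\idem_\bolda\)). One could instead try to deduce Lemma~\ref{lem:25} from Lemma~\ref{lem:21} via the order-reversing symmetry of the oriented Brauer category, but reconciling that symmetry with the cyclotomic parameters \eqref{eq:56} and with \(\Phi\) is itself delicate, so I would take the direct route above.
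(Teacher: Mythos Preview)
Your overall framing is right and matches the paper: assume \(a_k\neq a_{k+1}\), expand \(\tilde e_k\sigma_{k-1}\tilde e_k\idem_\bolda\) using \(\sigma_{k-1}=-Q_{k-1}s_{k-1}Q_{k-1}+\tfrac{1}{b_{k-1}}\idempotente\), and note that the \(\tfrac{1}{b_{k-1}}\)-part contributes \(\tfrac{1}{b_{k-1}}\tilde e_k^2\). Your proof of the auxiliary vanishing \(e_k\tfrac{1}{b_k}s_{k-1}e_k\idempotente\idem_\bolda=0\) is also correct.

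The gap is in your treatment of the first summand. After merging the square roots you get \(Q_kQ_{k-1}=\sqrt{\tfrac{b_{k+1}}{b_k}}\sqrt{\tfrac{b_k}{b_{k-1}}}\idempotente=\sqrt{\tfrac{b_{k+1}}{b_{k-1}}}\idempotente\): the \(b_k\)-factor \emph{cancels}, so there is no ``surviving \(\tfrac{1}{b_k}\)'' to split as in \eqref{eq:281}, and no term of the shape \(e_k\tfrac{1}{b_k}s_{k-1}e_k\) arises naturally. Your attempt to transplant the mechanics of \eqref{eq:281} breaks down here because the combinatorics of the indices is genuinely different: in Lemma~\ref{lem:21} the product \(Q_kQ_{k+1}\) (in the alternative form of Lemma~\ref{lem:47}) leaves a \(\tfrac{1}{b_k}\) sandwiched between \(e_k\) and \(s_{k+1}\), whereas here \(Q_kQ_{k-1}\) does not.

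The paper's route for the first summand is more direct and bypasses any analogue of Lemma~\ref{lem:31}. After pulling \(\sqrt{1/b_{k-1}}\) past \(e_k\) and using \(b_{k+1}e_k=c_ke_k\), one is left with \(-Q_k\sqrt{\tfrac{1}{b_{k-1}}}\,e_ks_{k-1}c_ke_k\,\sqrt{\tfrac{1}{b_{k-1}}}Q_k\). Now apply \eqref{eq:172} in the form \(s_{k-1}c_k=c_{k-1}s_{k-1}-1\), then use \(e_ks_{k-1}e_k=e_k\) (relation~\ref{item:br:16}, via Lemma~\ref{lem:50}) and \(e_k^2=(m+n)e_k\), and finally rewrite \(-c_{k-1}=b_{k-1}-(\beta_1^\up+\beta_1^\down)\). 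The resulting extra terms cancel exactly against the analogous expansion of the second summand \(Q_k\sqrt{\tfrac{1}{b_{k-1}}}\,e_k\tfrac{c_k}{b_k}e_k\,\sqrt{\tfrac{1}{b_{k-1}}}Q_k\) (using \(c_k=-b_k+(\beta_1^\up+\beta_1^\down)\) and Lemma~\ref{lem:30}), leaving \(\tilde e_k\idem_\bolda\). So your vanishing lemma, while true, is not needed here; the key inputs are the commutation \eqref{eq:172} and the identity \(c_j+b_j=\beta_1^\up+\beta_1^\down\).
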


\begin{proof}
  We suppose \(a_k\neq a_{k+1}\), since otherwise the claim is trivial.
  Then the term \(\tilde e_k \sigma_{k-1} \tilde e_k\idem_\bolda\) is equal to
  \begin{equation}
    \label{eq:149}
    - Q_k  e_k \sqrt{\frac{b_{k+1}}{b_{k-1}}} s_{k-1} \sqrt{\frac{b_{k+1}}{b_{k-1}}} \idempotente e_k Q_k  \idem_\bolda + Q_k  e_k \frac{b_{k+1} }{b_{k-1} b_{k}} \idempotente  e_k Q_k\idem_\bolda.
  \end{equation}
   Let us expand the first summand:
   \begin{equation}\label{eq:150}
     \begin{aligned}
&- Q_k  e_k \sqrt{\frac{b_{k+1}}{b_{k-1}}} s_{k-1} \sqrt{\frac{b_{k+1}}{b_{k-1}}} \idempotente e_k Q_k  \idem_\bolda \\
& \qquad=       -Q_k \sqrt\frac{1}{b_{k-1} }  e_k s_{k-1} {c_{k}} e_k  \sqrt\frac{1}{b_{k-1} }Q_k \idem_\bolda \\
      &\qquad = -Q_k \sqrt\frac{1}{b_{k-1} }  e_k (c_{k-1} s_{k-1} -1) e_k  \sqrt\frac{1}{b_{k-1}}\idem_\bolda\\
   & \qquad =   \tilde e_k \idem_\bolda - (\beta_1^\uparrow + \beta_1^\downarrow) Q_k \sqrt\frac{1}{b_{k-1} } e_k \sqrt\frac{1}{b_{k-1}} Q_k \idem_\bolda\\
   & \qquad \qquad+ (m+n) Q_k \sqrt\frac{1}{b_{k-1}} e_k \sqrt\frac{1}{b_{k-1}} Q_k \idem_\bolda,
 \end{aligned}
\end{equation}
where we used \(- c_{k-1} = b_{k-1} - (\beta_1^\uparrow + \beta_1^\downarrow)\).
  If we expand the second summand of \eqref{eq:149} we obtain:
  \begin{equation}
    \label{eq:152}
 Q_k  e_k \frac{b_{k+1} }{b_{k-1} b_{k}} \idempotente  e_k Q_k\idem_\bolda=   Q_k \sqrt\frac{1}{b_{k-1}} e_k \frac{c_{k}}{b_k} e_k \sqrt\frac{1}{b_{k-1}}Q_k \idem_\bolda .
  \end{equation}
  We use again \(c_k = - b_{k-1} + (\beta_1^\uparrow + \beta_1^\downarrow)\) and we get:
  \begin{equation}
    \label{eq:153}
    - (m+n) Q_k \sqrt\frac{1}{b_{k-1}} e_k \sqrt\frac{1}{b_{k-1}} Q_k \idem_\bolda+ (\beta_1^\uparrow+\beta_1^\downarrow) Q_k \sqrt\frac{1}{b_{k-1}}  e_k \sqrt\frac{1}{b_{k-1}} Q_k\idem_\bolda.
  \end{equation}
  Comparing \eqref{eq:150} and \eqref{eq:153} we get the claim.
\end{proof}

\begin{lemma}
  \label{lem:33}
  We have \( \thats_k \te_{k+1} \te_k  =  \ts_{k+1} \thate_k  \) for \(k,k+1\in J\).
\end{lemma}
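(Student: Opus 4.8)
The plan is to verify the identity after multiplying by each $\idem_\bolda$. By the vanishing conventions for $\hat s_k,e_k,\hat e_k$ (and the convention for $\ts_{k+1}$ parallel to Definition~\ref{def:7}, which forces $\ts_{k+1}\idem_{\sfs_k\bolda}=0$ unless $a_k=a_{k+2}$), both sides vanish on $\idem_\bolda$ unless $a_k\neq a_{k+1}$ and $a_{k+1}\neq a_{k+2}$, i.e.\ unless $(a_k,a_{k+1},a_{k+2})=(x,\bar x,x)$ for some arrow $x$ (so $a_k=a_{k+2}$ is automatic). Fix such a $\bolda$ and abbreviate $\alpha=\beta_1^{\operatorname{rev}(a_k)}$, $\beta=\beta_1^{a_k}$, so that on $\idem_\bolda$ one has $b_k=\alpha+y_k$, $b_{k+1}=\beta+y_{k+1}$, $b_{k+2}=\alpha+y_{k+2}$, while on $\idem_{\sfs_k\bolda}$ the values of $b_k$ and $b_{k+1}$ get $\alpha$ and $\beta$ interchanged. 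I shall also use $\alpha+\beta=m+n-\delta=\omega_0-\delta$, the scalar already met in Lemmas~\ref{lem:20} and \ref{lem:21}.

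Next I substitute Definition~\ref{def:3}, so the left-hand side becomes $-Q_k\hat s_k\,(Q_kQ_{k+1})\,e_{k+1}\,(Q_{k+1}Q_k)\,e_k\,Q_k\idem_\bolda$ and the right-hand side becomes $\big(-Q_{k+1}s_{k+1}Q_{k+1}+\tfrac1{b_{k+1}}\idempotente\big)Q_k\hat e_kQ_k\idempotente\idem_\bolda$. Since the $y_i$, and hence the $b_j$ together with their inverses and square roots next to $\idempotente$, all commute, I first collapse the products of square-root factors (e.g.\ $Q_kQ_{k+1}$ is a single square-root element in the commutative subalgebra generated by $y_k,y_{k+1},y_{k+2}$), and then move past each diagram generator those factors that commute with it (for instance $1/\sqrt{b_k}$ commutes with $e_{k+1}$ and $\sqrt{b_{k+2}}$ commutes with $e_k$ and $\hat e_k$, since $k$, resp.\ $k+2$, is not among the two strands involved). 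The factors that do not commute are pushed across $\hat s_k$, $s_{k+1}$, $e_{k+1}$, $e_k$, $\hat e_k$ by means of the commutation formulas of Lemma~\ref{lem:19}, together with the identities $b_{k+2}e_{k+1}\idem_\bolda=c_{k+1}e_{k+1}\idem_\bolda$ and $b_{k+1}e_k\idem_\bolda=c_ke_k\idem_\bolda$ and their analogues for $\hat e_k$ (immediate from relations \ref{item:21} and \ref{item:22}); each such step produces a correction term carrying an $\hat e$ and, once $\alpha+\beta$ is substituted, a scalar. After this the leading term on the left carries the diagram word $\hat s_ke_{k+1}e_k$, which I rewrite by the appropriately oriented instance of relation \ref{item:br:14}, namely $\hat s_ke_{k+1}e_k\idem_\bolda=s_{k+1}\hat e_k\idem_\bolda$ (valid in $\uVB_{r,t}(\bomega)$, hence in its cyclotomic quotient), together with \ref{item:br:13}; this reproduces exactly the first summand $-Q_{k+1}s_{k+1}Q_{k+1}Q_k\hat e_kQ_k\idempotente\idem_\bolda$ of the right-hand side.

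It then remains to show that the accumulated correction terms add up to the second summand $\tfrac1{b_{k+1}}\idempotente\,Q_k\hat e_kQ_k\idempotente\idem_\bolda$. For this I use Lemma~\ref{lem:49} to slide the idempotent $\idempotente$ next to the $c_{k+1}$-factors created above (and past $\hat s_k$, via the $\hat s$-part of that lemma), Lemma~\ref{lem:30} --- in the $\hat e_k$-guise already used in Lemma~\ref{lem:32} --- to annihilate the patterns $e_k\tfrac1{b_k}e_k$, and Lemma~\ref{lem:31} for the mixed pattern $e_ks_{k+1}\tfrac1{b_k}e_k$ which appears after moving a $1/b$ past $s_{k+1}$; Lemma~\ref{lem:45} lets me commute $\idempotente$ past $s_k$ whenever needed. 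After cancelling the terms that occur with opposite signs, what survives is precisely the claimed correction term, re-expressed through $Q_{k+1}$ by inverting $Q_{k+1}^2=\tfrac{b_{k+2}}{b_{k+1}}\idempotente$.

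The main obstacle, as in the proofs of Lemmas~\ref{lem:20}, \ref{lem:21} and \ref{lem:48}, is purely the bookkeeping: one must keep careful track of which $\sqrt{b_\bullet}^{\pm1}$ is being moved and on which side of the orientation-changing $\hat s_k$, $\hat e_k$ it sits --- this is precisely why Lemma~\ref{lem:19} records $\hat s_kb_k$ and $\hat s_kb_{k+1}$ by separate formulas, since ``$b_{k+1}$'' evaluates to $\beta+y_{k+1}$ on $\idem_\bolda$ but to $\alpha+y_{k+1}$ on $\idem_{\sfs_k\bolda}$. Once one is disciplined about this, each individual step is a commutation from Lemma~\ref{lem:19}, an application of \ref{item:21}/\ref{item:22} or \ref{item:br:13}/\ref{item:br:14}, or an invocation of Lemma~\ref{lem:30}, \ref{lem:31} or \ref{lem:49}; the only genuine content is that the correction terms conspire to produce the ``$1/b_{k+1}$'' summand of $\ts_{k+1}$, which is again a shadow of the change of parameter from $\omega_0$ to $\delta$ emphasised in Remark~\ref{theremark}.
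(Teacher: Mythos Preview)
Your outline is in the right spirit and mirrors the paper's strategy: expand both sides via Definition~\ref{def:3}, shuttle the $\sqrt{b_\bullet}$-factors using Lemma~\ref{lem:19} and the identities coming from \ref{item:21}/\ref{item:22}, and identify the leading diagram word via the oriented relation $\hat s_k e_{k+1} e_k=s_{k+1}\hat e_k$ from \ref{item:br:14}. In that sense there is no new idea missing.

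There are, however, two places where your write-up diverges from what actually happens, and one of them is a genuine gap.

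\textbf{Unnecessary machinery and a misleading scalar.} The paper's computation for this lemma needs neither Lemma~\ref{lem:30} nor Lemma~\ref{lem:31}, and the scalar $\alpha+\beta=m+n-\delta$ never appears. After moving $\sqrt{b_{k+2}}$ outward and applying \eqref{eq:34} once, the single correction term is $\hat e_k\frac{1}{b_k}e_{k+1}e_k$; using $\hat e_k\frac{1}{b_k}=\hat e_k\frac{1}{c_{k+1}}$ (from \ref{item:21}) and $\frac{1}{c_{k+1}}e_{k+1}=\frac{1}{b_{k+2}}e_{k+1}$ (from \ref{item:22}) together with $\hat e_k e_{k+1} e_k=\hat e_k$ from \ref{item:br:16}, this collapses immediately to $\frac{1}{b_{k+2}}\hat e_k$. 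There is no ``accumulation'' of correction terms to be cancelled, and no invocation of $e_k\frac{1}{b_k}e_k=e_k$ or of Lemma~\ref{lem:31}. Your remark that corrections carry ``a scalar once $\alpha+\beta$ is substituted'' is simply not what occurs here; that mechanism belongs to Lemmas~\ref{lem:20} and \ref{lem:21}, not to this one.

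\textbf{The idempotent removal is the real issue.} The step you gloss over is the justification that the intermediate $\idempotente$'s may be dropped, i.e.\ that
\[
\idempotente\,\hat s_k\,\idempotente\,e_{k+1}\,\idempotente\,e_k\,\idempotente\idem_\bolda \;=\; \idempotente\,\hat s_k\,e_{k+1}\,e_k\,\idempotente\idem_\bolda
\qquad\text{and}\qquad
\idempotente\,s_{k+1}\,\idempotente\,\hat e_k\,\idempotente\idem_\bolda \;=\; \idempotente\,s_{k+1}\,\hat e_k\,\idempotente\idem_\bolda.
\]
Lemma~\ref{lem:49} only gives you $c_k\idempotente\hat s_k\idempotente=c_k\hat s_k\idempotente$ and its $e$-analogues, which is not quite enough for a product of three generators with the $\hat s_k$ on the \emph{left}. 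The paper handles this by introducing the auxiliary idempotent $\theta_{k+1}$ (projection away from the eigenvalue $-\beta_1^{a_k}$ for $y_{k+1}$), observing $e_k\eta_k=e_k\theta_{k+1}$, and then commuting $\theta_{k+2}$ past $\hat s_k$ and $e_{k+1}$; this is an eigenvalue argument specific to the word $\hat s_k e_{k+1} e_k$ and is not covered by the lemmas you cite. Without it, all the subsequent manipulations with $\frac{1}{b_k}$ and $\sqrt{b_\bullet}$ are formally meaningless, since those inverses only exist next to $\idempotente$.

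So: keep your overall plan, drop the references to Lemmas~\ref{lem:30}, \ref{lem:31} and to $\alpha+\beta$, reduce both sides to the two-term form
\[
-\idempotente\sqrt{\tfrac{b_{k+2}}{b_kb_{k+1}}}\Big(s_{k+1}\hat e_k-\tfrac{1}{b_{k+2}}\hat e_k\Big)\sqrt{\tfrac{b_{k+1}b_{k+2}}{b_k}}\idempotente,
\]
and supply the $\theta$-argument for the idempotent removal.
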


\begin{proof}
We compute
  \begin{equation}
    \label{eq:187}
    \begin{aligned}[t]
      \thats_k \te_{k+1} \te_k & = - Q_k \hats_k \sqrt\frac{b_{k+1}}{b_k}\idempotente \sqrt\frac{b_{k+2}}{b_{k+1}} e_{k+1} \sqrt\frac{b_{k+2}}{b_{k+1}}\idempotente \sqrt\frac{b_{k+1}}{b_k} e_k Q_k \\
      & = - Q_k \hats_k \sqrt\frac{b_{k+1}}{b_k} \sqrt\frac{b_{k+2}}{b_{k+1}} e_{k+1} \sqrt\frac{b_{k+2}}{b_{k+1}} \sqrt\frac{b_{k+1}}{b_k} e_k Q_k  \\
      &= -Q_k \sqrt{b_{k+2}} \left(\frac{1}{b_{k+1}} \hats_k - \frac{1}{b_{k+1}}\hate_k \frac{1}{b_k}  \right) e_{k+1}e_k \sqrt{b_{k+2}} Q_k \\
      &= -\idempotente \sqrt\frac{b_{k+2}}{b_kb_{k+1} } \left(s_{k+1} \hate_{k} - \hate_k \frac{1}{c_{k+1}} e_{k+1}e_k \right) \sqrt\frac{b_{k+1} b_{k+2}}{b_k}\idempotente \\
      &= -\idempotente \sqrt\frac{b_{k+2}}{b_k b_{k+1}} \left(s_{k+1} \hate_{k} - \frac{1}{b_{k+2}} \hate_k \right) \sqrt\frac{b_{k+1} b_{k+2}}{b_k}\idempotente .
    \end{aligned}
  \end{equation}
  We should explain why we can omit the idempotents in the second equality.
Let  \(\theta_{k+1}\) be the idempotent projecting onto the generalized eigenspaces of \(y_{k+1}\idem_\bolda\) with eigenvalues different from \(-\beta^{a_k}_1\). Note that \(e_k \eta_k \idem_\bolda = e_k \theta_{k+1} \idem_\bolda\), and that \(\theta_{k+1}\) can be expressed as a polynomial in the variable \(y_{k+1} \idem_\bolda\). Now,
 we have 
 \begin{multline}
 \idempotente \hat s_k \idempotente e_{k+1} \idempotente e_k \idempotente\idem_\bolda= \idempotente \hat s_k  \eta_{k+1} e_{k+1} \eta_k \eta_{k+1}  e_k \idempotente\idem_\bolda = \idempotente \hat s_k  \theta_{k+2} e_{k+1} \theta_{k+1} \theta_{k+2}  e_k \idempotente \idem_\bolda
\\  = \idempotente \theta_{k+2} \hat s_k  \theta_{k+2} e_{k+1}   e_k \eta_{k+2} \theta_{k+2}\idempotente \idem_\bolda = \idempotente \hat s_k e_{k+1} e_k \idempotente \idem_\bolda.
\end{multline}
as claimed. On the other hand, we have
  \begin{equation}
    \label{eq:189}
    \begin{aligned}[t]
      \ts_{k+1}\thate_{k}  &= -Q_{k+1} s_{k+1} \sqrt\frac{b_{k+2}}{b_{k}}\hate_k Q_k + Q_{k+1} \frac{1}{b_{k+1}} \hate_k Q_{k+1} \\
      &=- Q_{k+1}\sqrt\frac{1}{b_k}s_{k+1} \hate_k \sqrt{b_{k+1}} Q_{k} + \idempotente   \sqrt\frac{1}{b_k b_{k+1}}\hate_k Q_k.
    \end{aligned}
  \end{equation}
  Here we can again omit the idempotent in the middle, since 
  \begin{equation}
  \idempotente s_{k+1} \idempotente \hate_k \idempotente  = \idempotente s_{k+1} \eta_k \eta_{k+2} \hate_k \idempotente   = \idempotente \eta_k s_{k+1} \hate_k \eta_{k+2} \idempotente  = \idempotente s_{k+1} \hate_k \idempotente.
  \end{equation}
  So \eqref{eq:187} and \eqref{eq:189} are the same and the lemma follows.
\end{proof}

\begin{lemma}
  \label{lem:34}
  We have \(\ts_k \thate_{k+1} \te_k = \thats_{k+1} \te_k \) for \(k,k+1\in J\).
\end{lemma}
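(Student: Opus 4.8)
The plan is to run the argument parallel to the proof of Lemma~\ref{lem:33}. First I would reduce to the orientations that matter: if $a_k=a_{k+1}$ then $\te_k\idem_\bolda=0$, while if $a_{k+1}=a_{k+2}$ then $\thate_{k+1}\idem_\bolda=0$ and also $\thats_{k+1}\te_k\idem_\bolda=0$, so in either case both sides vanish and there is nothing to prove. Hence we may assume $a_k\neq a_{k+1}$ and $a_{k+1}\neq a_{k+2}$ (equivalently $a_k=a_{k+2}\neq a_{k+1}$), under which $\ts_k\idem_{\sfs_{k+1}\bolda}$, $\thate_{k+1}\idem_\bolda$ and $\thats_{k+1}\te_k\idem_\bolda$ all make sense.

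Then I would expand both sides using Definition~\ref{def:3}, writing $Q_j=\sqrt{b_{j+1}/b_j}\,\idempotente$ as in~\eqref{eq:39}:
\[
  \ts_k\thate_{k+1}\te_k\idem_\bolda=\Bigl(-Q_k s_k Q_k+\tfrac{1}{b_k}\idempotente\Bigr)Q_{k+1}\hat e_{k+1}Q_{k+1}\,Q_k e_k Q_k\idem_\bolda,
\]
and, on the other side, $\thats_{k+1}\te_k\idem_\bolda=-Q_{k+1}\hat s_{k+1}Q_{k+1}\,Q_k e_k Q_k\idem_\bolda$. Exactly as in Lemma~\ref{lem:33}, the first real step is to strip the interior occurrences of $\idempotente$ sitting between the generators $s_k,\hat s_{k+1},e_k,\hat e_{k+1}$: using Lemma~\ref{lem:45}, the eigenvalue bookkeeping of Lemma~\ref{lem:49}, and the auxiliary idempotents $\eta_j,\theta_j$ introduced in the proofs of Lemmas~\ref{lem:33} and~\ref{lem:49}, one checks that each such $\idempotente$ may be removed, leaving honest words in the affine walled Brauer algebra multiplied by rational expressions in the $b_j$. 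Then I would slide those rational factors through $s_k$ and $\hat s_{k+1}$ by the commutation formulas collected in Lemma~\ref{lem:19} (each passage through $\hat s_{k+1}$ producing a $\hat e_{k+1}$-correction term), apply the defining relation~\ref{item:br:14} in the form $s_k\hat e_{k+1}e_k=\hat s_{k+1}e_k$ together with~\ref{item:br:13}, and collapse the terms of the shape $e_k\tfrac{1}{b_k}e_k$ by Lemma~\ref{lem:30} (and $e_k s_{k+1}\tfrac{1}{b_k}e_k$ by Lemma~\ref{lem:31}, should such terms arise). The leftover corrections, together with the $\tfrac{1}{b_k}\idempotente$ summand coming from $\ts_k$, should then cancel, so that what survives is precisely $-Q_{k+1}\hat s_{k+1}Q_{k+1}\,Q_k e_k Q_k\idem_\bolda$, i.e.\ the expansion of $\thats_{k+1}\te_k\idem_\bolda$.

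The genuinely delicate point --- the main obstacle, as throughout this section --- is not the algebra of commuting the $Q_j$'s and $b_j$'s around, which is routine, but the justification that every interior $\idempotente$ may legitimately be removed at each stage: the symbols $\sqrt{b_{j+1}/b_j}$ only make sense next to $\idempotente$ (Remark~\ref{rem:5}), so each such removal must be underpinned by showing that the relevant composite of generators already lands in the small-eigenvalue range, which is exactly what the partition calculus of Section~\ref{sec:cycl-quot-categ} and the auxiliary projections $\eta_j,\theta_j$ accomplish.
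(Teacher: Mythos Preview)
Your proposal is correct and follows essentially the same route as the paper: expand both sides, strip the interior idempotents, commute the rational factors in the $b_j$ past $s_k$ via Lemma~\ref{lem:19} (yielding $-s_k\tfrac{1}{b_k}+\tfrac{1}{b_kb_{k+1}}=-\tfrac{1}{b_{k+1}}s_k$), and apply $s_k\hat e_{k+1}e_k=\hat s_{k+1}e_k$. In the paper's computation no terms of the shape $e_k\tfrac{1}{b_k}e_k$ or $e_k s_{k+1}\tfrac{1}{b_k}e_k$ actually arise, so Lemmas~\ref{lem:30} and~\ref{lem:31} turn out to be unnecessary here.
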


\begin{proof}
  We compute
  \begin{equation}
    \label{eq:190}
    \begin{aligned}[t]
      \ts_k \thate_{k+1} \te_k & = \left(-Q_k s_k Q_k  + \frac{1}{b_k} \idempotente \right)Q_{k+2} \hate_{k+1} \sqrt\frac{b_{k+2}}{b_{k}}\idempotente  e_k Q_{k+1}\\
      &= Q_k\sqrt{ b_{k+2}}\left(- s_k \frac{1}{b_k} +
        \frac{1}{b_k b_{k+1}} \right) \hate_{k+1}
       e_k \sqrt{b_{k+2}} Q_k\\
      &= -Q_k\sqrt{b_{k+2}} \frac{1}{b_{k+1}} s_k \hate_{k+1}
       e_k \sqrt{b_{k+2}} Q_k\\
       & = - Q_{k+1}  \sqrt\frac{1}{b_k} \hats_{k+1} e_k \sqrt{b_{k+2}} Q_k .
    \end{aligned}
  \end{equation}
  In the second line, we could omit the idempotent as in the previous proof. On the other side, we have
  \begin{equation}
    \label{eq:191}
      \thats_{k+1} \te_k = -Q_{k+1} \hats_{k+1}
      \sqrt\frac{b_{k+2}}{b_k} e_k
      Q_k     = -Q_{k+1}\sqrt\frac{ 1}{b_k}
      \hats_{k+1} e_k \sqrt{b_{k+2}} Q_k.
  \end{equation}
  hence they coincide.
\end{proof}

\begin{lemma}
  \label{lem:37}
  We have \(\thats_{k+1} \te_{k} \te_{k+1} =  \ts_{k} \thate_{k+1}\).
\end{lemma}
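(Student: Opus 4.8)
The plan is to imitate, almost verbatim, the computations in Lemmas~\ref{lem:33} and \ref{lem:34}. First I would reduce to the unique non-trivial orientation. Since $\te_k\idem_\bolda\ne 0$ forces $a_k\ne a_{k+1}$, and $\te_{k+1}\idem_\bolda,\thats_{k+1}\idem_\bolda\ne 0$ force $a_{k+1}\ne a_{k+2}$, the left–hand side vanishes unless the pattern at positions $k,k+1,k+2$ is $\up\down\up$ or $\down\up\down$ (equivalently $a_k\ne a_{k+1}$, $a_{k+1}\ne a_{k+2}$ and $a_k=a_{k+2}$); the right–hand side $\ts_k\thate_{k+1}\idem_\bolda$ vanishes for exactly the same orientations, so it suffices to fix such a $\bolda$ and check the equality of the two maps in $\idem_{\sfs_{k+1}\bolda}\uVB_{\bolda}^\cycl\idem_\bolda$.

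Next I would substitute the formulas of Definition~\ref{def:3} into both sides and start pushing things around. Writing out $\te_{k+1}=Q_{k+1}e_{k+1}Q_{k+1}$, $\te_k=Q_ke_kQ_k$ and $\thats_{k+1}=-Q_{k+1}\hat s_{k+1}Q_{k+1}$, the inner products $Q_{k+1}Q_k$ and $Q_kQ_{k+1}$ coincide (the $b$'s at the three indices involved commute and $\idempotente$ commutes with each $b_j$), so the left–hand side becomes
\[
\thats_{k+1}\te_k\te_{k+1}\idem_\bolda
 = -\,Q_{k+1}\,\hat s_{k+1}\,\sqrt{\tfrac{b_{k+2}}{b_{k+1}}}\,\sqrt{\tfrac{b_{k+1}}{b_k}}\,\idempotente\, e_k\,\sqrt{\tfrac{b_{k+1}}{b_k}}\,\sqrt{\tfrac{b_{k+2}}{b_{k+1}}}\,\idempotente\, e_{k+1}\,Q_{k+1}\idem_\bolda .
\]
From here one transports the square roots to the two ends exactly as in \eqref{eq:187}: $\hat s_{k+1}$ and $e_{k+1}$ commute with $b_k$, $e_k$ commutes with $b_{k+2}$, and the commutators of $\hat s_{k+1}$ past $b_{k+1},b_{k+2}$ are governed by the relations of Lemma~\ref{lem:19} (the $\hat s_{k+1}$–analogues of \eqref{eq:173}, \eqref{eq:34}, \eqref{eq:36}), producing the expected $\hat e_{k+1}$–terms. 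As in Lemmas~\ref{lem:32} and \ref{lem:33} the intermediate idempotents are harmless: using Lemma~\ref{lem:49} together with the eigenvalue bookkeeping of Lemmas~\ref{lem:1}, \ref{lem:2}, \ref{lem:3} and the remarks at the beginning of Section~\ref{sec:proofs} (in particular $e_k\idempotente_k\idem_\bolda=e_k\idempotente_{k+1}\idem_\bolda$ and the properness of the big eigenvalue of $y_{k+1}\idempotente_k$), one inserts or deletes $\idempotente$ between $\hat s_{k+1}$, $e_k$ and $e_{k+1}$ freely. The one genuine simplification beyond this is a single application of Lemma~\ref{lem:30}: after collecting terms (and reading $\hat e_{k+1}=\hat s_{k+1}e_{k+1}$, as in the proof of Lemma~\ref{lem:32}) one meets an expression of shape $e_{k+1}\tfrac1{b_{k+1}}e_{k+1}$, which Lemma~\ref{lem:30} turns into $e_{k+1}$, after which the residual rational factors in $y_k,y_{k+1},y_{k+2}$ telescope. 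Finally I would expand the right–hand side $\ts_k\thate_{k+1}\idem_\bolda=\bigl(-Q_ks_kQ_k+\tfrac1{b_k}\idempotente\bigr)Q_{k+1}\hat e_{k+1}Q_{k+1}\idempotente\idem_\bolda$ in the same way — here, as in the proof of Lemma~\ref{lem:34}, the extra summand $\tfrac1{b_k}\idempotente$ contributes — and compare the two results.

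The main obstacle is purely organisational rather than conceptual: one must keep careful track of which idempotents $\idempotente$ (and which truncations $\idempotente_j$) are present at each stage so that every inverse and square root of a $b_i$ that is written down is actually defined in the relevant idempotent truncation, and one must make sure that each step where an $\idempotente$ is moved or cancelled is justified by Lemma~\ref{lem:49} or by one of the eigenspace lemmas and not merely assumed. Beyond this bookkeeping, no idea is needed that is not already present in the proofs of Lemmas~\ref{lem:30}, \ref{lem:33} and \ref{lem:34}.
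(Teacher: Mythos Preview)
Your plan is essentially the paper's: expand both sides via Definition~\ref{def:3}, push the $b$--factors to the ends using the commutations $[\hat s_{k+1},b_k]=[e_k,b_{k+2}]=[e_{k+1},b_k]=0$ together with the $\hat s_{k+1}$--instance of \eqref{eq:173}, and then match. One prediction in your sketch is off, though: no term of the shape $e_{k+1}\tfrac{1}{b_{k+1}}e_{k+1}$ appears, so Lemma~\ref{lem:30} is never invoked. What actually happens is that moving $\hat s_{k+1}$ past $b_{k+2}$ via \eqref{eq:173} produces
\[
-\,Q_{k+1}\sqrt{\tfrac{1}{b_k}}\bigl(b_{k+1}\hat s_{k+1}-\hat e_{k+1}\bigr)\,e_k e_{k+1}\,\sqrt{\tfrac{1}{b_k}}\,Q_{k+1}\idem_\bolda,
\]
and the two simplifications used from here are purely the Brauer relations: the $\hat e_{k+1}$--piece collapses by \ref{item:br:16} as $\hat e_{k+1}e_ke_{k+1}=\hat e_{k+1}$, while the $\hat s_{k+1}$--piece matches the leading term of $\ts_k\thate_{k+1}$ via \ref{item:br:14}, namely $\hat s_{k+1}e_ke_{k+1}=s_k\hat e_{k+1}$. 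After that the two sides agree term by term (each side being one ``$s\hat e$''--term plus one ``$\hat e_{k+1}$''--term sandwiched between the same rational factors in the $b$'s). So your outline works, but replace the appeal to Lemma~\ref{lem:30} by a direct use of \ref{item:br:16} and \ref{item:br:14}; the bookkeeping for the idempotents is exactly as you describe.
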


\begin{proof}
  We compute
  \begin{equation}
    \label{eq:217}
    \begin{aligned}[t]
     \thats_{k+1} \te_{k} \te_{k+1}  &= - Q_{k+1}  \hats_{k+1} Q_{k+1} Q_k  e_{k} Q_k Q_{k+1} e_{k+1} Q_{k+1}\\
      &= - Q_{k+1}  \sqrt\frac{1}{b_k} \left(b_{k+1} \hats_{k+1} - \hate_{k+1} \right) e_{k} e_{k+1} \sqrt\frac{1}{b_k } Q_{k+1} \\
      &=-Q_k \sqrt{ b_{k+2}} \hats_{k+1} e_k e_{k+1} \sqrt\frac{1}{b_k} Q_{k+1} + Q_{k+1}   \sqrt\frac{1}{b_k}\hate_{k}\sqrt\frac{1}{b_k }Q_{k+1} 
    \end{aligned}
  \end{equation}
  and
  \begin{equation}
    \label{eq:218}
    \begin{aligned}[t]
      \ts_{k}\thate_{k+1}  & = - Q_{k} s_{k} \sqrt\frac{b_{k+2}}{b_{k}} \hate_{k+1} Q_{k+1} + Q_{k+1} \frac{1}{b_{k}} \hate_{k+1} Q_{k+1} \\
      &=-Q_k  \sqrt{b_{k+2}} s_{k} \hate_{k+1} Q_{k+1} +  Q_{k+1} \sqrt\frac{1}{b_k}\hate_k \sqrt\frac{1}{b_k} Q_{k+1} .\\
    \end{aligned}
  \end{equation}
  So \eqref{eq:217} and \eqref{eq:218} agree.
\end{proof}

\begin{lemma}
  \label{lem:38}
  We have \(\ts_{k+1} \thate_{k} \te_{k+1} = \thats_{k} \te_{k+1}\).
\end{lemma}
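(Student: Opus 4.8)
The statement is the oriented instance of relation~\ref{item:br:14} complementary to the one established in Lemma~\ref{lem:37}, and the plan is to mimic that proof. First I would record the orientation constraints: $\thate_k\idem_\bolda$, $\te_{k+1}\idem_\bolda$ and $\thats_k\idem_\bolda$ are simultaneously nonzero only when $a_k=a_{k+2}\neq a_{k+1}$ (so the pattern on positions $k,k+1,k+2$ is $\up\down\up$ or $\down\up\down$); for every other $\bolda$ both sides of the asserted identity vanish, so I may assume we are in this case. In particular $\operatorname{rev}(a_k)=a_{k+1}$, hence the factor $b_{k+1}$ sitting next to $\hat e_k$ collapses: on the image of $\hat e_k$ one has $y_{k+1}=-y_k$ by relation~\ref{item:22}, so that $b_{k+1}$ becomes a pure $c$-type factor in $y_k$. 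This is the mechanism behind the change of parameter from $\omega_0$ to $\delta$ in this computation.

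Next I would expand both sides using Definition~\ref{def:3}: write $\ts_{k+1}=-Q_{k+1}s_{k+1}Q_{k+1}+\frac1{b_{k+1}}\idempotente$, $\thate_k=Q_k\hat e_kQ_k\idempotente$, $\te_{k+1}=Q_{k+1}e_{k+1}Q_{k+1}$ and $\thats_k\te_{k+1}\idem_\bolda=-Q_k\hat s_kQ_kQ_{k+1}e_{k+1}Q_{k+1}\idem_\bolda$. The heart of the computation is to move the square-root factors $Q_k,Q_{k+1}$ past the crossings $\hat s_k$ and $s_{k+1}$: here I would use the commutation identities \eqref{eq:171}--\eqref{eq:38} of Lemma~\ref{lem:19} (which introduce the correcting $\hat e$-terms), the identity $e_k\tfrac1{b_k}e_k\idempotente_{k-1}=e_k\idempotente_{k-1}$ of Lemma~\ref{lem:30} together with its consequence Lemma~\ref{lem:31}, and relations~\ref{item:21}--\ref{item:22} to let $\hat e_k$ and $e_{k+1}$ absorb the neighbouring $b$- and $c$-factors. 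One also needs the $\frac1{b_{k+1}}\idempotente$ summand of $\ts_{k+1}$: as in the proof of Lemma~\ref{lem:34}, it combines with a $\frac1{b_kb_{k+1}}$-type piece produced when $s_{k+1}$ is pushed through a $Q$, and these pieces cancel. Words of length three that arise are collapsed using the affine relations~\ref{item:br:14} (which already hold in $\uVB_{r,t}(\bomega)$) and \ref{item:br:13}; comparing the two normalized expressions then yields the lemma.

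The step I expect to be the main obstacle is the bookkeeping of the "middle" idempotents $\idempotente$ sitting between the generators: since $1/b_\bullet$ and $\sqrt{b_\bullet}$ only make sense next to $\idempotente$, each deletion or reinsertion of such an idempotent must be justified by an eigenvalue argument from the partition calculus, exactly as in the proofs of Lemmas~\ref{lem:33}, \ref{lem:34} and \ref{lem:37} (using Lemma~\ref{lem:49} and the fact that $e_\ell\eta_k\idem_\bolda=e_\ell\theta\idem_\bolda$ for a suitable polynomial idempotent $\theta$ in a neighbouring variable $y$). Alternatively, since Lemmas~\ref{lem:32}, \ref{lem:48} and \ref{lem:37} are all available at this point, one may try to deduce Lemma~\ref{lem:38} formally from Lemma~\ref{lem:37} by pre- or post-composing with $\hat\sigma_k$ and using $\hat\sigma_k^2\idem_\bolda=\idempotente\idem_\bolda$ together with the \ref{item:br:13}-relations $\hat\sigma_k\tau_k=\hat\tau_k=\tau_k\hat\sigma_k$ to trade $e$'s for $\hat e$'s and $s$'s for $\hat s$'s; this would avoid repeating the computation, at the cost of carefully tracking which products of generators are forced to vanish by the orientations.
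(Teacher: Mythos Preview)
Your overall strategy---expand both sides via Definition~\ref{def:3}, move the $\sqrt{b_\bullet}$ factors to the outside, and match the two expressions using an instance of~\ref{item:br:14} in $\uVB_{r,t}(\bomega)$---is exactly what the paper does. But you over-engineer the middle of the argument. Neither Lemma~\ref{lem:30} nor Lemma~\ref{lem:31} is used, and the ``change of parameter'' mechanism you describe (the collapse of $b_{k+1}$ on the image of $\hat e_k$ via $y_{k+1}=-y_k$) plays no role here; that device is essential for Lemma~\ref{lem:20} but irrelevant for this lemma. No $\hat e_k\tfrac{1}{b_k}\hat e_k$-type terms ever appear.

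The paper's computation is much shorter than you anticipate. Since $b_k$ commutes with $s_{k+1}$ and $e_{k+1}$, while $b_{k+2}$ commutes with $\hat e_k$ and $\hat s_k$, all square-root factors in these variables slide straight to the outside. The only nontrivial manipulation is that the two summands of $\ts_{k+1}$ combine, on the left-hand side, to give the factor $(-s_{k+1}b_{k+2}+1)$, which by~\eqref{eq:171} equals $-b_{k+1}s_{k+1}$; this is the analogue of the step you quote from Lemma~\ref{lem:34}, and your description of it as a ``cancellation'' is essentially right. After this, both sides are of the form $-Q_k\sqrt{b_{k+2}}\,X\,e_{k+1}\sqrt{1/b_k}\,Q_{k+1}$ with $X=s_{k+1}\hat e_k$ on the left and $X=\hat s_k$ on the right, and these agree by the relation $s_{k+1}\hat e_k e_{k+1}=\hat s_k e_{k+1}$ from~\ref{item:br:14}. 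Your concern about the intermediate idempotents $\idempotente$ is legitimate; the paper suppresses this, relying implicitly on the same eigenvalue arguments it spelled out in the proofs of Lemmas~\ref{lem:33} and~\ref{lem:34}.
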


\begin{proof}
  We compute
  \begin{equation}
    \label{eq:219}
    \begin{aligned}[t]
      \ts_{k+1} \thate_{k} \te_{k+1} & = \left(- Q_{k+1} s_{k+1} Q_{k+1} + \frac{1}{b_{k+1}}\idempotente \right) Q_k \hate_k \sqrt\frac{b_{k+2}}{b_{k}} \idempotente e_{k+1} Q_{k+1}\\
      & = Q_{k+1} \sqrt\frac{1}{b_k} \left( -s_{k+1} b_{k+2} + 1\right) \hate_k e_{k+1} \sqrt\frac{1}{b_k} Q_{k+1}\\
      & = - Q_k \sqrt{b_{k+2}}  s_{k+1} \hate_k e_{k+1} \sqrt\frac{1}{b_k} Q_{k+1}\\
    \end{aligned}
  \end{equation}
  and
  \begin{equation}
    \label{eq:220}
      \thats_{k} \te_{k+1} = -Q_k\hats_{k} \sqrt\frac{b_{k+2}}{b_{k}} \idempotente e_{k+1} Q_{k+1} = - Q_k \sqrt{b_{k+2}} \hats_{k}  e_{k+1} \sqrt\frac{1}{b_k} Q_{k+1}.
  \end{equation}
  Since they agree the lemma is proved.
\end{proof}

\begin{lemma}
  \label{lem:24}
  The braid relation \(\sigma_k \sigma_{k+1} \sigma_k = \sigma_{k+1} \sigma_k \sigma_{k+1}\) holds.
\end{lemma}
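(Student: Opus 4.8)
The plan is to exhibit $\sigma_k$ and $\sigma_{k+1}$ as \emph{simultaneous} conjugates of $-s_k$ and $-s_{k+1}$ inside $\idempotente\,\uVB_{r,s}^\cycl\,\idempotente$, by one and the same invertible element, and then simply conjugate the braid relation of the symmetric group.

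First I would reduce to the only nontrivial case. Since $\sigma_k\idem_\bolda=0$ unless $a_k=a_{k+1}$ and $\sigma_{k+1}\idem_\bolda=0$ unless $a_{k+1}=a_{k+2}$, it suffices to treat $\bolda$ with $a_k=a_{k+1}=a_{k+2}$; then $\hat s_k\idem_\bolda=\hat s_{k+1}\idem_\bolda=0$, so by Lemma~\ref{lem:52} we have $s_ks_{k+1}s_k\idem_\bolda=s_{k+1}s_ks_{k+1}\idem_\bolda$ and $s_k^2\idem_\bolda=s_{k+1}^2\idem_\bolda=\idem_\bolda$. Write $\beta=\beta_1^{\operatorname{rev}(a_k)}$, so that $b_i=\beta+y_i$ for $i=k,k+1,k+2$ with the \emph{same} $\beta$; in particular $b_kb_{k+1}b_{k+2}$ is a symmetric polynomial in $y_k,y_{k+1},y_{k+2}$. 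On $\idempotente\idem_\bolda$ the elements $b_k,b_{k+1},b_{k+2}$ are invertible (small versus large eigenvalues, cf.\ Remark~\ref{rem:12}), and $\idempotente$ commutes with $s_k,s_{k+1}$ by Lemma~\ref{lem:45}, so all the inverses and square roots below make sense by Section~\ref{sec:invers-square-roots}; I suppress $\idempotente$ throughout.

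Next I would produce the conjugating element. Multiplying the formula \eqref{eq:12} for $\sigma_k$ on the left by $Q_k$ and on the right by $Q_k^{-1}$, and using $Q_k^2=b_{k+1}/b_k$, gives $Q_k\sigma_kQ_k^{-1}=-\tfrac{b_{k+1}}{b_k}s_k+\tfrac1{b_k}$; hence, using $s_kb_{k+1}=b_ks_k+1$ from \eqref{eq:171},
\[
 (Q_k\sigma_kQ_k^{-1})\,b_{k+1}=-\tfrac{b_{k+1}}{b_k}(b_ks_k+1)+\tfrac{b_{k+1}}{b_k}=-b_{k+1}s_k ,
\]
so $\sigma_k\,(Q_k^{-1}b_{k+1})=-(Q_k^{-1}b_{k+1})\,s_k$, and since $s_k$ commutes with $b_{k+2}$ (relation~\ref{item:5}) also $\sigma_k\,\Omega_k=-\Omega_k\,s_k$ for $\Omega_k:=Q_k^{-1}b_{k+1}\sqrt{b_{k+2}}$. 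The same computation at index $k+1$ yields $\sigma_{k+1}\,\Omega_{k+1}=-\Omega_{k+1}\,s_{k+1}$ for $\Omega_{k+1}:=Q_{k+1}^{-1}b_{k+2}\sqrt{b_k}$. Now $\Omega_k$ and $\Omega_{k+1}$ are both square roots of $b_kb_{k+1}b_{k+2}$, and I would take $\Omega:=\sqrt{b_kb_{k+1}b_{k+2}}$, which, being a polynomial in the \emph{symmetric} element $b_kb_{k+1}b_{k+2}$, commutes with $s_k$ and with $s_{k+1}$ (the standard fact that symmetric polynomials in $y_i,y_{i+1}$ centralise $s_i$, immediate from \eqref{eq:171}). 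Checking that $\Omega$ may be used in place of both $\Omega_k$ and $\Omega_{k+1}$ — i.e.\ $\Omega^{-1}\sigma_k\Omega=-s_k$ and $\Omega^{-1}\sigma_{k+1}\Omega=-s_{k+1}$ — then reduces the braid relation to the symmetric-group one:
\[
 \sigma_k\sigma_{k+1}\sigma_k=\Omega(-s_k)(-s_{k+1})(-s_k)\Omega^{-1}=-\Omega\, s_ks_{k+1}s_k\,\Omega^{-1}=-\Omega\, s_{k+1}s_ks_{k+1}\,\Omega^{-1}=\sigma_{k+1}\sigma_k\sigma_{k+1}
\]
on $\idempotente\idem_\bolda$, which is what we want.

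The one point requiring care is the passage from $\Omega_k,\Omega_{k+1}$ to the common $\Omega$: a priori the square roots hidden in $Q_k$, $Q_{k+1}$, $\sqrt{b_k}$, $\sqrt{b_{k+2}}$ and $\sqrt{b_kb_{k+1}b_{k+2}}$ are fixed independently, so $\Omega_k$, $\Omega_{k+1}$ and $\Omega$ a priori differ by sign idempotents. One resolves this by noting that on $\idempotente\idem_\bolda$ the generalized eigenvalues of $b_k,b_{k+1},b_{k+2}$ all cluster near the large value $\beta$, hence lie in a region where the square roots can be (and in Section~\ref{sec:invers-square-roots} effectively are) chosen coherently; together with the symmetry of $b_kb_{k+1}b_{k+2}$ this makes the sign idempotents trivial, or at least central for $s_k$ and $s_{k+1}$, so that they cancel. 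This is the main obstacle; once it is dispatched everything else is routine arithmetic with Lemma~\ref{lem:19}. As an alternative that avoids the conjugation altogether, one can expand both sides of $\sigma_k\sigma_{k+1}\sigma_k=\sigma_{k+1}\sigma_k\sigma_{k+1}$ directly using \eqref{eq:12}, Lemma~\ref{lem:47} and Lemma~\ref{lem:19}, in the spirit of Lemmas~\ref{lem:33}--\ref{lem:38}, at the cost of a considerably longer computation.
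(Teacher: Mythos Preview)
Your approach is genuinely different from the paper's and considerably more conceptual.  The paper simply expands both sides of \(\sigma_k\sigma_{k+1}\sigma_k=\sigma_{k+1}\sigma_k\sigma_{k+1}\) using \eqref{eq:12} and Lemma~\ref{lem:19}, obtaining eight summands on each side, and matches them term by term (exactly the ``considerably longer computation'' you allude to at the end).  Your idea of finding a single invertible \(\Omega\) with \(\Omega^{-1}\sigma_k\Omega=-s_k\) and \(\Omega^{-1}\sigma_{k+1}\Omega=-s_{k+1}\), thereby transporting the symmetric-group braid relation, is cleaner and explains \emph{why} the relation holds rather than merely verifying it.  The intertwining identity \(\sigma_k\,(Q_k^{-1}b_{k+1})=-(Q_k^{-1}b_{k+1})\,s_k\) is correct and is the heart of the matter.

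The weak point is your handling of the square-root ambiguity, and your proposed resolution is not convincing as written.  The construction in Section~\ref{sec:invers-square-roots} produces square roots via the Chinese Remainder Theorem with an \emph{independent} sign choice on each eigenspace; nothing there forces the coherence you invoke, and the ``clustering near \(\beta\)'' argument does not by itself pin down signs.  Moreover, even granting that \(\Omega=\sqrt{b_kb_{k+1}b_{k+2}}\) commutes with \(s_k\) and \(s_{k+1}\), you still need \(\epsilon_k=\Omega_k\Omega^{-1}\) and \(\epsilon_{k+1}=\Omega_{k+1}\Omega^{-1}\) to each commute with \emph{both} \(s_k\) and \(s_{k+1}\), which you assert but do not justify.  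The clean fix is much simpler than your discussion suggests: the paper's convention (visible for instance in \eqref{eq:316}--\eqref{eq:318}) is that \(Q_k=\sqrt{b_{k+1}}/\sqrt{b_k}\) with the individual roots \(\sqrt{b_i}\) fixed once and for all.  Under that convention one has literally
\[
\Omega_k=Q_k^{-1}b_{k+1}\sqrt{b_{k+2}}=\sqrt{b_k}\,\sqrt{b_{k+1}}\,\sqrt{b_{k+2}}=Q_{k+1}^{-1}b_{k+2}\sqrt{b_k}=\Omega_{k+1},
\]
so there is no need to introduce a third element \(\Omega\) or any sign idempotents at all: take \(\Omega:=\Omega_k=\Omega_{k+1}\) and your conjugation argument goes through immediately.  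With this one-line replacement, your proof is complete and substantially shorter than the paper's.
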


\begin{proof}
  Let \(\bolda\) be a sequence with \(a_k=a_{k+1}=a_{k+2}\). All the summands of the formulas in this proof are supposed to be multiplied on the right (or on the left) with \(\idem_\bolda\), but we omit it for the sake of clearness. We compute 
  \begin{subequations}
    \begin{align}
        \sigma_k \sigma_{k+1} \sigma_k = & -Q_ks_k\sqrt\frac{b_{k+2}}{b_k}s_{k+1}\sqrt\frac{b_{k+2}}{b_k}s_k
        Q_k \label{eq:125}\\
        &  + Q_k s_k \sqrt\frac{b_{k+2}}{b_k} s_{k+1}
        \frac{Q_{k+1}}{b_k}\label{eq:126}\\
        &  + Q_k s_k \frac{1}{b_k}s_kQ_k\label{eq:127} \\
        &  - Q_ks_k\frac{Q_k}{b_k b_{k+1}} \label{eq:128}\\
        &  +\frac{Q_{k+1}}{b_k}s_{k+1}\sqrt\frac{b_{k+2}}{b_k} s_kQ_k\label{eq:129}\\
        &  -\frac{Q_{k+1}}{b_k} s_{k+1}\frac{Q_{k+1}}{b_k}\label{eq:130}\\
        &  -\frac{Q_k}{b_k b_{k+1}} s_k Q_k \label{eq:131}\\
        &  + \frac{1}{b_k^2 b_{k+1}} \idempotente.\label{eq:132}
    \end{align}
  \end{subequations}
  Recall that the idempotent \(\idempotente\) commutes with \(s_k\) and \(s_{k+1}\), so it is sufficient if it appears once in every summand (in the term \(Q_k\) or \(Q_{k+1}\)). Now we consider the various pieces.
  \begin{eqnarray}
    \label{eq:133}
      &\text{\eqref{eq:125}} \; =\; -Q_k \sqrt{b_{k+2}} s_k \frac{1}{b_k}
      s_{k+1}s_{k} \sqrt{b_{k+2}}Q_k&\\
      & \;=\; - Q_k \sqrt{b_{k+2}} \frac{1}{b_{k+1}} s_k s_{k+1}s_k \sqrt{b_{k+2}}Q_k - Q_k \frac{ \sqrt{b_{k+2}}}{b_kb_{k+1}} s_{k+1}s_k \sqrt{b_{k+2}} Q_k.
    \nonumber
  \end{eqnarray}
  \begin{equation}
    \label{eq:120}
    \begin{aligned}
     \eqref{eq:127}  + \eqref{eq:131}  = Q_k \frac{1}{b_{k+1}} Q_k + \frac{Q_k}{b_k b_{k+1}} s_k Q_k  -  \frac{Q_k}{b_k b_{k+1}} s_k Q_k  = \frac{1}{b_k} \idempotente.
    \end{aligned}
  \end{equation}
  Moreover, \eqref{eq:126} + \eqref{eq:130} equals the following
  \begin{multline}
    \label{eq:123}
      \frac{Q_k}{b_{k+1}} s_k \sqrt\frac{b_{k+2}}{b_k} s_{k+1} Q_{k+1} + \frac{Q_k}{b_{k}b_{k+1}}\sqrt\frac{b_{k+2}}{b_k} s_{k+1} Q_{k+1} - \frac{Q_{k+1}}{b_k} s_{k+1} \frac{Q_{k+1}}{b_k} \\
       = \frac{Q_{k+1}}{\sqrt{b_k}} s_k s_{k+1} \frac{Q_{k+1}}{\sqrt{b_k}}.
  \end{multline}
  Moreover,
  \begin{equation}
    \label{eq:124}
    \begin{aligned}[t]
      \eqref{eq:129} & = Q_{k+1} s_{k+1} \sqrt\frac{b_{k+2}}{b_k} s_k
      \frac{Q_k}{b_{k+1}} + Q_{k+1} s_{k+1} \sqrt\frac{b_{k+2}}{b_k}
      \frac{Q_k}{b_k b_{k+1}} \\
      & = \frac{Q_{k+1}}{\sqrt{b_k}} s_{k+1} s_k \frac{Q_{k+1}}{\sqrt{b_k}}
       + \frac{Q_{k+1}}{b_k} s_{k+1} \frac{Q_{k+1}}{b_k}.
    \end{aligned}
  \end{equation}
  and
  \begin{equation}
    \label{eq:145}
    \eqref{eq:128} = - \frac{Q_k}{b_{k+1}} s_k \frac{Q_k}{b_{k+1}} - \frac{1}{b_k^2 b_{k+1}} \idempotente.
  \end{equation}
  So we can rewrite:
  \begin{equation}
    \label{eq:146}
    \sigma_k \sigma_{k+1} \sigma_k = \eqref{eq:133} + \eqref{eq:123} + \frac{1}{b_k}\idempotente - \frac{Q_k}{b_{k+1}} s_k \frac{Q_k}{b_{k+1}} + \eqref{eq:124}.
  \end{equation}

On the other side, the r.h.s. is:
  \begin{subequations}
    \begin{align}
      \sigma_{k+1} \sigma_k \sigma_{k+1} = & - Q_{k+1}s_{k+1} \sqrt\frac{b_{k+2}}{b_k} s_k \sqrt\frac{b_{k+2}}{b_k} s_{k+1} Q_{k+1} \label{eq:135} \\
      & + Q_{k+1} s_{k+1} \sqrt\frac{b_{k+2}}{b_k} s_k \frac{Q_k}{b_{k+1}} \label{eq:136}\\
      & + Q_{k+1} s_{k+1} \frac{b_{k+2}}{b_k b_{k+1}} s_{k+1} Q_{k+1} \label{eq:137}\\
      & - Q_{k+1} s_{k+1} \frac{Q_{k+1}}{b_k b_{k+1}} \label{eq:138}\\
      & + \frac{Q_k}{b_{k+1}}s_k \sqrt\frac{b_{k+2}}{b_k} s_{k+1}Q_{k+1} \label{eq:139}\\
      & - \frac{Q_k}{b_{k+1}} s_k \frac{Q_k}{b_{k+1}} \label{eq:140}\\
      & - \frac{Q_{k+1}}{b_k b_{k+1}} s_{k+1} Q_{k+1} \label{eq:141}\\
      & + \frac{1}{b_k b_{k+1}^2} \idempotente \label{eq:142}.
    \end{align}
  \end{subequations}
  Again, we consider some pieces: First of all we have
  \begin{equation}
    \label{eq:134}
    \begin{aligned}
      \eqref{eq:135} & = -\frac{Q_{k+1}}{\sqrt{b_k}} s_{k+1}  s_k s_{k+1} \frac{b_{k+1} Q_{k+1}}{\sqrt{b_k}} - \frac{Q_{k+1}}{\sqrt{b_k}} s_{k+1}  s_k  \frac{Q_{k+1}}{\sqrt{b_k}}.
    \end{aligned}
  \end{equation}
  Moreover
  \begin{equation*}
    \begin{aligned}
      \eqref{eq:137} & = \frac{Q_{k+1}}{b_k} b_{k+1} s_{k+1}\frac{1}{b_{k+1}} s_{k+1} Q_{k+1} + \frac{Q_{k+1}}{b_k b_{k+1}} s_{k+1} Q_{k+1}\\
      & = \frac{Q_{k+1} b_{k+1}}{b_k b_{k+2}} Q_{k+1} + \frac{Q_{k+1} }{b_k b_{k+2}} s_{k+1} Q_{k+1}+ \frac{Q_{k+1}}{b_k b_{k+1}} s_{k+1} Q_{k+1}\\
      & = \frac{Q_{k+1} b_{k+1}}{b_k b_{k+2}} Q_{k+1} + \frac{Q_{k+1} }{b_k } s_{k+1}\frac{ Q_{k+1}}{b_{k+1}} - \frac{Q_{k+1}^2}{b_k b_{k+1} b_{k+2}}+ \frac{Q_{k+1}}{b_k b_{k+1}} s_{k+1} Q_{k+1}\\
    \end{aligned}
  \end{equation*}
  and hence
  \begin{equation}
    \label{eq:144}
      \eqref{eq:137} + \eqref{eq:138} + \eqref{eq:141} + \eqref{eq:142} = \frac{1}{b_k} \idempotente.
  \end{equation}
  So we obtain
  \begin{multline*}
      \sigma_k \sigma_{k+1} \sigma_k - \sigma_{k+1} \sigma_k
      \sigma_{k+1} = \eqref{eq:133} - \eqref{eq:134} +
      \eqref{eq:123} + \eqref{eq:124} -
      \eqref{eq:136} - \eqref{eq:139}\\
       = - \frac{1}{b_k} \frac{Q_{k+1}}{\sqrt{b_k}} s_{k+1} s_k Q_k \sqrt{b_{k+2}} + \frac{Q_{k+1}}{\sqrt{b_k}} s_{k+1} s_k \frac{Q_{k+1}}{\sqrt{b_k}} +  \frac{Q_{k+1}}{b_k} s_{k+1} \frac{Q_{k+1}}{b_k} = 0. \qedhere
  \end{multline*}
\end{proof}

\begin{lemma}
  \label{lem:35}
  The braid relation \(\thats_k \thats_{k+1} \ts_k \idem_\bolda= \ts_{k+1} \thats_k \thats_{k+1} \idem_\bolda\) holds for all \(\bolda\) with \(a_k=a_{k+1}\neq a_{k+2}\).
\end{lemma}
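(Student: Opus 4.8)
The plan is to imitate the proof of Lemma~\ref{lem:24}, the only new bookkeeping being that a hatted crossing changes the orientation label and, when commuted past a factor $b_j$, produces an $\hat e$-term. Since the hypothesis forces $a_k=a_{k+1}\neq a_{k+2}$, we may assume $(a_k,a_{k+1},a_{k+2})=({\up},{\up},{\down})$, the case $({\down},{\down},{\up})$ being symmetric. First one records the orientation flow: on the left-hand side $\sigma_k$ fixes $\bolda$, $\hat\sigma_{k+1}$ then sends $\idem_\bolda$ to $\idem_{\sfs_{k+1}\bolda}$ (block $({\up},{\down},{\up})$), and $\hat\sigma_k$ sends this to $\idem_{\sfs_k\sfs_{k+1}\bolda}$ (block $({\down},{\up},{\up})$); on the right-hand side $\hat\sigma_{k+1}$ and then $\hat\sigma_k$ reach the same target orientation, on which $\sigma_{k+1}$ is then defined, so both sides are morphisms $\idem_\bolda\,\uVB_{r,t}^\cycl\,\idempotente\to\idem_{\sfs_k\sfs_{k+1}\bolda}\,\uVB_{r,t}^\cycl\,\idempotente$ and may be compared. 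Before computing, I would pass to a more symmetric form: since $\hat\sigma_j$ carries $\idempotente$ at both ends and $\hat\sigma_j\hat\sigma_j\idem_{\bolda'}=\idempotente\idem_{\bolda'}$ (Lemma~\ref{lem:48}) whenever the relevant two entries of $\bolda'$ differ, left-composing with $\hat\sigma_k$ and pre-composing with $\hat\sigma_{k+1}$ shows, exactly as in the proof of Lemma~\ref{lem:52}, that the claim is equivalent to the ``palindromic'' identity $\hat\sigma_{k+1}\,\sigma_k\,\hat\sigma_{k+1}\,\idem_{\bolda'}=\hat\sigma_k\,\sigma_{k+1}\,\hat\sigma_k\,\idem_{\bolda'}$ for all $\bolda'$ with $a'_k=a'_{k+2}\neq a'_{k+1}$. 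Both sides are then endomorphisms of $\idem_{\bolda'}\,\uVB_{r,t}^\cycl\,\idempotente$, and the factors $Q_kQ_{k+1}=Q_{k+1}Q_k=\sqrt{b_{k+2}/b_k}\,\idempotente$ partially telescope, reducing the number of square-root factors to track.

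\textbf{The computation.} I would substitute the formulas of Definition~\ref{def:3}, expand both triple products into sums of monomials, and split/commute the $b_j$-factors (hence the $Q_j$ and their square roots) past $s_k,\hat s_k,\hat s_{k+1}$ using Lemma~\ref{lem:19} and Lemma~\ref{lem:45}, exactly in the style of the proofs of Lemmas~\ref{lem:47} and \ref{lem:21}; every time a $b_j$ crosses a hatted crossing an $\hat e_k$- or $\hat e_{k+1}$-term is spawned. The resulting $\hat e$-monomials are reduced using the relations of the degenerate affine walled Brauer algebra in the appropriate orientations: the braid relations \ref{item:br:3}--\ref{item:br:4} (in particular $s_ks_{k+1}s_k=s_{k+1}s_ks_{k+1}$ and $\hat s_k\hat s_{k+1}s_k=s_{k+1}\hat s_k\hat s_{k+1}$, as in Lemma~\ref{lem:52}) and the mixed relations \ref{item:br:9}, \ref{item:br:13}, \ref{item:br:14} and \ref{item:br:16}; some products vanish outright for orientation reasons (e.g.\ $e_k\hat e_{k+1}e_k=0$), and the remaining ones collapse via \ref{item:br:14}/\ref{item:br:16} together with the identities of Lemmas~\ref{lem:30} and \ref{lem:48} (and the intermediate computations in their proofs, such as the reduction $\idempotente\hat s_k\idempotente\frac{b_{k+1}}{b_k}\hat e_k\idempotente=-\idempotente e_k\idempotente$). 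Where an idempotent must be inserted or deleted in the interior of a word such as $\idempotente\hat s_k\idempotente\hat s_{k+1}\idempotente$ or $\idempotente\hat s_k\idempotente\hat e_k\idempotente$, I would use Lemma~\ref{lem:49} and the eigenvalue bookkeeping of Lemmas~\ref{lem:1}, \ref{lem:2} and \ref{lem:3}, precisely as in the omitted-idempotent justifications inside the proofs of Lemmas~\ref{lem:33} and \ref{lem:34}. Collecting terms, the bulk of the two expansions match monomial by monomial, and the discrepancy reduces to a single cancellation of the same shape as the final display in the proof of Lemma~\ref{lem:24}.

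\textbf{Main obstacle.} The difficulty is entirely organizational rather than conceptual: compared with Lemma~\ref{lem:24}, each hatted crossing both permutes the orientation label and, on being commuted past a $b_j$, creates an extra $\hat e$-term, so the number of monomials to handle roughly doubles, and at each step one must keep straight which of $\beta_1^{\up},\beta_1^{\down}$ sits inside each $b_j$ on each of the several orientations in play. No new identity about inverses or square roots is needed — all the hard input ($e_k\tfrac1{b_k}e_k\idempotente_{k-1}=e_k\idempotente_{k-1}$, $\hat\sigma_j^2=\idempotente$, the $\hat s$-braid relation, the idempotent-omission principles) has been established in earlier lemmas — so with a careful normalization of the $Q$'s (as in Lemma~\ref{lem:47}) the verification is routine, just long.
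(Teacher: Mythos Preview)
Your sketch would eventually succeed, but it takes a far more laborious route than the paper and, more importantly, is based on a mistaken diagnosis of where the difficulty lies. You write that ``every time a \(b_j\) crosses a hatted crossing an \(\hat e_k\)- or \(\hat e_{k+1}\)-term is spawned'' and therefore expect roughly twice as many monomials as in Lemma~\ref{lem:24}. In fact the opposite happens: this lemma is \emph{much shorter} than Lemma~\ref{lem:24}, because \(\hat\sigma_j\) carries no \(\tfrac{1}{b_j}\) correction term and because, in the orientation \(a_k=a_{k+1}\neq a_{k+2}\), the only commutations needed are the ``safe'' ones \(b_{k+2}\leftrightarrow \hat s_k\) and \(b_k\leftrightarrow \hat s_{k+1}\), which produce \emph{no} \(\hat e\)-terms (only \(b_k,b_{k+1}\) fail to commute with \(\hat s_k\), cf.\ \eqref{eq:173}).

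The paper works directly with the stated form (no passage to the palindromic version). After substituting Definition~\ref{def:3} and telescoping \(Q_kQ_{k+1}=\sqrt{b_{k+2}/b_k}\,\idempotente\), one pulls \(\sqrt{b_{k+2}}\) left past \(\hat s_k\) and \(\tfrac{1}{\sqrt{b_k}}\) right past \(\hat s_{k+1}\) (both free), obtaining
\[
\thats_k\thats_{k+1}\ts_k
= -\,Q_k\sqrt{b_{k+2}}\,\hat s_k\hat s_{k+1}\Bigl(\tfrac{1}{b_k}s_k-\tfrac{1}{b_kb_{k+1}}\Bigr)Q_k\sqrt{b_{k+2}}
= -\,Q_k\sqrt{b_{k+2}}\,\hat s_k\hat s_{k+1}s_k\,\tfrac{1}{b_{k+1}}Q_k\sqrt{b_{k+2}},
\]
the second equality by \eqref{eq:29}. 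The right-hand side collapses the same way to \(-Q_k\sqrt{b_{k+2}}\,s_{k+1}\hat s_k\hat s_{k+1}\,Q_{k+1}/\sqrt{b_k}\), and a one-line check shows the scalar factors agree. So the whole lemma reduces to the single braid relation \(\hat s_k\hat s_{k+1}s_k=s_{k+1}\hat s_k\hat s_{k+1}\) in \(\VB_{r,t}^\cycl\); no \(\hat e\)-bookkeeping, no appeal to Lemmas~\ref{lem:30} or \ref{lem:48}, is required. The only nontrivial preliminary is the idempotent-omission \(\idempotente\hat s_k\idempotente\hat s_{k+1}\idempotente=\idempotente\hat s_k\hat s_{k+1}\idempotente\), justified via Lemma~\ref{lem:2} exactly as you suggest. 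Your detour through the palindromic form and the anticipated \(\hat e\)-cancellations is unnecessary here.
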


\begin{proof}
We omit the idempotent \(\idem_\bolda\) in the following formulas (every summand should be multiplied by \(\idem_\bolda\) from the right). First we observe that \(\idempotente \hat s_k \idempotente \hat s_{k+1} \idempotente = \idempotente \hat s_k \hat s_{k+1} \idempotente\). Indeed, since \(y_{k+2}\) commutes with \(s_k\) and \(y_{k}\) commutes with \(s_{k+1}\), both the generalized eigenvalues of \(y_k\) and \(y_{k+2}\) in the middle have to be small. By Lemma~\ref{lem:2}, the generalized eigenvalue of \(y_{k+1}\) in the middle has also to be small, hence we can omit the idempotent \(\idempotente\) in the middle.

Now, we have
  \begin{equation}
    \label{eq:192}
    \begin{aligned}[t]
      \thats_k \thats_{k+1} \ts_k & = - Q_k \hats_k \sqrt\frac{b_{k+2}}{b_{k}} \hats_{k+1} \sqrt\frac{b_{k+2}}{b_{k+1}} \left(\sqrt\frac{b_{k+1}}{b_k} s_k Q_k - \frac{1}{b_k}\idempotente \right)\\
     & = - Q_k \sqrt{b_{k+2}} \hats_k  \hats_{k+1} \left( \frac{1}{b_k} s_k - \frac{1}{b_k b_{k+1}}\right) Q_k \sqrt{b_{k+2}}\\
     & = - Q_k \sqrt{b_{k+2}} \hats_k  \hats_{k+1} s_k \frac{1}{b_{k+1}} Q_k \sqrt{b_{k+2}}\\
    \end{aligned}
  \end{equation}
  and
  \begin{equation}
    \label{eq:193}
    \begin{aligned}[t]
      \ts_{k+1} \thats_k \thats_{k+1} &= \left(- Q_{k+1} s_{k+1} \sqrt\frac{b_{k+2}}{b_{k+1}} + \frac{1}{b_{k+1}} \right) \sqrt\frac{b_{k+1}}{b_k} \hats_k \sqrt\frac{b_{k+2}}{b_{k}} \hats_{k+1} Q_{k+1}\\
     & = Q_k \sqrt{b_{k+2}}\left(- \frac{1}{b_{k+1}} s_{k+1} b_{k+2}  + \frac{1}{b_{k+1}} \right) \hats_k \hats_{k+1} \frac{Q_{k+1}}{\sqrt{b_{k}}}\\
     & = -Q_k \sqrt{b_{k+2}} s_{k+1}  \hats_k \hats_{k+1} \frac{Q_{k+1}}{\sqrt{b_{k}}}.\\
    \end{aligned}
  \end{equation}
  The two terms \eqref{eq:192} and \eqref{eq:193} are the same because \(\hats_k \hats_{k+1} s_k=s_{k+1} \hats_k \hats_{k+1}\).
\end{proof}


\providecommand{\bysame}{\leavevmode\hbox to3em{\hrulefill}\thinspace}
\providecommand{\MR}{\relax\ifhmode\unskip\space\fi MR }
\providecommand{\MRhref}[2]{%
  \href{http://www.ams.org/mathscinet-getitem?mr=#1}{#2}
}
\providecommand{\href}[2]{#2}



\end{document}